\numberwithin{equation}{section}
\newtheorem{thm}{Theorem}[section]
\newtheorem{corollary}[thm]{Corollary}
\newtheorem{lemma}[thm]{Lemma}
\newtheorem{proposition}[thm]{Proposition}
\theoremstyle{remark}
\theoremstyle{definition}
\newtheorem{definition}[thm]{Definition}
\newcommand{\rr}{{\mathbf{R}^2}}
\newcommand{\dx}{{\; dx}}
\newcommand{\ds}{{\; ds}}
\newcommand{\e}{{\mathbf{e}}}
\newcommand{\bE}{{\mathbf{E}}}
\newcommand{\Ec}{{E_\mathrm{ crit}}}
\newcommand{\ES}{\dot{\mathcal{H}}^1}
\newcommand{\Sn}{{\mathcal{E}}}
\newcommand{\A}{{\mathbf A}}
\newcommand{\B}{{\mathbf B}}
\newcommand{\C}{{\mathbf C}}
\newcommand{\bm}{{\mathbf{m}}}
\newcommand{\tA}{{\widetilde{\mathbf A}}}
\newcommand{\tB}{{\widetilde{\mathbf B}}}
\newcommand{\tC}{{\widetilde{\mathbf C}}}
\newcommand{\T}{{\mathcal T}}
\newcommand{\cM}{{\mathcal M}}
\newcommand{\cR}{{\mathcal R}}
\newcommand{\cL}{{\mathcal L}}
\newcommand{\cS}{{\mathcal S}}
\newcommand{\pd}{{\phi^* \nabla}}
\newcommand{\ph}{{\phi^* h}}
\newcommand{\nline}{{\vspace{\baselineskip}}}
\title[Geometric Renormalization]{Geometric Renormalization Below the Ground State}
\author{Paul Smith}
\date{}
\address{University of California, Berkeley}
\email{smith@math.berkeley.edu}
\begin{document}

\begin{abstract}
The caloric gauge was introduced in \cite{Trenorm} by Tao with studying 
large data energy-critical wave maps
mapping from $\mathbf{R}^{2+1}$ to hyperbolic space 
$\mathbf{H}^m$ in view.
In \cite{BeIoKeTa11} Bejenaru, Ionescu, Kenig, and Tataru
adapted the caloric gauge to the setting of Schr\"odinger maps from $\mathbf{R}^{d + 1}$ to
the standard sphere $S^2 \hookrightarrow \mathbf{R}^3$
with initial data small in the critical Sobolev norm.  
Here we develop the caloric gauge in a bounded geometry
setting with a construction valid
up to the ground state energy.
\end{abstract}

\maketitle

\tableofcontents

\section{Introduction} \label{S:Introduction}
Certain geometric PDE, such as wave maps and Schr\"odinger maps, enjoy
the property of \emph{gauge invariance} at the level of the tangent bundle.
The particular gauge selection one makes plays a crucial role in the analysis
of the PDE in question.  The Coulomb gauge is a well-known, classical
choice well-suited for many problems.  
However, when analyzing
energy-critical wave maps and Schr\"odinger maps, for instance, one finds
that the Coulomb gauge leaves something to be desired.
Tao proposed an alternative (see \cite{Trenorm}), the caloric gauge, which, in particular,
handles
$\mathrm{high} \times \mathrm{high} \to \mathrm{low}$ frequency
interactions much more favorably than does the Coulomb gauge.
Up until now,
the caloric gauge was available only in very restricted settings,
i.e., for target manifolds of constant negative curvature or for
initial data that is suitably small in an appropriate sense.
It is the purpose of this paper to provide a construction of the caloric gauge
valid in a very general setting.

The key tool behind the caloric gauge construction is yet another geometric PDE:
the harmonic map heat flow.  
Harmonic maps and their heat flows have
been intensely studied, and their literature is extensive.
The pioneering work on harmonic map heat flow is due to Eells and Sampson \cite{EeSa64}.
Our work leans most heavily upon the original results of Struwe \cite{St85}, though
we have striven to make this paper largely self-contained in its presentation.
One major exception to this is found in \S \ref{S:Minimal blowup solutions}, where
we cite a major result of \cite{St85}.  The other major exception 
is found in Appendix \ref{Appendix Noncompact}, where
we cite a result of Li and Tam \cite{LiTa91} in order to justify moving from
a compact setting to a noncompact setting.
The contributions this paper makes to the theory of the harmonic
map heat flow are of a technical and analytic nature.
While no new \emph{qualitative} results are shown, several
hard \emph{quantitative} estimates established do appear to be new;
these quantitative estimates play an important role both in the caloric
gauge construction itself and in establishing the appealing quantitative estimates
that the caloric gauge enjoys.

We mention here some of the developments in other directions 
of the theory of harmonic maps and 
harmonic map heat flow.
Results establishing in certain settings the nonexistence of 
smooth nontrivial harmonic maps
are found in Eells-Wood \cite{EeWo76} and  Lemaire \cite{Le78}.
Regularity results originate in Morrey \cite{Mo66}.  See Schoen-Uhlenbeck \cite{ScUh82}
for a compactness property and boundary regularity.
Uniform estimates on derivatives of stationary harmonic maps are derived in
\cite{Li99}.
See Struwe \cite{St85} for global weak solutions and bubbling,
and Topping \cite{To97, To02, To04a, To04b, To04c, To04d}, 
for instance, for sophisticated extensions and refinements along these lines.
For global existence of weak heat flows, see
Chen \cite{Ch89}, Chen-Struwe \cite{ChSt89}.
Recent global heat flow results  for rough initial data
are due to Koch-Lamm \cite{KoLa09}.
See Lin-Wang \cite{LiWa99} for work on convergence of sequences of heat flows.
For the consideration of noncompact target manifolds, see Mitteau \cite{Mi74}, 
Li-Tam \cite{LiTa91}, and the references therein.
The book \cite{LiWa08} by Lin-Wang provides an extensive (though not exhaustive) survey.

Let $\cM$ denote an $m$-dimensional Riemannian manifold with metric $h$
and let $\mathbf{R}^+ := [0, +\infty)$.
A function $\phi: \mathbf{R}^+ \times \rr \to \cM$
satisfying
\begin{equation}
\partial_s \phi(s, x) = (\phi^* \nabla)_j \partial_j \phi(s, x),
\label{heatflowequation}
\end{equation}
where
$\phi^* \nabla$ is the pullback by $\phi$ of the Riemannian connection $\nabla$ on $\cM$,
we call a \emph{harmonic map heat flow}.
Here we parametrize $\mathbf{R}^+ \times \rr$ by $(s, x_1, x_2)$,
interpreting $s$ as a time variable
and implicitly summing
repeated Roman indices over the spatial variables so that $j$ ranges over $1, 2$.  
We usually abbreviate ``harmonic map heat flow'' to just \emph{heat flow}. 
A map defined and satisfying (\ref{heatflowequation}) on
a time subinterval $I$ of $\mathbf{R}^+$ we also call a heat flow,
though sometimes we instead refer to such maps as \emph{partial heat flows},
particularly when we wish to emphasize that a priori such maps do not admit
suitably defined \emph{extensions} from $I$ to all of $\mathbf{R}^+$.

We note that (\ref{heatflowequation}) is invariant with respect to the following
translational and dilational symmetries:
\begin{align}
\phi(s, x) &\mapsto \phi(s, x - x_0) &x_0 \in \rr, \label{trans sym} \\
\phi(s, x) &\mapsto \phi(\frac{s}{\lambda^2}, \frac{x}{\lambda}) &\lambda > 0. \label{dil sym}
\end{align}

We assume throughout that $\cM$ is a smooth manifold and 
has \emph{bounded geometry}, 
meaning that 
the injectivity radius of $\cM$ is bounded
from below and that 
the associated \emph{Riemannian curvature tensor} 
$\cR$ and its covariant derivatives are bounded uniformly on $\cM$
(e.g., see \cite[Chapter 7]{Tr92} for discussion and additional references).  

While the vast majority of our arguments are intrinsic, we shall appeal to extrinsic
arguments in order to establish local existence.  To avoid distracting technicalities,
we therefore make the additional working assumption that $\cM$ is compact
and without boundary (i.e., a closed manifold).  We make implicit use of compactness
in \S \ref{S:Minimal blowup solutions}.
In Appendix \ref{Appendix Noncompact} we show how to remove
the technical compactness assumption and include
a list of all places where we had made use of it;
one motivation for doing so is to capture within
our framework the case where $\cM$ is a hyperbolic space $\mathbf{H}^m$.
As the additional arguments we present
essentially reduce the more general setting to the compact case,
we therefore have a unified approach for constructing the caloric gauge,
valid for all bounded geometry manifolds $\cM$.

By the Nash Embedding
Theorem, $\cM$ admits a smooth isometric embedding $\iota: \cM \hookrightarrow \mathbf{R}^n$
into a Euclidean space of (typically higher) dimension $n$.
Compactness ensures boundedness of the \emph{second fundamental form}
and closely related \emph{shape operator}, which, informally speaking,
describe how serpentine the embedding is.
The extrinsic formulation
of (\ref{heatflowequation}) that $\iota$ gives rise to and that we discuss below
is used to establish local existence and uniqueness for (\ref{heatflowequation}) and to
derive a blowup criterion.  
All other arguments and in fact the blowup criterion itself
admit intrinsic formulations.  Consequently, all of our main conclusions are independent
of the particular value of $n$ or choice of embedding $\iota$.

Let $T \cM$ denote the tangent bundle of $\cM$ and, using the embedding $\iota$, define
$N \cM$ to be its normal bundle in $\mathbf{R}^n$.  
The second fundamental form $\Pi$ of $\cM$ is a
symmetric bilinear form $\Pi : T \cM \times T \cM \to N \cM$ satisfying
\begin{equation*}
\langle \Pi(X, Y), N \rangle = \langle \partial_X N, Y \rangle
\end{equation*}
for vector fields $X, Y$ in $T\cM$ and $N$ in $N \cM$, according to the sign
convention we adopt.
The heat flow equation
(\ref{heatflowequation}) then admits the extrinsic formulation
\begin{equation}
\partial_s \phi = \Delta \phi + \Pi(\phi)(\partial_x \phi, \partial_x \phi),
\label{heatflow ex}
\end{equation}
where here we have identified $\iota \circ \phi$ with $\phi$ just as we shall
do in the sequel when convenient and without further comment.

We assume we are given Cauchy initial data to (\ref{heatflowequation}) 
so that $\phi$ is specified at an initial time $s_0$, which we usually
take to be zero:
\begin{equation}
\phi (0, x) = \phi_0(x).  
\label{initialdata}
\end{equation}
Throughout we require 
that $\phi_0$ be \emph{classical initial data}, 
by which we mean that 
$\iota \circ \phi_0$ of
$\phi_0 \in C_x^\infty(\rr \to \cM)$ differs from a constant 
$\phi_0(\infty) \in \cM \hookrightarrow_\iota \mathbf{R}^n$ by some Schwartz function
$\mathbf{R}^2 \to \mathbf{R}^n$.  That this definition is independent of the choice of
embedding $\iota$  can be checked by examining $\tilde{\iota} \circ \iota^{-1} \circ
(\iota \circ \phi_0)$ for some other choice of smooth isometric embedding $\tilde{\iota}$.
A smooth function $\phi:\rr \to \cM$ differing from some $\phi(\infty) \in \cM$
by a Schwartz function we also call \emph{Schwartz with respect to $\phi(\infty)$.}
Schwartz functions admit an intrinsic definition (entirely equivalent in the case of
compact $\cM$) 
in terms of a base point $p \in \cM$ and the connection $\nabla$, 
though we postpone further discussion of this to
Appendix \ref{Appendix Noncompact}.

Under these assumptions there exists at least for short time
a unique smooth heat flow $\phi$ that satisfies (\ref{heatflowequation}) 
and (\ref{initialdata}) in the classical sense.
In fact, global existence and uniqueness hold in this sense (see \cite{St85}) provided
the initial data $\phi_0$ has energy
\begin{equation*}
E_0 := E(\phi_0) := \frac{1}{2} \int_\rr \lvert \partial_x \phi_0 \rvert_\ph^2 dx
\label{E1 Def}
\end{equation*}
below some $\cM$-dependent threshold which we call $\Ec$.
In particular, $\Ec$ is equal to the least energy that may be carried by a nontrivial finite-energy
harmonic map mapping $\rr \to \cM$ and is set equal to $\infty$ if $\cM$ admits no such maps.
We call this threshold $\Ec$ the \emph{ground state} energy.
That such a threshold exists (or can be taken to be $\infty$)
follows easily from a certain a priori gradient estimate;
see \cite[Chapter 6]{LiWa08} for details.
Note that stationary solutions to (\ref{heatflowequation}) may be identified
with harmonic maps. 
The aptly-named harmonic map heat flow equation
(\ref{heatflowequation}) is the downward $L^2$ gradient
flow associated with the energy functional
\begin{equation}
\phi \mapsto \frac{1}{2} \int_\rr \lvert \partial_x \phi \rvert^2 \dx
\label{E fun}
\end{equation}
and therefore heuristically may be thought to
asymptotically evolve finite energy maps 
into solutions of the
harmonic map equation
\begin{equation*}
(\phi^* \nabla)_j \partial_j \phi(x) = 0.
\end{equation*}

In \S \ref{S:Notation} we review some notation and basic estimates.
The technical heart 
lies in establishing for various energy quantities
associated to
$\phi$ bounds depending only upon the initial energy $E_0$.
As the energy functional given by (\ref{E fun}) is invariant with respect
to the scaling (\ref{dil sym}) enjoyed by (\ref{heatflowequation}),
these bounds are both energy-critical and critical with respect to the heat flow equation.
In addition to recalling some constructions from Riemannian geometry,
in \S \ref{S:Geometry} we prove a set of Bochner-Weitzenb\"ock estimates that play a key role in 
establishing energy bounds, which we then proceed to establish 
in a direct manner for small energies in \S \ref{S:Small Energy Estimates}.
In \S \ref{S:Minimal blowup solutions} 
we use an induction on energy argument, formulated in terms of ``minimal blow-up
solutions'', along with concentration compactness to extend the energy estimates
to all energies below the ground state threshold $\Ec$.  
In particular,
we show that should these estimates not hold for all maps $\phi$ with energy
below $\Ec$, then by exploiting the symmetries (\ref{trans sym}) and (\ref{dil sym})
one may extract a nontrivial harmonic map of energy strictly
less than $\Ec$, thereby contradicting the definition of $\Ec$.  The main estimates
used in extracting the harmonic map and proving convergence originate in 
the work of Struwe and are proven intrinsically 
in \S \ref{S:Energy Estimates}; see \cite{St85, St88}.

In \S \ref{S:Caloric gauge} we introduce the notion of a gauge and in particular
the \emph{caloric gauge}.  The caloric gauge was introduced by Tao in \cite{Trenorm} in the context
of wave maps.  In \cite{TSchroedinger} Tao suggested that the caloric gauge could be effectively applied
to the study of Schr\"odinger maps.
The caloric gauge has been successfully used in the study of global
wellposedness for wave maps and
Schr\"odinger maps: See Tao's application to wave maps in
\cite{T3}--\cite{T7} and the application of Bejenaru, Ionescu, Kenig, and Tataru to
Schr\"odinger maps in \cite{BeIoKeTa11}.  

To see how the caloric gauge compares to the Coulomb gauge, see 
\cite{Trenorm}, \cite{TSchroedinger}
and also \cite[\S 2]{BeIoKeTa11}.  For a general discussion on gauges and a comparison
of several different gauges, see \cite[Chapter 6]{Tdis}.

Up until now, in the presence of positive curvature the caloric gauge
has only been available under the restrictive assumption
of small energy.  
\footnote{For maps mapping from a higher dimensional base space, the
appropriate assumption is small critical Sobolev norm.  See for instance
the application to Schr\"odinger maps in \cite{BeIoKeTa11}.
In this paper we only consider two spatial dimensions.}
We offer a construction valid up to the ground state
energy $\Ec$.
In particular, our main results are summarized as follows:
\begin{itemize}
\item Theorem \ref{Main Energy Estimates}: Given classical initial data
$\phi_0$ with $E(\phi_0) < \Ec$, 
we establish global existence and uniqueness of a smooth heat flow $\phi$
as well as a host of quantitative energy estimates.
\item Theorem \ref{CalGauge EU}: Given classical initial data
$\phi_0$ with $E(\phi_0) < \Ec$, 
we establish existence and uniqueness (up to a choice of boundary frame) 
of the caloric gauge $e \in \Gamma(\mathrm{ Frame}(\phi^* T \cM))$.
In the remainder of \S \ref{S:Caloric gauge}, we use Theorem \ref{Main Energy Estimates}
to prove quantitative estimates on various gauge components.
\item Theorem \ref{Dynamic CalGauge}: We show that
the static caloric gauge construction of Theorem \ref{CalGauge EU} is easily adapted to the
dynamic setting.
\end{itemize}
For applications of these results to the study
of Schr\"odinger maps, we refer the reader to the work of the author in \cite{Sm11, Sm10}.

\section{Notation and basic estimates} \label{S:Notation}
We use $f \lesssim g$ to denote the estimate $\lvert f \rvert \leq C \lvert g \rvert$ for an absolute constant $C > 0$.  To indicate dependence of the constant upon parameters (which,
for instance, can include functions), we use subscripts,
e.g. $f \lesssim_k g$.  As an equivalent alternative we write $f = O(g)$ (or,
with subscripts, $f = O_k(g)$) to denote $\lvert f \rvert \leq C \lvert g \rvert$.

In addition to the usual $L_x^p$ spaces, we employ the Banach spaces $C_x^k$ of
$k$-times continuously differentiable functions, $k = 0, 1, 2, \ldots$, equipped with the norm
\begin{equation*}
\lVert u \rVert_{C_x^k(\rr)} 
:= 
\sup_{0 \leq j \leq k} \sup_{x \in \rr} \lvert \partial_x^j u(x) \rvert
\end{equation*}
and corresponding seminorm
\begin{equation*}
\lVert u \rVert_{\dot{C}_x^k(\rr)}
:=
\sup_{x \in \rr} \lvert \partial_x^k u(x) \rvert,
\end{equation*}
where we use $\partial_x = ( \partial_{x_1}, \partial_{x_2} )$ to denote the gradient operator,
as throughout ``$\nabla$'' will stand for the Riemannian connection on $\cM$.  On occasion
we use the usual Sobolev spaces $W_x^{s, p}$, $H_x^s := W_x^{s, 2}$, their
homogeneous counterparts $\dot{W}_x^{s, p}$, $\dot{H}_x^s$, and
their localized versions $W_{x, \mathrm{ loc}}^{s, p}$, $H_{x, \mathrm{ loc}}^s$.

We record the following special cases of the well-known \emph{Gagliardo-Nirenberg inequality}
for later use:
\begin{align}
\lVert u \rVert_{L_x^\infty(\rr)}
&\lesssim
\lVert u \rVert_{L_x^2(\rr)}^{1/2} \lVert \partial_x^2 u \rVert_{L_x^2(\rr)}^{1/2}
\label{GN2} \\
\lVert u \rVert_{L_x^\infty(\rr)}
&\lesssim
\lVert u \rVert_{L_x^2(\rr)}^{1/3} \lVert \partial_x u \rVert_{L_x^4(\rr)}^{2/3}
\label{GN4} \\
\lVert u \rVert_{L_x^4(\rr)}
&\lesssim
\lVert u \rVert_{L_x^2(\rr)}^{1/2} \lVert \partial_x u \rVert_{L_x^2(\rr)}^{1/2}
\label{GN5}.
\end{align}

Throughout $\Delta := \partial_{x_1}^2 + \partial_{x_2}^2$ stands for the usual
spatial Laplacian on $\rr$.
The heat kernel is defined by
\begin{equation}
e^{s \Delta}u(x) := \frac{1}{4 \pi s} \int_{\rr} e^{-\frac{|x - y|^2}{4s}} u(y) dy.
\label{heat kernel}
\end{equation}
By Young's inequality the parabolic regularity estimate
\begin{equation}
\lVert \partial_x^k e^{s\Delta} u \rVert_{L_x^q(\rr)} \lesssim_{p, q, k}
s^{\frac{1}{q} - \frac{1}{p} - \frac{k}{2}} \lVert u \rVert_{L_x^p(\rr)}
\label{Young}
\end{equation}
holds for all $s > 0$, $k \geq 0$, and $1 \leq p \leq q \leq \infty$.
In particular,
\begin{equation}
\lVert e^{s \Delta} u \rVert_{\dot{C}_x^1(\rr)}
\lesssim
s^{-1/2} \lVert u \rVert_{C_x^0(\rr)}
\end{equation}
and
\begin{equation}
\lVert e^{s \Delta} u \rVert_{C_x^1(\rr)}
\lesssim
(1 + s^{-1/2}) \lVert u \rVert_{C_x^0(\rr)}.
\label{Young2}
\end{equation}
We shall frequently make use of the following more refined variant of (\ref{Young}):
\begin{lemma}(Integrated parabolic regularity).
For any $u \in L_x^2(\rr)$ and $2 < p \leq \infty$ we have
\begin{equation}
\int_0^\infty s^{-2/p} \lVert e^{s\Delta}u \rVert_{L_x^p(\rr)}^2 ds \lesssim_p \lVert u \rVert_{L_x^2(\rr)}^2.
\label{Strichartz}
\end{equation}
\end{lemma}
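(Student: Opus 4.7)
My plan is to establish (\ref{Strichartz}) by a $TT^*$-type duality argument, treating it as a Strichartz-type bound for the heat semigroup. The estimate asserts boundedness of the linear map $T: L_x^2(\rr) \to L^2((0,\infty), s^{-2/p}ds; L_x^p(\rr))$ defined by $Tu(s) := e^{s\Delta}u$. By duality, this is equivalent to the bound
\[
\Bigl\lVert \int_0^\infty e^{s\Delta} F(s)\, s^{-2/p}\, ds \Bigr\rVert_{L_x^2}^2 \lesssim_p \int_0^\infty s^{-2/p}\lVert F(s)\rVert_{L_x^{p'}}^2\, ds
\]
for the dual exponent $p' = p/(p-1)$.

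I expand the left-hand side using the semigroup law $e^{s\Delta}e^{t\Delta} = e^{(s+t)\Delta}$ and $L^2$-self-adjointness of the heat semigroup, obtaining a double integral of $\langle e^{(s+t)\Delta}F(s),F(t)\rangle_{L^2_x}$ against $s^{-2/p}t^{-2/p}$. Applying H\"older in $x$ together with the dispersive bound $\lVert e^{\tau\Delta}v\rVert_{L^p_x} \lesssim \tau^{1/p - 1/p'}\lVert v\rVert_{L^{p'}_x} = \tau^{2/p - 1}\lVert v\rVert_{L^{p'}_x}$ (a special case of (\ref{Young}) in dimension $2$) reduces matters to the scalar integral inequality
\[
\int_0^\infty\!\!\int_0^\infty K(s,t)\,g(s)\,g(t)\,ds\,dt \lesssim_p \lVert g\rVert_{L^2(0,\infty)}^2,\qquad K(s,t) := \frac{(s+t)^{2/p-1}}{(st)^{1/p}},
\]
where $g(s) := s^{-1/p}\lVert F(s)\rVert_{L^{p'}_x}$, so that $\lVert g\rVert_{L^2(ds)}^2$ coincides with the right-hand side of the target estimate.

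The kernel $K$ is symmetric, positive, and positively homogeneous of degree $-1$. Under the logarithmic change of variables $s = e^x$, $g(s) = e^{-x/2}G(x)$, the associated integral operator on $L^2((0,\infty),ds)$ is unitarily equivalent to convolution on $L^2(\mathbf{R},dx)$ by $k(u) := K(1,e^{-u})e^{-u/2}$. Boundedness of this convolution is guaranteed by $\lVert k\rVert_{L^1(\mathbf{R})}<\infty$, which in the original variable reads
\[
\int_0^\infty K(1,\tau)\tau^{-1/2}\,d\tau = \int_0^\infty \frac{(1+\tau)^{2/p - 1}}{\tau^{1/p+1/2}}\,d\tau <\infty.
\]
Both the $\tau\to 0$ and $\tau\to\infty$ tails of this integral require precisely $1/p < 1/2$, i.e. $p > 2$, which matches the hypothesis.

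The only real obstacle is the bookkeeping that forces the two tail conditions to collapse to the same threshold $p>2$; conceptually the argument is the familiar Strichartz construction, fed by the dispersive $L^{p'}\to L^p$ bound for $e^{s\Delta}$ in two dimensions and by scale invariance of the resulting scalar kernel. No further input beyond (\ref{Young}) and a Schur-type analysis of a homogeneous-degree-$(-1)$ kernel is required.
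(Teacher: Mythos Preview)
Your proof is correct and follows precisely the $TT^*$ approach that the paper invokes (the paper itself merely cites the method and refers to \cite[Lemma 2.5]{T4} for details). Your reduction to the homogeneous-of-degree-$(-1)$ kernel and the Schur/convolution analysis of its $L^2$-boundedness is the standard way this $TT^*$ computation is carried out.
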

\begin{proof}
The lemma follows from the $TT^*$ method.  See \cite[Lemma 2.5]{T4} for details.
\end{proof}
Because (\ref{Strichartz}) is analogous to certain Strichartz estimates, 
standard in the study of dispersive PDE, 
what we call ``integrated parabolic regularity" \cite{T4} dubs a 
``parabolic Strichartz estimate".

We also note \emph{Duhamel's formula}
\begin{equation}
u(s_1) = e^{(s_1 - s_0)\Delta}u(s_0) +
\int_{s_0}^{s_1} e^{(s_1 - s)\Delta}(\partial_s u - \Delta u)(s) \ds,
\label{Duhamel}
\end{equation}
valid for any continuous map $s \mapsto u(s)$ from the interval $[s_0, s_1]$ to the space of tempered distributions on $\rr$, which we may take to be either scalar valued or vector valued.

\section{Geometry} \label{S:Geometry}

Recall $\cM$ is an $m$-dimensional Riemannian manifold with metric $h$.  There then exists a 
unique \emph{Riemannian connection} $\nabla$ on $\cM$ satisfying the following 
four characterizing properties.  
Given any smooth vector fields $X, Y, Z$ on $\cM$ and smooth real-valued function $f$ on $\cM$,
the Riemannian connection $\nabla$ is \emph{linear}
\begin{equation}
\nabla_{fX}Y = f \nabla_X Y,
\end{equation}
satisfies the \emph{Leibniz rule}
\begin{equation}
\nabla_X(f Y) = (\partial_X f) Y + f \nabla_X Y,
\label{Leibniz}
\end{equation}
\emph{respects the metric}
\begin{equation}
\partial_X h(Y, Z) = h(\nabla_X Y, Z) + h(Y, \nabla_X Z),
\end{equation}
and is
\emph{torsion-free}
\begin{equation}
\nabla_X Y - \nabla_Y X = [X, Y].
\label{NoTorsion}
\end{equation}
Here $[X, Y]$ denotes the Lie bracket of $X$ and $Y$. 
The Riemannian curvature tensor $\cR$ on $\cM$ is a $(1,3)-$tensor defined by
\begin{equation*}
\cR(X,Y)Z = \nabla_X \nabla_Y Z - \nabla_Y \nabla_X Z - \nabla_{[X, Y]}Z.
\end{equation*}

We caution the reader that oftentimes $\cR$ is defined with the sign convention opposite to the one we adopt.  Via the metric $h$ on $\cM$ the tensor $\cR$ is sometimes viewed as a $(0,4)$--tensor.  
The curvature tensor enjoys several other well-known properties, e.g., symmetries, though
we shall not make explicit use of these in this paper.
A good general introduction and reference is \cite[Chapter 2]{Petersen}.

Recall $\cM$ is isometrically embedded by some $\iota$ in $\mathbf{R}^n$.
By definition,
the pullback via $\iota^*$ of the metric on $\iota(\cM)$
inherited from the ambient space $\mathbf{R}^n$ coincides
with the intrinsic metric $h$.  Compactness implies
that the second fundamental form $\Pi$ and its derivatives are
bounded uniformly.

Let $\phi: \mathbf{R}^d \to \cM$ be a smooth map from the vector space $\mathbf{R}^d$ 
to the manifold $\cM$.  We may then pull back the tangent bundle $T\cM$ by $\phi$ to a 
vector bundle $\phi^*(T\cM)$ over $\mathbf{R}^d$.  A smooth section 
$\gamma: \mathbf{R}^d \to \phi^*(T\cM)$ of this vector bundle is a smooth assignment 
of a tangent vector $\gamma(x) \in T_{\phi(x)}\cM$ to each $x \in \mathbf{R}^d$.  
We may also pull back the connection $\nabla$ on $\cM$ to a 
connection $\phi^* \nabla$ on $\mathbf{R}^d$.  
Let $\partial_1, \ldots, \partial_d$ denote the standard coordinate vector fields on $\mathbf{R}^d$ and
let $\gamma, \gamma^\prime$ be smooth sections of $\phi^*(T\cM)$.
Then from (\ref{Leibniz}) we derive the \emph{Leibniz rule}
\begin{equation}
\partial_i \langle \gamma, \gamma^\prime \rangle_{\phi^* h} = 
\langle (\phi^*\nabla)_i \gamma, \gamma^\prime \rangle_{\phi^*h} +
\langle \gamma, (\phi^* \nabla)_i \gamma^\prime \rangle_{\phi^*h},
\label{Leibniz2}
\end{equation}
and from
(\ref{NoTorsion}) the \emph{zero-torsion property}
\begin{equation}
(\phi^*\nabla)_i \partial_j \gamma = (\phi^*\nabla)_j \partial_i \gamma.
\label{NoTorsion2}
\end{equation}
Note that these constructions readily apply to
$I \times \mathbf{R}^d$ in place of $\mathbf{R}^d$ when we take $I$ to be a connected interval.

For each positive integer $k$ define the energy densities $\e_k$
of a map $\phi$ to be
\begin{align*}
\e_k &:= \lvert (\phi^* \nabla)_x^{k-1} \partial_x \phi \rvert^2_{\phi^*h} \nonumber \\
&:= \langle (\phi^* \nabla)_{j_1} \ldots (\phi^* \nabla)_{j_{k-1}} \partial_{j_k} \phi,
(\phi^* \nabla)_{j_1} \ldots (\phi^* \nabla)_{j_{k-1}} \partial_{j_k} \phi \rangle_{\phi^*h},
\end{align*}
where $j_1, \ldots, j_k$ are summed over $1, 2$.

\begin{lemma}(Bochner-Weitzenb\"ock identities).
Let $\phi$ be a heat flow.  Then for each $k \geq 1$ we have
\begin{equation}
\partial_s \e_k = \Delta \e_k - 2 \e_{k+1} + N_k,
\label{LBW}
\end{equation}
where $N_k$ is a linear combination of $O_k(1)$ terms of the form
\begin{equation*}
T(\phi)(X_1, \ldots, X_j, Y),
\end{equation*}
where $T$ is a tensor with bounded covariant derivatives (bounded on $\cM$),
$j \in \{3, 4, \dots, k+2\}$, 
$Y$ is an iterated spatial covariant derivative of $\phi$ of order $k$,
and each $X_\ell$ is an iterated spatial covariant
derivative of $\phi$ of positive order
so that for each $j$ the sum of orders of derivatives (associated to the $X_\ell$)
equals $k + 2$.
Moreover,
$N_k$ satisfies the bound
\begin{equation}
\lvert N_k \rvert
\lesssim_k
\sum_{j = 3}^{k+2} \sum_{a_1 + \cdots + a_j = k + 2}
(\e_{a_1} \cdots \e_{a_j} \e_k)^{1/2}.
\label{Nk bound}
\end{equation}
\end{lemma}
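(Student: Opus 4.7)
The plan is to prove \eqref{LBW} by induction on $k$ at the level of sections. Introduce $A_k := (\phi^*\nabla)_{j_1}\cdots(\phi^*\nabla)_{j_{k-1}}\partial_{j_k}\phi$, so that $\e_k = \langle A_k, A_k\rangle_{\phi^* h}$ after summing over the indices $j_1,\ldots,j_k$. Two applications of the Leibniz rule \eqref{Leibniz2} give
\begin{align*}
\partial_s \e_k &= 2\langle(\phi^*\nabla)_s A_k, A_k\rangle_{\phi^* h}, \\
\Delta \e_k &= 2\langle(\phi^*\nabla)_i(\phi^*\nabla)_i A_k, A_k\rangle_{\phi^* h} + 2\e_{k+1},
\end{align*}
so it suffices to produce a section-valued $M_k$ satisfying $(\phi^*\nabla)_s A_k - (\phi^*\nabla)_i(\phi^*\nabla)_i A_k = M_k$ with components of the advertised tensorial form, and then set $N_k = 2\langle M_k, A_k\rangle_{\phi^* h}$.

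For the base case $k=1$, zero torsion \eqref{NoTorsion2} together with \eqref{heatflowequation} yields $(\phi^*\nabla)_s \partial_{j_1}\phi = (\phi^*\nabla)_{j_1}(\phi^*\nabla)_i\partial_i\phi$, while a second application of \eqref{NoTorsion2} gives $(\phi^*\nabla)_i(\phi^*\nabla)_i\partial_{j_1}\phi = (\phi^*\nabla)_i(\phi^*\nabla)_{j_1}\partial_i\phi$. Their difference is the curvature commutator $\cR(\partial_{j_1}\phi,\partial_i\phi)\partial_i\phi$, so $M_1$ has the claimed structure with $T = \cR$, $j = 3$, and four spatial arguments each of order one.

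The inductive step hinges on the fact that the curvature of the pullback connection is $\phi^*\cR$, i.e.\ $[(\phi^*\nabla)_a,(\phi^*\nabla)_b]\gamma = \cR(\partial_a\phi,\partial_b\phi)\gamma$ on any section $\gamma$. Writing $A_{k+1} = (\phi^*\nabla)_i A_k$, I would commute $(\phi^*\nabla)_s$ past $(\phi^*\nabla)_i$, producing $\cR(\partial_s\phi,\partial_i\phi)A_k$; then commute $(\phi^*\nabla)_i$ past $(\phi^*\nabla)_j(\phi^*\nabla)_j$, producing $\cR(\partial_i\phi,\partial_j\phi)(\phi^*\nabla)_j A_k$ plus $(\phi^*\nabla)_j[\cR(\partial_i\phi,\partial_j\phi)A_k]$, which by the tensorial Leibniz rule expands into one $\nabla\cR$ summand and three further $\cR$ summands. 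Combining with $(\phi^*\nabla)_i M_k$ from the inductive hypothesis and using $\partial_s\phi = (\phi^*\nabla)_l\partial_l\phi$ in the $\cR(\partial_s\phi,\partial_i\phi)A_k$ piece yields $M_{k+1}$. A bookkeeping check then confirms that every summand is of the form $T(\phi)(X_1,\ldots,X_{j-1},A_{k+1})$ with $T \in \{\cR,\nabla\cR,\ldots\}$ bounded, each $X_\ell$ an iterated spatial derivative of positive order, the orders summing to $(k+1)+2 = k+3$, and $j \in \{3,\ldots,k+3\}$.

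The bound \eqref{Nk bound} then follows from pointwise Cauchy-Schwarz: since bounded geometry forces $|T(\phi)|$ uniformly bounded on $\cM$, each summand obeys $|T(\phi)(X_1,\ldots,X_j,Y)| \lesssim_k |X_1|\cdots|X_j||Y| = (\e_{a_1}\cdots\e_{a_j}\e_k)^{1/2}$, with $a_\ell$ the order of $X_\ell$. The main obstacle is not conceptual but combinatorial: one must carefully verify that every commutation either raises the order of an existing $X_\ell$ by one (via the inner $(\phi^*\nabla)_i$) or introduces a fresh order-one $\partial_x\phi$-factor (via the tensor Leibniz rule on the bounded tensor), so that the invariants on $j$ and on $\sum a_\ell$ propagate correctly through the induction.
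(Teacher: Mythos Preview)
Your proof is correct and follows essentially the same approach as the paper: your $M_k$ is precisely the paper's commutator sum $\mathfrak{c}_{s,\bm}^k + \mathfrak{c}_{\bm,j}^k$, and your inductive commutation of $(\phi^*\nabla)_s$ and $(\phi^*\nabla)_j(\phi^*\nabla)_j$ past the outer $(\phi^*\nabla)_i$ reproduces the paper's recursions \eqref{dskrec} and \eqref{dxjrec}. The only cosmetic difference is that the paper splits $M_k$ into the two commutators and tracks their recursions separately before recombining, whereas you handle the sum directly.
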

\begin{proof}
We first prove the $k = 1$ case, which plays a privileged role throughout the sequel.
By the Leibnitz rule (\ref{Leibniz2}) and zero-torsion property (\ref{NoTorsion2}) it holds that
\begin{align}
\partial_s \e_1 &= \partial_s \langle \partial_i \phi, \partial_i \phi
\rangle_{\phi^*h} \nonumber \\
&= 2 \langle (\phi^* \nabla)_s \partial_i \phi, \partial_i \phi
\rangle_{\phi^* h} \nonumber \\
&= 2 \langle (\phi^* \nabla)_i \partial_s \phi, \partial_i \phi
\rangle_{\phi^* h}. \label{privileged case}
\end{align}
Curvature arises in commuting the covariant derivatives so that
\begin{align}
(\pd)_i \partial_s \phi &= (\pd)_i (\pd)_j \partial_j \phi \nonumber \\
&= (\phi^* \nabla)_j (\phi^* \nabla)_i \partial_j \phi 
+ \cR(\phi)(\partial_i \phi, \partial_j \phi) \partial_j \phi \nonumber \\
&= (\phi^* \nabla)_j (\phi^* \nabla)_j \partial_i \phi 
+ \cR(\phi)(\partial_i \phi, \partial_j \phi) \partial_j \phi, \label{se1}
\end{align}
where the last line follows from (\ref{NoTorsion2}).
From (\ref{Leibniz2}) we also obtain
\begin{equation}
\Delta \e_1 = 2 \e_2 + 2
\langle (\phi^* \nabla)_j (\phi^* \nabla)_j \partial_i \phi, \partial_i \phi
\rangle_\ph.
\label{de1}
\end{equation}
It follows from (\ref{privileged case})--(\ref{de1}) that
\begin{equation*}
\partial_s \e_1 = \Delta \e_1 - 2 \e_2
+ \langle \cR(\phi)(\partial_i \phi, \partial_j \phi) \partial_j \phi, \partial_i \phi
\rangle_\ph,
\end{equation*}
which completes the proof of the $k=1$ case in view of the bounded geometry assumption.

For each $k \geq 1$ and each $k$-index 
$\bm = (m_1, \ldots, m_k)$, $m_\ell \in \{1, 2\}$, set
\begin{equation*}
\mathfrak{c}_{s, \bm}^k := (\pd)_s (\pd)_{\bm}^{k} \phi
- (\pd)_{\bm}^k \partial_s \phi
\end{equation*}
and
\begin{equation*}
\mathfrak{c}_{{\bm}, j}^k :=  (\pd)_{\bm}^k (\pd)_j \partial_j \phi - 
(\pd)_j (\pd)_j (\pd)_{\bm}^{k} \phi,
\end{equation*}
where $ (\pd)_{\bm}^{k} \phi$ stands for
\begin{equation*}
(\pd)_{\tilde{\bm}}^{k-1} \partial_{m_k} \phi,
\end{equation*}
with $\tilde{\bm} = (m_1, \ldots, m_{k-1})$ so that $\bm = (\tilde{\bm}, m_k)$.
With an implicit sum over $\{1, 2\}^k$
on repeated multiindices $\bm$,
we have the straightforward relations
\begin{equation}
\partial_s \e_k 
= 2
\langle (\pd)_s (\pd)_{\bm}^{k}\phi,
(\pd)_{\bm}^{k} \phi \rangle_\ph
\end{equation}
and
\begin{equation}
\Delta \e_k = 2 \e_{k+1} + 
2
\langle (\pd)_j (\pd)_j (\pd)_{\bm}^{k} \phi,
(\pd)_{\bm}^{k} \phi \rangle_\ph,
\end{equation}
so that
\begin{equation}
\partial_s \e_k - \Delta \e_k + 2 \e_{k+1} = 
2 \langle \mathfrak{c}_{s, \bm}^k + \mathfrak{c}_{\bm, j}^k, 
(\pd)_{\bm}^{k} \phi \rangle_\ph.
\label{ed}
\end{equation}
Taking now $\tilde{\bm} = (m_2, \ldots, m_k)$ so that $\bm = (m_1, \tilde{\bm})$,
we also have the recursive relations
\begin{equation}
\mathfrak{c}_{s, \bm}^{k} =
\sum_{m_1 = 1,2}
(\pd)_{m_1} \mathfrak{c}_{s, \tilde{\bm}}^{k-1}
- 
\sum_{\bm \in \{1, 2\}^k}
\cR(\phi)(\partial_{m_1} \phi, \partial_s \phi) (\pd)_{\tilde{\bm}}^{k-1} \phi,
\label{dskrec}
\end{equation}
and
\begin{align}
\mathfrak{c}_{\bm, j}^k 
 = \sum_{m_1 = 1,2} (\pd)_{m_1} \mathfrak{c}_{\tilde{\bm}, j}^{k-1} +
\sum_{\bm \in \{1, 2\}^k}& \left[
\cR(\phi)(\partial_{m_1} \phi, \partial_j \phi) (\pd)_j (\pd)_{\tilde{\bm}}^{k-1}
\phi \right. \nonumber \\
&+ \left. (\pd)_j \left( \cR(\phi) (\partial_{m_1} \phi, \partial_j \phi) (\pd)_{\tilde{\bm}}^{k-1}
\phi \right) \right].
\label{dxjrec}
\end{align}
When $k = 1$, 
\begin{equation}
\mathfrak{c}_{s, m_1}^1 = 0
\label{csm1}
\end{equation}
by (\ref{NoTorsion2}).
By the same we also get
\begin{equation}
\mathfrak{c}_{m_1,j}^1 = \cR(\phi)(\partial_{m_1} \phi, \partial_j \phi) \partial_j \phi.
\label{cmj1}
\end{equation}
In view of (\ref{ed}), 
(\ref{csm1}), (\ref{cmj1}), and bounded geometry,
we see that representation (\ref{LBW}) and bound (\ref{Nk bound}) hold
when $k = 1$.
For $k > 1$,  (\ref{LBW}) and (\ref{Nk bound}) follow from induction.  In particular, 
we apply (\ref{dskrec}) and (\ref{dxjrec}) iteratively in
(\ref{ed}),
replacing
$\partial_s \phi$ in (\ref{dskrec}) by $(\phi^* \nabla)_j \partial_j \phi$ courtesy
of (\ref{heatflowequation}).
\end{proof}

We obtain the following inequalities as a corollary.
For convenience we introduce
\begin{equation}
\bE_k(s) := \int_\rr \e_k(s, x) \dx.
\end{equation}
\begin{corollary}
Let $\phi$ be a heat flow.  Then for each $k \geq 1$ we have
\begin{align}
\partial_s \e_k - \Delta \e_k + 2 \e_{k+1} 
&\lesssim_k
\sum_{j = 3}^{k+2} \sum_{a_1 + \cdots + a_j = k + 2}
(\e_{a_1} \cdots \e_{a_j} \e_k)^{1/2}, 
\label{eineq} \\
\partial_s \bE_k + 2 \bE_{k+1}
&\lesssim_k 
\sum_{j = 3}^{k+2} \sum_{a_1 + \cdots + a_j = k + 2}
\int_\rr (\e_{a_1} \cdots \e_{a_j} \e_k)^{1/2} \dx, \label{Eineq} \\
\bE_k(s) -\bE_k(r) + &2\int_{r}^{s} \bE_{k+1}(s^\prime) ds^\prime \nonumber \\
&\lesssim_k
\sum_{j=3}^{k+2} \sum_{a_1 + \cdots + a_j = k + 2} 
\int_{r}^s \int_\rr (\e_{a_1} \cdots \e_{a_j} \e_k)^{1/2} dx \; ds^\prime, 
\label{intEineq}
\end{align}
for $0 \leq r \leq s$.
\end{corollary}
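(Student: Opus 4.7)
The plan is to obtain all three inequalities as direct consequences of the Bochner-Weitzenb\"ock lemma. For \eqref{eineq}, I would simply rearrange the identity \eqref{LBW} to read
\begin{equation*}
\partial_s \e_k - \Delta \e_k + 2\e_{k+1} = N_k,
\end{equation*}
and then apply the pointwise bound \eqref{Nk bound} on $N_k$. This requires no further work.

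For \eqref{Eineq}, I would integrate \eqref{eineq} in $x$ over $\rr$. The only step requiring justification is the vanishing of $\int_\rr \Delta \e_k \dx$. Since the initial data $\phi_0$ is classical in the sense defined in \S\ref{S:Introduction}, the heat flow $\phi$ is smooth and Schwartz with respect to $\phi(\infty)$ on its interval of existence, so each $\e_k(s, \cdot)$ is a Schwartz function on $\rr$. The divergence theorem therefore disposes of the Laplacian term, while the spatial integral commutes with $\partial_s$ (again by the Schwartz property, which justifies differentiation under the integral sign) to give the claimed pointwise-in-$s$ differential inequality for $\bE_k$.

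For \eqref{intEineq}, I would integrate \eqref{Eineq} in $s$ over $[r, s]$ and apply the fundamental theorem of calculus to the $\partial_s \bE_k$ term. Since $s \mapsto \bE_k(s)$ is smooth on the existence interval (by classical parabolic regularity applied to \eqref{heatflow ex}), this step is standard, yielding $\bE_k(s) - \bE_k(r)$ on the left and the claimed space-time integral of the $N_k$-bound on the right.

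The main point to be careful about is thus merely the decay at spatial infinity that allows one to discard the Laplacian term and commute derivatives with integration; this is not a genuine obstacle, as classical initial data propagate to classical (in particular, spatially Schwartz) solutions throughout the lifespan of the flow. All three inequalities of the corollary then follow essentially by inspection from the preceding lemma.
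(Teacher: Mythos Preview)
Your proposal is correct and matches the paper's approach exactly: the paper's proof simply states that \eqref{eineq} follows from \eqref{LBW} and \eqref{Nk bound}, and that the remaining inequalities follow by integrating \eqref{eineq}. Your added justification for discarding the Laplacian term and commuting $\partial_s$ with the spatial integral is appropriate but more detailed than what the paper supplies.
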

\begin{proof}
Inequality (\ref{eineq}) follows from (\ref{LBW}) and (\ref{Nk bound}).
The remaining inequalities follow from integrating in (\ref{eineq}).
\end{proof}

The following corollary and its proof are essentially contained in \cite[Corollary 3.6]{T4}.
We present the statement and proof here for convenience
and to introduce the diamagnetic inequality,
which reappears in subsequent arguments.
\begin{corollary}
Let $\phi$ be a heat flow.  Then for each $k \geq 1$ we have
\begin{equation}
\partial_s \sqrt{\e_k} - \Delta \sqrt{\e_k}
\leq C_k
\sum_{j = 3}^{k+2} \sum_{a_1 + \cdots + a_j = k + 2}
(\e_{a_1} \cdots \e_{a_j})^{1/2}
\label{sqrty}
\end{equation}
in the distributional sense for some constant $C_k > 0$.
\end{corollary}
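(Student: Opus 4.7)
The plan is to reduce this to the already-established differential inequality (\ref{eineq}) for $\e_k$ via a chain-rule computation, with the key cancellation coming from a diamagnetic-type inequality $|\partial_x \e_k|^2 \leq 4 \e_k \e_{k+1}$. Since $\sqrt{\e_k}$ is not smooth where $\e_k$ vanishes, I will first carry out the calculation for the regularized quantity $u_\epsilon := \sqrt{\e_k + \epsilon^2}$ and pass to the limit $\epsilon \to 0^+$ to obtain the distributional inequality.

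First I would record the diamagnetic bound. Writing $\e_k = \sum_{\bm} \langle (\pd)_{\bm}^k \phi, (\pd)_{\bm}^k \phi \rangle_\ph$ with the implicit sum over $\bm \in \{1,2\}^k$, the Leibniz rule (\ref{Leibniz2}) gives
\begin{equation*}
\partial_j \e_k = 2 \sum_{\bm} \langle (\pd)_j (\pd)_{\bm}^k \phi, (\pd)_{\bm}^k \phi \rangle_\ph,
\end{equation*}
and Cauchy--Schwarz applied to the sum over $\bm$, together with the pointwise identity $\sum_{j,\bm} |(\pd)_j (\pd)_{\bm}^k \phi|^2_\ph = \e_{k+1}$, yields
\begin{equation*}
|\partial_x \e_k|^2 \leq 4\, \e_k\, \e_{k+1}.
\end{equation*}

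Next I would compute, for the smooth function $u_\epsilon$,
\begin{equation*}
\partial_s u_\epsilon - \Delta u_\epsilon
= \frac{\partial_s \e_k - \Delta \e_k}{2 u_\epsilon} + \frac{|\partial_x \e_k|^2}{4 u_\epsilon^3}.
\end{equation*}
Invoking (\ref{eineq}) to bound $\partial_s \e_k - \Delta \e_k \leq -2\e_{k+1} + C_k \sum_j \sum_{a_1+\cdots+a_j = k+2} (\e_{a_1} \cdots \e_{a_j}\e_k)^{1/2}$ and the diamagnetic bound on $|\partial_x \e_k|^2$, the two dominant terms involving $\e_{k+1}$ read $-\e_{k+1}/u_\epsilon$ and at most $\e_k \e_{k+1}/u_\epsilon^3$; since $\e_k/u_\epsilon^2 \leq 1$, their sum is nonpositive, and I am left with
\begin{equation*}
\partial_s u_\epsilon - \Delta u_\epsilon \leq \frac{C_k}{2}\sum_{j=3}^{k+2}\sum_{a_1+\cdots+a_j=k+2} \frac{(\e_{a_1}\cdots \e_{a_j} \e_k)^{1/2}}{u_\epsilon} \leq \frac{C_k}{2}\sum_{j=3}^{k+2}\sum_{a_1+\cdots+a_j=k+2} (\e_{a_1}\cdots \e_{a_j})^{1/2},
\end{equation*}
using $\sqrt{\e_k}/u_\epsilon \leq 1$ in the last step. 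This estimate holds pointwise and hence distributionally.

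Finally I would pass to the limit. As $\epsilon \to 0^+$, $u_\epsilon \to \sqrt{\e_k}$ uniformly on compact sets, and in particular in $L^1_\mathrm{loc}$, so $\partial_s u_\epsilon - \Delta u_\epsilon \to \partial_s \sqrt{\e_k} - \Delta \sqrt{\e_k}$ in the sense of distributions, giving (\ref{sqrty}) with $C_k = C_k/2$ after relabeling. The main subtlety is the zero set of $\e_k$, which is precisely what forces the distributional formulation and the regularization; away from that set the inequality is a classical pointwise identity whose key mechanism is the diamagnetic-type cancellation of the $\e_{k+1}$ terms.
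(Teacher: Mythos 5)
Your proof is correct and follows essentially the same route as the paper: the chain-rule computation for $\sqrt{\epsilon^2 + \e_k}$, the diamagnetic bound $\lvert \partial_x \e_k \rvert^2 \leq 4 \e_k \e_{k+1}$ to absorb the gradient term into $-2\e_{k+1}$, and the limit $\epsilon \to 0$ are exactly the paper's argument, which merely works formally with $\sqrt{\e_k}$ and relegates the regularization to a remark. Your write-up carries out the regularization explicitly, which is if anything slightly more complete.
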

\begin{proof}
We shall only work formally; our arguments may be justified by replacing $\sqrt{\e_k}$ by $\sqrt{\epsilon^2 + \e_k}$ and taking distributional limits sending $\epsilon \to 0$. 
Writing
\begin{equation*}
\partial_s \e_k = 2 \sqrt{\e_k} \partial_s \sqrt{\e_k}
\quad\text{and}\quad
\Delta \e_k = 2 \sqrt{\e_k} \Delta \sqrt{\e_k} + 2 \lvert \partial_x \sqrt{\e_k} \rvert^2,
\end{equation*}
we have
\begin{equation}
\partial_s \sqrt{\e_k} - \Delta \sqrt{\e_k} + \frac{\e_{k+1} - \lvert \partial_x \sqrt{\e_k}\rvert^2}{\sqrt{\e_k}}
\lesssim_k
\sum_{j = 3}^{k+2} \sum_{a_1 + \cdots + a_j = k + 2}
(\e_{a_1} \cdots \e_{a_j})^{1/2}.
\label{presqrty}
\end{equation}
The \emph{diamagnetic inequality}
\begin{equation}
\lvert \partial_x \sqrt{\e_k} \rvert \leq \sqrt{\e_{k+1}}
\label{diamagnetic}
\end{equation}
follows from the Leibniz rule (\ref{Leibniz2}) and Cauchy-Schwarz:
\begin{equation*}
\lvert \partial_x \e_k \rvert \leq 2 \sqrt{\e_k} \sqrt{\e_{k+1}}.
\end{equation*}
Using (\ref{diamagnetic}) in (\ref{presqrty}) implies (\ref{sqrty}).
\end{proof}
Using positivity of the heat kernel (\ref{heat kernel}) (or the maximum principle), 
we derive from (\ref{sqrty}) a Duhamel-type \emph{inequality} (cf.~(\ref{Duhamel}))
\begin{equation}
\sqrt{\e_k}
\leq
e^{(s_1 - s_0)\Delta} \sqrt{\e_k}(s_0)
+
C_k \int_{s_0}^{s_1} e^{(s_1 - s)\Delta}
\sum_{j = 3}^{k+2} \sum_{a_1 + \cdots + a_j = k + 2}
(\e_{a_1} \cdots \e_{a_j})^{1/2}(s) \ds.
\label{Duhamel inequality}
\end{equation}

\section{Local theory and continuation} \label{S:Small Energy Estimates}

We begin with a standard local existence and uniqueness result.

\begin{thm}
Let $\phi_0 : \rr \to \cM \hookrightarrow_\iota \mathbf{R}^n$ differ from $\phi(\infty) \in \cM$
by a Schwartz function.
Then
there exists an $S > 0$ and a unique smooth heat flow extension
$\phi : [0, S] \times \rr \to \cM \hookrightarrow_\iota \mathbf{R}^n$
such that for each fixed
$s \in S$ the function $\phi(s) - \phi(\infty)$ is Schwartz.
Moreover, $\phi$ may be smoothly continued in time provided $\partial_x \phi$
remains bounded.
\label{LWP}
\end{thm}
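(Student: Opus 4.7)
The plan is to solve (\ref{heatflow ex}) by contraction, bootstrap to smoothness and Schwartz decay, verify that the image remains in $\iota(\cM)$, and then read off the continuation criterion from (\ref{Duhamel inequality}). For local existence, I would first extend $\Pi$ to a smooth, compactly supported bilinear form on $\mathbf{R}^n$ by cutting off in a tubular neighborhood of $\iota(\cM)$, whose radius is bounded below in terms of the injectivity radius and the extrinsic curvature. Setting $u := \iota \circ \phi - \iota(\phi_0(\infty))$, the extended PDE becomes the Duhamel equation
\begin{equation*}
u(s) = e^{s\Delta} u_0 + \int_0^s e^{(s - \tau)\Delta}\, \Pi\bigl(u(\tau) + \iota(\phi_0(\infty))\bigr)(\partial_x u(\tau), \partial_x u(\tau))\, d\tau,
\end{equation*}
which I would solve by contraction in $X_S := C([0,S]; C^1_x(\rr; \mathbf{R}^n))$. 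The linear piece is controlled by $\lVert u_0 \rVert_{C^1_x}$ via the trivial semigroup bound (since $e^{s\Delta}$ commutes with $\partial_x$ and is contractive on $C^0_x$), while the $C^1_x$ norm of the nonlinearity is estimated using (\ref{Young}) with $k = 1$, $p = q = \infty$, the integrable $(s-\tau)^{-1/2}$ factor paying for the extra derivative. Thus the map is a contraction on a ball of $X_S$ for $S$ small depending only on $\lVert u_0 \rVert_{C^1_x}$, and uniqueness in $X_S$ follows by applying the same estimates to the difference of two candidate solutions.

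Second, I would upgrade to smoothness and Schwartz decay by parabolic bootstrap: once $u \in C([0,S]; C^1_x)$, the nonlinearity lies in $C([0,S]; C^0_x)$, so (\ref{Young}) upgrades $u$ to $C([0,S]; C^2_x)$ on $[\delta, S]$ for any $\delta > 0$, and up to $s = 0$ using the initial regularity of $u_0$; iteration gives $u \in C^\infty([0, S] \times \rr)$, and Schwartz decay of every derivative propagates from $u_0$ since the heat kernel preserves the Schwartz class and the nonlinearity is quadratic in $\partial_x u$ with smooth bounded coefficients. To confirm $\phi := \iota(\phi_0(\infty)) + u$ remains in $\iota(\cM)$, I would exploit the defining property of $\Pi$, namely that $\Pi|_\cM(X,Y)$ is normal to $T_\phi \cM$: a direct computation of the squared distance $d^2(\phi) := \mathrm{dist}(\phi, \iota(\cM))^2$ yields an inequality of the form $(\partial_s - \Delta) d^2 \lesssim (1 + \lvert \partial_x u \rvert^2)\, d^2$, so Gronwall combined with the vanishing initial datum $d^2(\phi_0) \equiv 0$ forces $d^2(\phi) \equiv 0$. (Alternatively, one can compare with the intrinsic smooth solution produced via local coordinates and invoke uniqueness.)

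Finally, for continuation, suppose the smooth heat flow exists on $[0, S^*)$ with $M := \sup_{[0, S^*) \times \rr} \lvert \partial_x \phi \rvert < \infty$; the task is to bound every $\e_k$ uniformly on this slab, after which smooth extension to $s = S^*$ and a reapplication of local existence extend $\phi$ past $S^*$. Applying (\ref{Duhamel inequality}) with $k = 2$ and using $\e_1 \leq M^2$, all terms on the right-hand side are bounded by $M^2 \sqrt{\e_2} + M^4$, so taking $L^\infty_x$ norms and invoking Gronwall gives a uniform bound on $\lVert \sqrt{\e_2}(s) \rVert_{L^\infty_x}$ on $[0, S^*)$. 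Iterating the same argument for $k = 3, 4, \ldots$ — at each stage the lower-order $\e_a$ are already controlled, so the nonlinear right-hand side of (\ref{Duhamel inequality}) is linear in $\sqrt{\e_k}$ up to known inhomogeneous terms — yields uniform bounds on $\e_k$ for every $k$, hence the claimed smooth extension. The main obstacle I anticipate is the target-preservation step: the extension of $\Pi$ off $\iota(\cM)$ must be chosen with enough care that the Gronwall inequality for $d^2$ does not acquire uncontrolled contributions from the off-manifold part of $\Pi$, which is most cleanly handled by picking the extension via pull-back through the nearest-point projection before cutoff.
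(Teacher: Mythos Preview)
Your local existence argument is essentially identical to the paper's: both run Picard iteration on the Duhamel formulation (\ref{heatflow Duhamel}) in $C^0_s C^1_x$, then bootstrap to smoothness and Schwartz decay (the paper phrases the latter as ``working in weighted spaces''). You are more careful than the paper in one respect: you explicitly treat the target-preservation step via a Gronwall inequality for $\mathrm{dist}(\phi,\iota(\cM))^2$, whereas the paper simply takes for granted that the extrinsic solution stays on $\iota(\cM)$.

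The genuine difference is in the continuation criterion. The paper's argument is the one-line observation that the time of local existence in the contraction step depends only on $\lVert u_0\rVert_{C^1_x}$; since $\cM$ is compact (so $\lVert u\rVert_{C^0_x}$ is automatically bounded) and $\partial_x\phi$ is assumed bounded, one simply reapplies local existence from any time near $S^*$. Your route instead feeds the intrinsic Duhamel inequality (\ref{Duhamel inequality}) into a Gronwall iteration to bound every $\e_k$ uniformly on $[0,S^*)$. This works---for each $k$ the only occurrence of $\e_k$ on the right-hand side of (\ref{Duhamel inequality}) comes from the $(1,1,k)$ partition and enters linearly in $\sqrt{\e_k}$ with coefficient $\e_1\le M^2$---but it is more labor than necessary. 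The payoff of your approach is that it is intrinsic and yields uniform higher-order bounds directly; the paper's approach is shorter and relies only on the extrinsic contraction estimate already in hand.
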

\begin{proof}
Rewrite (\ref{heatflow ex}) using the Duhamel formula (\ref{Duhamel}) as
\begin{equation}
\phi(s) = e^{s \Delta} \phi(0) +
\int_0^s e^{(s - s^\prime) \Delta}
\left(
\Pi(\phi)(\partial_x \phi, \partial_x \phi)
\right)(s^\prime) \ds^\prime.
\label{heatflow Duhamel}
\end{equation}
As $\phi(0) = \phi_0$ differs from a Schwartz function by a constant, the linear
solution $e^{s \Delta} \phi_0$ has all derivatives uniformly bounded.
Using (\ref{Young2}) and Picard iteration establishes local existence and uniqueness in
the space $C_s^0 C_x^1 ([0, S] \times \rr)$ for sufficiently small $S$.
In view of the fact that $\iota$ is a uniform isometric embedding,
differentiating (\ref{heatflow Duhamel}) and using the higher order parabolic regularity
estimates (\ref{Young}) leads to the conclusion that $\phi$ is smooth and has
all derivatives bounded on $[0, S]$.  By working in weighted spaces, e.g.
$\lVert \langle x \rangle^k \rVert_{C_s^0 C_x^\ell}([0, S] \times \rr)$, one can
show that $\phi(s) - \phi(\infty)$ is rapidly decreasing in space for each fixed $s$,
which, combined with boundedness of derivatives and Taylor's theorem with remainder
ensures that $\phi(s)$ is Schwartz.

From the usual iteration argument and the compactness assumption
it follows that $\phi$ may be continued in time and moreover remains Schwartz
so long as $\partial_x \phi$ remains bounded.
\end{proof}

\subsection{Small energy results}\label{S:Small energy}

The purpose of this subsection is to establish the following
\begin{thm}
Let $\phi_0 : \rr \to \cM$ be classical initial data.
Then for $E_0 := E(\phi_0)$ sufficiently small
there exists a unique smooth heat flow extension
$\phi: \mathbf{R}^+ \times \rr \to \cM$ such that
$\phi(s) \to_{s \to \infty} \phi(\infty)$ in the $C_x^\infty(\rr \to \cM)$ topology
and such that for $k \geq 1$ the heat flow extension
$\phi$ satisfies the following energy estimates:
\begin{align}
\int_0^\infty \int_\rr s^{k-1} \e_{k+1}(s, x) \dx ds
&\lesssim_k E_0, \label{ee1} \\
\sup_{0 < s < \infty} s^{k-1} \int_\rr \e_k (s, x) \dx
&\lesssim_k E_0, \label{ee2} \\
\sup_{ \substack{ 0 < s < \infty \\ x \in \rr } } s^k \e_k(s, x)
&\lesssim_k E_0, \label{ee3} \\
\int_0^\infty s^{k-1} \sup_{x \in \rr} \e_k(s, x) \ds
&\lesssim_k E_0 \label{ee4}.
\end{align}
\label{SmallEnergyResult}
\end{thm}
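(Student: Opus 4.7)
The plan is a continuity argument on the maximal interval of existence, combined with induction on $k$, using the Bochner-Weitzenb\"ock estimates of Section \ref{S:Geometry} together with parabolic regularity \eqref{Young} and integrated parabolic regularity \eqref{Strichartz}. I would begin with the base case $k=1$: because \eqref{heatflowequation} is the downward $L^2$ gradient flow of \eqref{E fun}, $\partial_s \bE_1 = -2 \lVert \partial_s \phi \rVert_{L_x^2}^2 \leq 0$, so $\bE_1(s) \leq E_0$ throughout. Integrating \eqref{eineq} at $k=1$ in $x$ gives $\partial_s \bE_1 + 2 \bE_2 \lesssim \int_\rr \e_1^2 \dx$, and combining the diamagnetic inequality \eqref{diamagnetic} with \eqref{GN5} yields
$\int_\rr \e_1^2 \dx = \lVert \sqrt{\e_1} \rVert_{L_x^4}^4 \lesssim \bE_1 \bE_2 \leq E_0 \bE_2$.
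For $E_0$ small enough the dissipation absorbs this nonlinearity, producing \eqref{ee1} and \eqref{ee2} at $k=1$.

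The pointwise estimates \eqref{ee3} and \eqref{ee4} at $k=1$ follow by applying the Duhamel-type inequality \eqref{Duhamel inequality} to $\sqrt{\e_1}$: the linear contribution is bounded directly by \eqref{Young} and \eqref{Strichartz} in terms of $E_0^{1/2}$, while the nonlinear source $\e_1^{3/2}$ is controlled by a bootstrap that feeds the sought bounds on $s^{1/2}\lVert\sqrt{\e_1}\rVert_{L_x^\infty}$ back into themselves. The higher-order cases proceed by induction on $k$: assuming \eqref{ee1}--\eqref{ee4} through index $k$, I apply the $(k+1)$-st Bochner-Weitzenb\"ock inequality \eqref{eineq}, whose right-hand side is a sum of products of $j \geq 3$ factors of total derivative order $k+3$, each factor of positive order. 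At most one factor per summand has order $k+1$; the remaining factors have strictly lower orders and are controlled by the induction hypothesis via H\"older and Gagliardo-Nirenberg interpolation, while the top-order factor is absorbed into the dissipation $\e_{k+2}$ using small-energy smallness propagated through \eqref{ee3} at lower levels. Weighting by $s^k$ and integrating yields \eqref{ee1} and \eqref{ee2} at $k+1$; feeding the same bookkeeping into \eqref{Duhamel inequality} for $\sqrt{\e_{k+1}}$ with appropriately weighted $L^p$ norms produces \eqref{ee3} and \eqref{ee4} at $k+1$.

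Global existence and convergence complete the argument. The blow-up criterion in Theorem \ref{LWP} is excluded by \eqref{ee3} at $k=1$, which gives $\lVert \partial_x \phi \rVert_{L_x^\infty}^2 \lesssim E_0 / s$ on $(0, \infty)$, with boundedness near $s=0$ inherited from the Schwartz initial data. Convergence $\phi(s) \to \phi(\infty)$ in the $C_x^\infty$ topology is extracted from \eqref{ee1}--\eqref{ee4}, which force every iterated covariant derivative to decay in $s$, combined with the Schwartz initial structure. The principal obstacle, in my view, is the bookkeeping in the induction step: matching the combinatorics of the polynomial nonlinearity in \eqref{eineq} to the correct $s$-weights and Gagliardo-Nirenberg interpolations so that exactly one factor per summand is at top order (permitting its absorption into the dissipation) while the remainder closes via the induction hypothesis.
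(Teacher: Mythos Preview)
Your overall architecture---continuity argument, induction on $k$, Bochner--Weitzenb\"ock plus Duhamel---matches the paper's. Your base case is actually cleaner than the paper's: you close $\tA_1$ (estimate \eqref{ee1} at $k=1$) directly via \eqref{GN5} and the diamagnetic inequality, absorbing $\int \e_1^2 \lesssim E_0 \bE_2$ into the dissipation $2\bE_2$. The paper instead introduces an auxiliary bootstrap hypothesis $\int_0^r \lVert \e_1 \rVert_{L_x^\infty} \, ds \leq 1$ and closes $\tA_1$ through it (Propositions \ref{tB49tA} and \ref{Newprop}), which is more circuitous. Your route is a genuine simplification at this step.

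Your description of the induction step, however, is imprecise in a way that obscures the actual mechanism. At level $k+1$ the Bochner--Weitzenb\"ock inequality \eqref{eineq} yields $\partial_s \bE_{k+1} + 2\bE_{k+2} \lesssim (\text{nonlinear terms})$: the dissipation is $\bE_{k+2}$, but the highest-order factors in the nonlinearity are of order $k+1$, not $k+2$, so nothing is literally ``absorbed into the dissipation $\e_{k+2}$.'' What the paper does (Propositions \ref{A1 Implies Ak} and \ref{Aks Imply Bk}) is first reduce via the $\C_j$ hypotheses to the inequality \eqref{AGM}, then multiply by $s^{k}$, integrate against a smooth cutoff $\rho(s/s_0)$, and bound the resulting terms $\int s^{a-2}\bE_a\,ds$ on the right by the previously established $\tA_{a-1}$. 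The top-order contribution $\int s^{k-1}\bE_{k+1}\,ds$ is handled not by absorption but by the inductive hypothesis $\tA_k$. Your scheme can be made to work along these lines (integration by parts in $s$ instead of a cutoff also works, provided you control the boundary term via the $\B$-type hypotheses), but you should drop the absorption language and state the iteration accurately.
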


Our first step towards proving the theorem is to
establish some a priori estimates on energy densities 
under the assumption that the initial data has sufficiently small energy.
The proofs resemble closely 
arguments found in \cite[\S4]{T4}.  
The present setting, however, demands several adaptations and also some additions,
e.g. the proof of Proposition \ref{Newprop}, which has no counterpart in \cite{T4}.
After the a priori estimates are established, the global existence, uniqueness, and smoothness
claims of the theorem follow easily from a continuity argument.  The convergence claim
and the family of estimates (\ref{ee4}) will be taken up last and will follow respectively
from (\ref{ee3}) and (\ref{ee1}).

Let $I$ be a connected time interval containing the subinterval $[0, r]$, and let $\phi : I \times \rr \to \cM$ be a heat flow with initial energy $E_0$ at time $s = 0$.
Assume that on $[0, r]$ the covariant parabolic regularity estimates
\begin{align*}
\int_0^{r} \int_\rr s^{k-1} \e_{k+1}(s, x) \dx ds &\leq \sqrt{E_0}, \\
\sup_{0 < s < r} s^{k-1} \int_\rr \e_k(s, x) \dx &\leq \sqrt{E_0}, \\
\sup_{ \substack{ 0 < s < r \\ x \in \rr}} s^k \e_k(s, x) &\leq \sqrt{E_0}
\end{align*}
hold for $k = 1, 2, \ldots, N$, with, say, $N = 10$.  
For each such $k$ we respectively label these statements $\A_k, \B_k,$ and $\C_k$.  
Let $\tA_k, \tB_k$ and $\tC_k$ respectively refer to the similar but stronger 
(taking $E_0$ small) statements
\begin{align}
\int_0^{r} \int_\rr s^{k-1} \e_{k+1}(s, x) \dx ds & \lesssim_k E_0, \\ 
\sup_{0 < s < r} s^{k-1} \int_\rr \e_k(s, x) \dx & \lesssim_k E_0, \\ 
\sup_{ \substack{ 0 < s < r \\ x \in \rr}} s^k \e_k(s, x) &\lesssim_k E_0, 
\end{align}
where the implied constants are only allowed to depend upon the integer $k$ and the topological and geometric properties of $\cM$.
Note that $\A_k$, $\B_k$, and $\C_k$, $k = 1, \ldots, N$ may always be made to hold simultaneously
by taking $r$ sufficiently small.
We therefore assume now and throughout this subsection that this has been done.
In addition to the above inequalities we shall need the integrated $L_x^\infty$ parabolic regularity estimate
\begin{equation}
\int_0^{r} \lVert \e_1(s) \rVert_{L_x^\infty(\rr)} \ds \leq 1,
\label{Lia}
\end{equation}
and so we choose $r$ even smaller if necessary so that this holds as well.
We shall improve (\ref{Lia}) to
\begin{equation}
\int_0^{r} \lVert \e_1(s) \rVert_{L_x^\infty(\rr)} \ds \lesssim E_0.
\label{Li}
\end{equation}
From (\ref{Eineq}) and symmetry it follows that
\begin{equation*}
\partial_s \bE_k  + 2 \bE_{k+1} \lesssim_k
\sum_{j=3}^{k+2} 
\sum_{ \substack{ a_1 + \cdots + a_j = k + 2 \\ a_1 \leq \cdots \leq a_j } }
\int_\rr (\e_{a_1} \cdots \e_{a_j} \e_k )^{1/2} dx.
\end{equation*}
We show that together $\A_k, \B_k, \C_k$, $k = 1, \ldots N$, and (\ref{Lia})
imply the a priori estimates $\tA_k, \tB_k, \tC_k$, $k = 1, \ldots, N$, and (\ref{Li}).
This we prove through a sequence of propositions:
Proposition \ref{EnergyMono} establishes that $\tB_1$ is true.
In Propositions \ref{tB49tA} and \ref{Newprop} respectively, 
$\tB_1$ and (\ref{Lia}) are used to show that
$\A_1$ and (\ref{Li}) hold.
By using $\tA_1$,
Proposition \ref{A1 Implies Ak} concludes that $\tA_k$, $k = 2, \ldots, N$ hold.
The $\tA_k$'s then imply $\tB_j$, $j =1, \ldots, N$ according to Proposition \ref{Aks Imply Bk}.
Finally, the $\tB_k$'s are used to establish $\tC_j$, $j = 1, \ldots, N$ in Proposition \ref{B to the C}.

Assume now that $k > 1$ so that $a_j > 1$.
Using the hypotheses $\C_j$ for $j < k$, we  conclude
\begin{equation*}
2 \bE_{k+1} + \partial_s \bE_k \lesssim_k \sqrt{E_0} 
\sum_{a=2}^k \int_\rr s^{-(k+2-a)/2} \e_a^{1/2} \e_k^{1/2} \dx.
\end{equation*}
Applying Cauchy-Schwarz yields
\begin{equation*}
2 \bE_{k+1} + \partial_s \bE_k \lesssim_k \sqrt{E_0} \sum_{a=2}^k s^{-(k+2-a)/2}
\bE_a^{1/2} \bE_k^{1/2},
\end{equation*}
and so by the arithmetic- geometric-mean inequality,
\begin{equation}
2 \bE_{k+1} + \partial_s \bE_k \lesssim_k \sqrt{E_0} \sum_{a=2}^k s^{a-k-1} \bE_a.
\label{AGM}
\end{equation}
Inequality (\ref{AGM}) will be used extensively in the proofs below.

\begin{proposition}
Suppose that $\tA_1$ holds.  Then $\tA_k$ holds for $k = 2, 3, \ldots, N$.
\label{A1 Implies Ak}
\end{proposition}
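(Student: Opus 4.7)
The plan is to induct on $k$, with base case $k=2$ to which we apply the hypothesis $\tA_1$. The engine driving the argument is inequality (\ref{AGM}), which already delivers an improved constant of size $\sqrt{E_0}$ on its right-hand side thanks to the pointwise bounds $\C_j$ for $j < k$; the task is then simply to repackage this differential inequality into the integrated form asserted by $\tA_k$.

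Concretely, fix $k \geq 2$ and assume inductively that $\tA_j$ holds for all $1 \leq j \leq k-1$. Multiply (\ref{AGM}) by $s^{k-1}$ and integrate over $[0, r]$ to obtain
\begin{equation*}
2 \int_0^r s^{k-1} \bE_{k+1}(s) \ds + \int_0^r s^{k-1} \partial_s \bE_k(s) \ds
\lesssim_k
\sqrt{E_0} \sum_{a = 2}^k \int_0^r s^{a - 2} \bE_a(s) \ds.
\end{equation*}
The $\partial_s$ term is handled by integration by parts:
\begin{equation*}
\int_0^r s^{k-1} \partial_s \bE_k(s) \ds
= r^{k-1} \bE_k(r) - (k-1) \int_0^r s^{k-2} \bE_k(s) \ds,
\end{equation*}
where the boundary contribution at $s = 0$ vanishes because $k \geq 2$ and $\bE_k(0)$ is finite by the smoothness provided by Theorem \ref{LWP}. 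Dropping the nonnegative boundary term $r^{k-1} \bE_k(r)$ yields
\begin{equation*}
2 \int_0^r s^{k-1} \bE_{k+1}(s) \ds
\lesssim_k
\int_0^r s^{k-2} \bE_k(s) \ds + \sqrt{E_0} \sum_{a=2}^k \int_0^r s^{a-2} \bE_a(s) \ds.
\end{equation*}

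Now I would observe that for each $a \in \{2, \ldots, k\}$, the integral $\int_0^r s^{a-2} \bE_a(s) \ds = \int_0^r \int_\rr s^{a-2} \e_a(s, x) \dx \ds$ is precisely the quantity controlled by $\tA_{a-1}$, which is available by the inductive hypothesis (and is exactly $\tA_1$ in the base case $k = 2$, where $a$ can only equal $2$). Hence each such integral is $\lesssim_a E_0$, and collecting the estimates gives
\begin{equation*}
\int_0^r s^{k-1} \bE_{k+1}(s) \ds
\lesssim_k
E_0 + \sqrt{E_0} \cdot E_0
\lesssim_k E_0,
\end{equation*}
where the last step uses that $E_0$ is small. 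This is exactly $\tA_k$, completing the induction.

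The computation is essentially a weighted energy identity, and I do not anticipate a genuine obstacle; the only point of care is the integration-by-parts boundary term at $s = 0$, which requires invoking smoothness and finiteness of $\bE_k(0)$, and the bookkeeping that each term on the right-hand side of (\ref{AGM}), after weighting by $s^{k-1}$, aligns exactly with a lower-index $\tA$-statement already in hand.
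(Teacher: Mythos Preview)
Your proof is correct and follows essentially the same inductive strategy as the paper, using (\ref{AGM}) as the engine and recognizing that the integrals $\int_0^r s^{a-2}\bE_a\,ds$ on the right-hand side are exactly the quantities bounded by the lower-index $\tA_{a-1}$.

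There is one technical difference worth noting. The paper multiplies (\ref{AGM}) not by the global weight $s^{k-1}$ but by a smooth dyadic cutoff $\rho(s/s_0)$, obtains a localized inequality, bounds the $\bE_2,\ldots,\bE_{k-1}$ terms using the weak bootstrap hypotheses $\A_1,\ldots,\A_{k-2}$ and the $\bE_k$ term using $\tA_{k-1}$, and then sums over dyadic $s_0$. Your argument instead integrates by parts directly against $s^{k-1}$ on $[0,r]$ and invokes all of the $\tA_{a-1}$'s inductively. Your route is cleaner: it avoids the dyadic machinery and the appeal to the weak bootstrap bounds $\A_j$, at the small cost of needing the boundary observation $s^{k-1}\bE_k(s)\to 0$ as $s\to 0^+$, which you correctly justify via $k\ge 2$ and finiteness of $\bE_k(0)$. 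Both approaches are equivalent in strength and yield the same conclusion.
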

\begin{proof}
Let $\rho$ be a smooth nonnegative cutoff function supported on $[1/2, 2]$ and bounded by $1$
and let $s_0 \in (0 , r/2)$.
From (\ref{AGM}) it follows that
\begin{align*}
s_0^{k-1} \int_{0}^{r} \rho(\frac{s}{s_0}) \bE_{k+1} ds
- &s_0^{k-2} \int_{0}^{r} (\partial_s \rho)(\frac{s}{s_0}) \bE_k(s) ds \\
&\lesssim_k
\sqrt{E_0} \sum_{a = 2}^k s_0^{a-2}  \int_{0}^{r} \rho(\frac{s}{s_0}) \bE_a(s) ds,
\end{align*}
which by the triangle inequality may be rewritten as
\begin{align*}
s_0^{k-1} \int_{0}^{r} \rho(\frac{s}{s_0}) \bE_{k+1} ds
\lesssim_k& \;
\sqrt{E_0} \sum_{a = 2}^k s_0^{a-2}  \int_{0}^{r} \rho(\frac{s}{s_0}) \bE_a(s) ds\\
&+
\left\lvert
s_0^{k-2} \int_{0}^{r} (\partial_s \rho)(\frac{s}{s_0}) \bE_k(s) ds
\right\rvert.
\end{align*}
Using induction, apply $\A_1$, $\A_2$, \ldots, $\A_{k-2}$ to respectively bound the integrals of 
$\bE_2, \bE_3, \ldots, \bE_{k-1}$ in the right hand side
hand side and use $\tA_{k-1}$ to bound the integral of $\bE_k$.  
Summing dyadically in $s_0$ completes the proof.
\end{proof}

\begin{proposition}
Let $k > 1$ and assume that $\tA_{k-1}$ and $\tA_k$ hold.  Then $\tB_k$ holds.
\label{Aks Imply Bk}
\end{proposition}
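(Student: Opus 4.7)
The plan is to convert the differential inequality (\ref{AGM}) into a pointwise bound on the scale-critical quantity $s^{k-1}\bE_k(s)$ by integrating in $s$. Multiplying (\ref{AGM}) by $s^{k-1}$ and using $s^{k-1}\partial_s\bE_k = \partial_s(s^{k-1}\bE_k)-(k-1)s^{k-2}\bE_k$, one obtains
\begin{equation*}
\partial_s\bigl(s^{k-1}\bE_k\bigr) + 2s^{k-1}\bE_{k+1} \;\lesssim_k\; (k-1)\,s^{k-2}\bE_k \;+\; \sqrt{E_0}\sum_{a=2}^{k} s^{a-2}\bE_a.
\end{equation*}
By Theorem \ref{LWP} the heat flow $\phi$ is smooth on $[0, r]$ and $\bE_k(s)\to\bE_k(0)<\infty$ as $s\to 0^+$, so for $k>1$ the left boundary value $\lim_{s\to 0^+} s^{k-1}\bE_k(s)$ vanishes. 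Dropping the nonnegative term $2s^{k-1}\bE_{k+1}$ from the left and integrating from $0$ to any $s_0\in(0,r)$ I would deduce
\begin{equation*}
s_0^{k-1}\bE_k(s_0) \;\lesssim_k\; \int_0^{s_0} s^{k-2}\bE_k(s)\ds \;+\; \sqrt{E_0}\sum_{a=2}^{k}\int_0^{s_0} s^{a-2}\bE_a(s)\ds.
\end{equation*}

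Next I would bound each integral on the right using the assumed $\tA$-estimates together with the untilded bootstrap hypotheses. The first integral is exactly the quantity that $\tA_{k-1}$ controls by $E_0$. In the sum the $a=k$ term coincides with this same integral and is again dominated by $\tA_{k-1}$, while for $2\leq a\leq k-1$ the bootstrap hypothesis $\A_{a-1}$ gives $\int_0^{r} s^{a-2}\bE_a(s)\ds \leq \sqrt{E_0}$; the explicit $\sqrt{E_0}$ prefactor then upgrades each such contribution to $O(E_0)$. Summing over the $O_k(1)$ indices yields $s_0^{k-1}\bE_k(s_0)\lesssim_k E_0$ uniformly in $s_0\in(0,r)$, which is precisely $\tB_k$.

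The main obstacle is bookkeeping rather than analysis: the cross terms $\bE_a$ with $a<k$ cannot be controlled through tilded estimates at this stage, but they appear in (\ref{AGM}) packaged with a smallness factor $\sqrt{E_0}$, and this factor is exactly what allows the weaker untilded hypotheses $\A_{a-1}$ to suffice without circularity. The additional hypothesis $\tA_k$ plays an auxiliary role in this argument: it controls the nonnegative dissipation integral $2\int_0^{s_0} s^{k-1}\bE_{k+1}\ds$ that was discarded from the left hand side, which can be retained if desired and fed into the subsequent proposition establishing $\tC_k$.
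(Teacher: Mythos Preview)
Your argument is correct and follows the same route as the paper: multiply (\ref{AGM}) by $s^{k-1}$, integrate from $0$ to $s_0$, and bound the resulting integrals by $\tA_{k-1}$ and the untilded hypotheses $\A_{a-1}$. The only cosmetic difference is that the paper keeps the dissipation term $2\int_0^{s_0}s^{k-1}\bE_{k+1}\,ds$ on the left and invokes $\tA_k$ through the triangle inequality, whereas you drop it by sign; your observation that $\tA_k$ is therefore auxiliary here is accurate.
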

\begin{proof}
From (\ref{AGM}) it follows that for any $s_0$ in the range $0 < s_0 < r$ we have
\begin{equation*}
2 \int_0^{s_0} s^{k-1} \bE_{k+1} \ds - \int_0^{s_0} (k-1)s^{k-2} \bE_k \ds + s_0^{k-1}
\bE_k(s_0) \lesssim_k \sqrt{E_0} \sum_{a=2}^k \int_0^{s_0} s^{a-2} \bE_a \ds.
\end{equation*}
Applying hypotheses $\A_1, \A_2, \ldots, \A_{k-1}$ to control
the integrals on the right hand side 
and $\tA_{k-1}$ and $\tA_k$ to control those on the left
implies by the triangle inequality that
\begin{equation*}
{s_0}^{k-1} \bE_k(s_0) \lesssim_k E_0,
\end{equation*}
thereby establishing $\tB_k$.
\end{proof}

\begin{proposition}
Inequality $\tB_1$ holds.
\label{EnergyMono}
\end{proposition}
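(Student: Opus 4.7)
The plan is to prove $\tilde{\mathbf{B}}_1$ via the classical energy monotonicity identity for harmonic map heat flow, namely
$$\partial_s \bE_1(s) = -2 \int_\rr \lvert \partial_s \phi(s) \rvert_{\ph}^2 \dx \leq 0,$$
which immediately yields $\bE_1(s) \leq \bE_1(0) = E_0$ for all $s \in [0, r]$. This gives $\tilde{\mathbf{B}}_1$ with implied constant $1$; in particular no smallness of $E_0$ and none of the hypotheses $\A_k, \B_k, \C_k$ are needed, which is exactly why $\tilde{\mathbf{B}}_1$ serves as the base case for the bootstrap cascade.

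First I would differentiate $\e_1 = \langle \partial_i \phi, \partial_i \phi \rangle_{\ph}$ in $s$, invoking the Leibniz rule (\ref{Leibniz2}) to bring $(\pd)_s$ inside the pairing and the zero-torsion property (\ref{NoTorsion2}) to swap $(\pd)_s$ with $\partial_i$, producing
$$\partial_s \e_1 = 2 \langle (\pd)_i \partial_s \phi, \partial_i \phi \rangle_\ph.$$
Next I would integrate over $\rr$ and integrate by parts in the $i$ index, obtaining $\partial_s \bE_1(s) = -2 \int_\rr \langle \partial_s \phi, (\pd)_j \partial_j \phi \rangle_\ph \dx$; finally, substituting the heat flow equation (\ref{heatflowequation}) $\partial_s \phi = (\pd)_j \partial_j \phi$ collapses this to the asserted identity.

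The only subtlety is justifying the spatial integration by parts. This is where I would invoke Theorem \ref{LWP}: classical initial data remains Schwartz for each fixed $s$ along the flow, with all covariant derivatives decaying rapidly in $x$, so no boundary terms arise. Given that all higher-order $\tilde{\mathbf{A}}_k, \tilde{\mathbf{B}}_k, \tilde{\mathbf{C}}_k$ bounds require the nonlinear machinery of the Bochner-Weitzenböck inequality (\ref{eineq}) and the Cauchy-Schwarz/AGM chain (\ref{AGM}), it is worth emphasizing that the $k=1$ case is fundamentally different in character: it is a single integration by parts applied to the flow equation, and its proof is essentially a one-paragraph computation rather than an induction. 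The main obstacle is therefore really just bookkeeping, with no genuinely difficult estimate to carry out.
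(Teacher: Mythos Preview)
Your proposal is correct and follows essentially the same route as the paper: both derive the energy monotonicity identity $\partial_s \bE_1 = -2\int_\rr |\partial_s\phi|_\ph^2\,dx$ via the Leibniz rule, the zero-torsion property, an integration by parts, and the heat flow equation (the paper simply runs the chain starting from $\int|\partial_s\phi|^2$ rather than from $\partial_s\e_1$). One trivial slip: since $E_0 = \tfrac12\bE_1(0)$, the conclusion is $\bE_1(s)\le \bE_1(0)=2E_0$, so the implied constant in $\tilde{\mathbf B}_1$ is $2$ rather than $1$.
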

\begin{proof}
This follows from the well-known property of \emph{energy monotonicity}
(see for instance \cite{St85}),
established using the observation that
the heat flow equation is the $L^2$-gradient flow associated to the energy functional
(\ref{E fun}):
\begin{align*}
\int_\rr \langle \partial_s \phi, \partial_s \phi \rangle_\ph \dx
&= \int_\rr \langle (\pd)_j \partial_j \phi, \partial_s \phi \rangle_\ph \dx \\
&= - \int_\rr \langle \partial_j \phi, (\pd)_j \partial_s \phi \rangle_\ph \dx \\
&= - \int_\rr \langle \partial_j \phi, (\pd)_s \partial_j \phi \rangle_\ph \dx \\
&= - \frac{1}{2} \partial_s \int_\rr \e_1 \dx.
\end{align*}
\end{proof}

\begin{proposition}
Inequality $\tB_1$ implies $\tC_1$.  For $k > 1$ the inequalities 
$\tB_1, \tB_2, \ldots, \tB_k$ and $\tC_1, \tC_2, \ldots, \tC_{k-1}$ together imply $\tC_k$.
\label{B to the C}
\end{proposition}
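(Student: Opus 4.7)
The plan is to apply the Duhamel-type inequality (\ref{Duhamel inequality}) at the dyadic time scale $s_0 = s/2$, then close a bootstrap on the quantity
\begin{equation*}
M_k(r) := \sup_{0 < s \leq r} s^{k/2} \lVert \sqrt{\e_k(s)} \rVert_{L_x^\infty(\rr)},
\end{equation*}
which is finite for each $r$ since $\phi(s)$ is Schwartz in $x$ and smooth in $(s,x)$.  The goal is to prove $M_k(r) \lesssim_k E_0^{1/2}$, which is exactly $\tC_k$.

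The linear piece $e^{(s/2)\Delta} \sqrt{\e_k(s/2)}$ is estimated in $L_x^\infty$ using (\ref{Young}) with $p=2$, $q=\infty$, which bounds it by $s^{-1/2} \bE_k(s/2)^{1/2}$; inserting $\tB_k$ then yields $\lesssim_k E_0^{1/2} s^{-k/2}$.  For the nonlinear piece, I would classify each product $(\e_{a_1} \cdots \e_{a_j})^{1/2}$ (with $a_1 \leq \cdots \leq a_j$) by whether $a_j < k$ or $a_j = k$; the constraint $a_1 + \cdots + a_j = k+2$ together with $j \geq 3$ forces $a_j \leq k$ always, and $a_j = k$ forces $j = 3$, $a_1 = a_2 = 1$, so the only critical term is $\e_1 \sqrt{\e_k}$.

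Products with all $a_\ell < k$ are handled by the induction hypothesis $\tC_{a_\ell}$ (vacuous if $k = 1$), which gives a pointwise bound $\lesssim_k E_0^{j/2} (s^\prime)^{-(k+2)/2}$.  Applying the $L^\infty$-contraction of the heat semigroup and integrating on $[s/2, s]$ produces a contribution $\lesssim_k E_0^{j/2} s^{-k/2} \leq E_0^{3/2} s^{-k/2}$, since $j \geq 3$ and $E_0$ is small.  For the critical term $\e_1 \sqrt{\e_k}$, I would estimate
\begin{equation*}
\lVert \e_1 \sqrt{\e_k}(s^\prime) \rVert_{L_x^\infty} \leq \lVert \e_1(s^\prime) \rVert_{L_x^\infty}\, (s^\prime)^{-k/2} M_k(r) \lesssim_k \lVert \e_1(s^\prime) \rVert_{L_x^\infty} s^{-k/2} M_k(r)
\end{equation*}
on $[s/2, s]$, and then invoke (\ref{Li}) to absorb the time integral into a factor of $E_0$, yielding a contribution $\lesssim_k E_0 \, M_k(r)\, s^{-k/2}$.

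Assembling the three contributions and multiplying by $s^{k/2}$ before taking $\sup_x$ and $\sup_{s \in (0, r]}$ gives
\begin{equation*}
M_k(r) \leq C_k E_0^{1/2} + C_k^\prime E_0 \, M_k(r) + C_k^{\prime\prime} E_0^{3/2}.
\end{equation*}
Choosing $E_0$ small enough that $C_k^\prime E_0 \leq 1/2$ closes the bootstrap and delivers $\tC_k$.  The main obstacle is the critical interaction $\e_1 \sqrt{\e_k}$, whose appearance would be circular if one tried to handle it with $\tC_k$ itself; the essential point is that (\ref{Li}), which carries the smallness factor $E_0$ (rather than just the unit bound (\ref{Lia})), makes this term strictly subcritical for the bootstrap.
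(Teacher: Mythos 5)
Your argument is correct, but it routes the nonlinear term differently from the paper. The paper applies the Duhamel inequality on $[s_0/2,s_0]$ and then uses the $L_x^2\to L_x^\infty$ parabolic smoothing bound from (\ref{Young}), estimating each product $\lVert(\e_{a_1}\cdots\e_{a_j})^{1/2}\rVert_{L_x^2}$ by placing the highest-order factor in $L_x^2$ (controlled by $\tB_{a_j}$) and the rest in $L_x^\infty$ (controlled by $\tC_{a_\ell}$, $a_\ell<k$); the integrable singularity $(s_0-s)^{-1/2}$ then produces the desired $s_0^{-k/2}$ decay. Because $\sqrt{\e_k}$ lands in $L_x^2$ where $\tB_k$ applies, the critical interaction $\e_1\sqrt{\e_k}$ is not self-referential for $k\ge 2$ in that scheme. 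You instead work entirely in $L_x^\infty$ via the contraction property of the heat semigroup, which forces you to carry $\lVert\sqrt{\e_k}\rVert_{L_x^\infty}$ itself inside the critical term; you close this by an absorption argument on $M_k(r)$, with the smallness supplied by (\ref{Li}). Your classification of the products (that $a_j\le k$ always, with $a_j=k$ only for $j=3$, $a_1=a_2=1$) is correct, the finiteness of $M_k(r)$ needed to absorb is legitimate, and the scheme is pleasantly uniform in $k$ (the paper's $k=1$ case, where no $\tC_j$ hypotheses are available, in fact also requires either the ambient a priori bound $\C_1$ or an absorption of exactly your type). The one caveat is that your proof uses (\ref{Li}), which is not among the hypotheses listed in the proposition; this is harmless in context, since (\ref{Li}) is derived in Proposition \ref{Newprop} from $\tB_1$ and the standing assumption (\ref{Lia}) before Proposition \ref{B to the C} is invoked, but you should state explicitly that you are importing it.
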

\begin{proof}
Applying Duhamel (\ref{Duhamel inequality}) and parabolic regularity (\ref{Young}) to (\ref{sqrty}), we get
\begin{align*}
\lVert \sqrt{\e_k}(s_0) \rVert_{L_x^\infty} 
\lesssim_k& \;
s_0^{-1/2} 
\lVert \sqrt{\e_k}(s_0/2) \rVert_{L_x^2} \nonumber \\
&+ \int_{s_0/2}^{s_0} 
\sum_{j = 3}^{k+2}
\sum_{a_1 + \cdots + a_j = k + 2}
(s_0-s)^{-1/2} \lVert (\e_{a_1} \cdots \e_{a_j})^{1/2} \rVert_{L_x^2} \ds.
\end{align*}
To bound the first term on the right hand side, use $\tB_k$.  For the remaining terms, use the hypotheses $\tB_j$, $\tC_j$.
\end{proof}

\begin{proposition}
Inequalities $\tB_1$ and \textrm{ (\ref{Lia})} imply $\tA_1$.
\label{tB49tA}
\end{proposition}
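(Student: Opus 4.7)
The plan is to apply the Bochner-Weitzenb\"ock inequality (\ref{Eineq}) at level $k=1$ and exploit the fact that the nonlinear term there is quartic in $\partial_x \phi$, equivalently quadratic in $\e_1$. Indeed, for $k=1$ the double sum in (\ref{Eineq}) collapses to the single contribution $j=3$, $a_1=a_2=a_3=1$, giving
\begin{equation*}
\partial_s \bE_1(s) + 2 \bE_2(s) \lesssim \int_\rr \e_1(s,x)^2 \dx.
\end{equation*}

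Next I would bound the right-hand side by H\"older's inequality as
\begin{equation*}
\int_\rr \e_1(s,x)^2 \dx \leq \lVert \e_1(s) \rVert_{L_x^\infty(\rr)}\, \bE_1(s).
\end{equation*}
Hypothesis $\tB_1$ supplies the uniform bound $\bE_1(s) \lesssim E_0$ for $s \in (0,r)$, so
\begin{equation*}
\partial_s \bE_1(s) + 2\bE_2(s) \lesssim E_0\, \lVert \e_1(s) \rVert_{L_x^\infty(\rr)}.
\end{equation*}
Integrating over $[0,r]$, discarding the nonnegative $\bE_1(r)$ on the left, and invoking the hypothesis (\ref{Lia}) to control the time integral of $\lVert \e_1(s) \rVert_{L_x^\infty}$ by $1$ yields
\begin{equation*}
2 \int_0^r \bE_2(s) \ds \leq \bE_1(0) + C E_0 \int_0^r \lVert \e_1(s) \rVert_{L_x^\infty(\rr)} \ds \lesssim E_0,
\end{equation*}
since $\bE_1(0) = 2 E_0$. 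This is precisely $\tA_1$.

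There is no substantive obstacle here: the argument is a direct energy estimate, and the only point to verify is that (\ref{Nk bound}) at $k=1$ genuinely reduces to a quadratic expression in $\e_1$, which is what makes it possible to absorb one factor into the $L_x^\infty$ norm controlled by (\ref{Lia}) and the other into the $L_x^1$ norm controlled by $\tB_1$. Conceptually this is why the small-energy threshold ever enters: curvature-induced nonlinearities at the lowest level are exactly at the coercive power to be closed by energy monotonicity plus a mild a priori integrated $L_x^\infty$ bound.
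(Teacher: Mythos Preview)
Your proof is correct and follows essentially the same approach as the paper's: both use the Bochner--Weitzenb\"ock inequality at $k=1$ (the paper via the integrated form (\ref{intEineq}), you via the differential form (\ref{Eineq}) followed by integration), then split $\int_\rr \e_1^2\,dx$ with H\"older into an $L_x^\infty$ factor controlled by (\ref{Lia}) and an $L_x^1$ factor controlled by $\tB_1$.
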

\begin{proof}
Here we apply (\ref{intEineq}) with $k = 1$.
By $\tB_1$ and the triangle inequality, we need only control the right hand side by $E_0$:
\begin{align*}
\int_0^{r} \int_\rr \e_1^2(s, x) \dx ds &\leq 
\int_0^{r} \sup_x \e_1(s, x) \int_\rr \e_1(s, x) \dx ds \\
&\leq 2E(s) \int_0^{r} \sup_x \e_1(s, x) \ds \\
& \lesssim E_0,
\end{align*}
where the last inequality follows from $\tB_1$ and (\ref{Lia}).
\end{proof}

\begin{proposition}
Inequalities $\tB_1$ and \textrm{ (\ref{Lia})} imply \textrm{ (\ref{Li})}.
\label{Newprop}
\end{proposition}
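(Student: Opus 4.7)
The plan is to close an $L^2_s L^\infty_x$ parabolic Strichartz bootstrap for $\sqrt{\e_1}$, with (\ref{Lia}) absorbing the quadratic self-interaction generated by the curvature nonlinearity. Set $N := \lVert \sqrt{\e_1} \rVert_{L^2_s L^\infty_x([0,r] \times \rr)}$, so that $N^2 = \int_0^r \lVert \e_1(s) \rVert_{L^\infty_x} \, ds$; the goal (\ref{Li}) is exactly $N^2 \lesssim E_0$, while (\ref{Lia}) provides the a priori bound $N^2 \leq 1$.

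The first step is to promote (\ref{Strichartz}) to a retarded form: by Minkowski's integral inequality and the change of variables $\sigma = s - s'$ inside the $L^2_s$-norm,
\[
\left\lVert \int_0^s e^{(s-s')\Delta} f(s') \, ds' \right\rVert_{L^2_s L^\infty_x} \leq \int \left\lVert \chi_{s' < s}\, e^{(s-s')\Delta} f(s') \right\rVert_{L^2_s L^\infty_x} ds' \lesssim \lVert f \rVert_{L^1_{s'} L^2_x},
\]
where the last bound is (\ref{Strichartz}) at $p = \infty$ applied to each fixed-time slice $f(s')$. Now take $L^2_s L^\infty_x$ of the $k = 1$ Duhamel inequality (\ref{Duhamel inequality}),
\[
\sqrt{\e_1}(s) \leq e^{s\Delta} \sqrt{\e_1}(0) + C_1 \int_0^s e^{(s-s')\Delta} \e_1^{3/2}(s') \, ds'.
\]
The linear piece is $\lesssim \lVert \sqrt{\e_1}(0) \rVert_{L^2_x} \lesssim E_0^{1/2}$ via (\ref{Strichartz}) at $p = \infty$. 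For the nonlinear piece, observe $\lVert \e_1^{3/2}(s) \rVert_{L^2_x} = \lVert \e_1(s) \rVert_{L^3_x}^{3/2}$, and the pointwise interpolation $\lVert \e_1 \rVert_{L^3_x}^{3/2} \leq \lVert \e_1 \rVert_{L^\infty_x} \lVert \e_1 \rVert_{L^1_x}^{1/2}$ combined with $\tB_1$ yields $\lVert \e_1^{3/2} \rVert_{L^1_{s'} L^2_x} \lesssim E_0^{1/2} N^2$. Assembling these estimates produces the basic quadratic inequality
\[
N \leq C E_0^{1/2} + C' E_0^{1/2} N^2.
\]

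The one remaining obstacle is closing this quadratic inequality, and here (\ref{Lia}) is exactly what is needed: $N^2 \leq 1$ forces $N \leq 1$, hence $N^2 \leq N$, so the previous display becomes $N \leq C E_0^{1/2}(1 + N)$. Taking $E_0$ small enough that $C' E_0^{1/2} \leq 1/2$, this rearranges to $N \lesssim E_0^{1/2}$, i.e., $N^2 = \int_0^r \lVert \e_1(s) \rVert_{L^\infty_x} \, ds \lesssim E_0$, which is precisely (\ref{Li}).
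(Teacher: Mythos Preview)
Your proof is correct and follows essentially the same route as the paper: Duhamel for $\sqrt{\e_1}$, the $p=\infty$ parabolic Strichartz estimate (\ref{Strichartz}) for the linear piece, Minkowski plus Strichartz for the retarded term, the H\"older splitting $\lVert \e_1^{3/2}\rVert_{L^2_x}\le \lVert \e_1\rVert_{L^\infty_x}\lVert \e_1\rVert_{L^1_x}^{1/2}$ together with $\tB_1$, and then the a priori bound (\ref{Lia}) to close the resulting quadratic inequality in $N^2=\int_0^r\lVert \e_1\rVert_{L^\infty_x}\,ds$. The only cosmetic difference is that you work directly with $N=\lVert\sqrt{\e_1}\rVert_{L^2_sL^\infty_x}$ whereas the paper squares first and tracks $N^2$; the estimates and their proofs coincide.
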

\begin{proof}
In view of (\ref{sqrty}) with $k = 1$
we apply Duhamel (\ref{Duhamel inequality}) 
to $\sqrt{\e_1}$ and obtain
\begin{equation*}
\sqrt{\e_1}(s, x) \leq e^{s \Delta} \sqrt{\e_1}(0, x) + 
C_1 \int_0^s e^{(s-s^\prime) \Delta} \sqrt{\e_1}^3(s^\prime, x) \; ds^\prime.
\end{equation*}
Taking $L_x^\infty$ norms and squaring gives
\begin{equation}
\lVert \e_1(s) \rVert_{L_x^\infty} \lesssim \lVert e^{s \Delta} \sqrt{\e_1}(0) \rVert_{L_x^\infty}^2 +
\left\lVert \int_{0}^s e^{(s-s^\prime) \Delta} \sqrt{\e_1}^3(s^\prime, x) \; ds^\prime 
\right\rVert_{L_x^\infty}^2.
\label{Lis}
\end{equation}
We now integrate in $s$ and bound each of the terms of the right hand side separately.
Using the $p = \infty$ case of 
(\ref{Strichartz}) yields
\begin{equation*}
\int_{0}^{r} \lVert e^{s \Delta} \sqrt{\e_1}(0) \rVert_{L_x^\infty}^2 ds \lesssim \lVert \sqrt{\e_1}(0)\rVert_{L_x^2}^2 = 2E_0
\end{equation*}
independently of $r$.
By two applications of Minkowski's inequality, we have for the remaining term that
\begin{align*}
I_2 &:= \int_{0}^{r} \left\lVert \int_{0}^s e^{(s - s^\prime)\Delta} \sqrt{\e_1}^3(s^\prime, x) 
ds^\prime \right\rVert_{L_x^\infty}^2 ds \\
&\leq \int_{0}^{r} \left( \int_{0}^{r} \lVert e^{(s - s^\prime)\Delta}\sqrt{\e_1}^3(s^\prime, x) \rVert_{L_x^\infty} ds^\prime \right)^2 ds \\
&\leq \left( \int_{0}^{r} \left( \int_{0}^{r} \lVert e^{(s - s^\prime)\Delta} \sqrt{\e_1}^3(s^\prime, x) \rVert_{L_x^\infty}^2 ds
\right)^{\frac{1}{2}} ds^\prime \right)^2
\end{align*}
By (\ref{Strichartz}) with $p = \infty$, it follows that
\begin{equation*}
\int_{0}^{r} \lVert e^{(s - s^\prime)\Delta} \sqrt{\e_1}^3(s^\prime, x) \rVert_{L_x^\infty}^2 ds
\lesssim \lVert \sqrt{\e_1}^3(s^\prime)\rVert_{L_x^2}^2.
\end{equation*}
Thus
\begin{align*}
I_2 &\lesssim \left( \int_{0}^{r} \lVert \sqrt{\e_1}^3(s^\prime)\rVert_{L_x^2} ds^\prime \right)^2 \\
&\leq \left( \int_0^{r} \lVert \sqrt{\e_1}(s^\prime) \rVert_{L_x^\infty}^2 
\lVert \sqrt{\e_1}(s^\prime)\rVert_{L_x^2} ds^\prime \right)^2 \\
&\leq \sup_s \lVert \sqrt{\e_1}(s) \rVert_{L_x^2}^2 \cdot \left( \int_0^{r} \lVert \e_1(s^\prime)
\rVert_{L_x^\infty} ds^\prime\right)^2 \\
&\leq 2E_0 \left( \int_0^{r} \lVert \e_1(s^\prime)\rVert_{L_x^\infty} ds^\prime\right)^2,
\end{align*}
where the last inequality holds because of $\tB_1$.
Therefore
\begin{equation*}
\int_0^{r} \lVert \e_1(s) \rVert_{L_x^\infty} ds \lesssim E_0 + E_0 
\left( \int_0^{r} \lVert \e_1(s) \rVert_{L_x^\infty} ds \right)^2,
\end{equation*}
and so in view of (\ref{Lia}) it follows that for $E_0$ sufficiently small
\begin{equation*}
\int_0^{r} \lVert \e_1(s) \rVert_{L_x^\infty} ds \lesssim E_0
\end{equation*}
independently of $r$.
\end{proof}

\begin{proof}(Proof of Theorem \ref{SmallEnergyResult}).
Via a straightforward continuity argument, Theorem \ref{LWP} together with 
Propositions  \ref{A1 Implies Ak}--\ref{Newprop} imply the
global existence and uniqueness claims of Theorem \ref{SmallEnergyResult}
along with (\ref{ee1})--(\ref{ee3}) for $k = 1, 2, \ldots, N$ and (\ref{ee4}) for
$k = 1$.  As $N$ may be taken arbitrarily large, inequalities
(\ref{ee1})--(\ref{ee3}) in fact hold for all $k \geq 1$.  Convergence to $\phi(\infty)$
as $s \to \infty$ in the $C_x^\infty(\rr \to \cM)$ topology is an easy consequence
of the energy estimates (\ref{ee3}).

It remains to establish (\ref{ee4}) for $k > 1$.  These follow as a corollary
from (\ref{ee1}) and are shown in the following proposition.
\end{proof}

\begin{proposition}
Let $\phi$ be a heat flow with classical initial data.  Then for sufficiently small
initial energy $E_0$, we have
\begin{equation}
\int_0^\infty s^{k-1} \lVert \e_k(s) \rVert_{L_x^\infty(\rr)} ds \lesssim_k E_0,
\label{IntReg}
\end{equation}
for $k \geq 1$.
\label{Ek Integrated}
\end{proposition}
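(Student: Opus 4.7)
The case $k=1$ is exactly (\ref{Li}) and is already furnished by Proposition \ref{Newprop}; I focus on $k \geq 2$. The plan is to apply the pointwise Duhamel inequality (\ref{Duhamel inequality}) to $\sqrt{\e_k}$ starting from the \emph{intermediate} time $s/2$ rather than from the initial time, which sidesteps the fact that $\bE_k(0)$ is not controlled by $E_0$ alone when $k \geq 2$. This writes $\sqrt{\e_k}(s)$ as a linear piece $e^{(s/2)\Delta}\sqrt{\e_k}(s/2)$ plus a Duhamel integral from $s/2$ to $s$ driven by the nonlinearity $F_k := \sum_{j=3}^{k+2}\sum_{a_1+\cdots+a_j=k+2}(\e_{a_1}\cdots\e_{a_j})^{1/2}$ appearing in (\ref{sqrty}). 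I then take $L_x^\infty$-norms, square, multiply by $s^{k-1}$, and integrate in $s \in (0,\infty)$, producing a linear and a nonlinear contribution to the desired quantity.

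For the linear part, parabolic regularity (\ref{Young}) gives $\lVert e^{(s/2)\Delta}\sqrt{\e_k}(s/2)\rVert_{L_x^\infty}^2 \lesssim s^{-1}\bE_k(s/2)$, so after the obvious change of variable the contribution is bounded by $\int_0^\infty \sigma^{k-2}\bE_k(\sigma)\,d\sigma$. This is $\lesssim_k E_0$ by (\ref{ee1}) applied with $k$ replaced by $k-1$, which is legal precisely because $k \geq 2$.

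For the nonlinear part, parabolic regularity from $L_x^2$ to $L_x^\infty$ followed by Cauchy--Schwarz in $s'$ yields
\begin{equation*}
\lVert \int_{s/2}^s e^{(s-s')\Delta}F_k(s')\,ds' \rVert_{L_x^\infty}^2 \lesssim s^{1/2}\int_{s/2}^s (s-s')^{-1/2}\lVert F_k(s')\rVert_{L_x^2}^2\,ds'.
\end{equation*}
Multiplying by $s^{k-1}$ and switching the order of integration (noting that $s\in[s',2s']$) absorbs the weight into a factor $\sim (s')^k$, reducing the task to bounding $\int_0^\infty s^k \lVert F_k(s)\rVert_{L_x^2}^2\,ds$ by a power of $E_0$. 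Expanding $\lVert F_k\rVert_{L_x^2}^2 \lesssim \sum \int_{\rr}\e_{a_1}\cdots\e_{a_j}\,dx$, I estimate each product via H\"older by placing $j-1$ factors in $L_x^\infty$---where (\ref{ee3}) gives $\lVert \e_{a_i}\rVert_{L_x^\infty} \lesssim E_0\, s^{-a_i}$---and the remaining one in $L_x^1$.

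The main obstacle lies in this H\"older distribution. When some $a_{i_0} \geq 2$ is available, choosing it for the $L_x^1$-slot leaves $\int_0^\infty s^{a_{i_0}-2}\bE_{a_{i_0}}(s)\,ds \lesssim E_0$ via (\ref{ee1}) at index $a_{i_0}-1$, yielding a contribution of size $E_0^j$ per term. The exceptional case $j=k+2$ forces every $a_i = 1$, and no such index exists; here I instead invoke the already-proven base case (\ref{Li}): the bound $\int_{\rr}\e_1^{k+2}dx \leq \lVert \e_1\rVert_{L_x^\infty}^{k+1}\bE_1$, together with $\tB_1$ (giving $\bE_1 \lesssim E_0$) and the pointwise bound (\ref{ee3}) to absorb $k$ of the $L^\infty$ factors, leaves one copy of $\lVert \e_1\rVert_{L_x^\infty}$ to be integrated against (\ref{Li}), producing a contribution $\lesssim E_0^{k+2}$. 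Since every term is of order $E_0^j$ with $j \geq 3$, taking $E_0$ sufficiently small closes the estimate $\int_0^\infty s^{k-1}\lVert \e_k(s)\rVert_{L_x^\infty}\,ds \lesssim_k E_0$.
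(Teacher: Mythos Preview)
Your argument is correct, but it takes a different route from the paper's. You run the Duhamel inequality (\ref{Duhamel inequality}) from time $s/2$, bound the linear piece by $s^{-1}\bE_k(s/2)$ via (\ref{Young}), and control the Duhamel tail by reducing to $\int_0^\infty s^k\lVert F_k(s)\rVert_{L_x^2}^2\,ds$, which you then handle termwise with (\ref{ee1}), (\ref{ee3}), and the $k=1$ case (\ref{Li}). The case split on whether some $a_{i_0}\geq 2$ is the right way to cope with the fact that (\ref{ee1}) only starts at index $1$. By contrast, the paper avoids Duhamel entirely for $k\geq 2$: it combines the Gagliardo--Nirenberg inequalities (\ref{GN4}), (\ref{GN5}) with the diamagnetic inequality (\ref{diamagnetic}) to get the pointwise-in-$s$ bound
\[
\lVert \sqrt{\e_k}(s)\rVert_{L_x^\infty}\lesssim \bE_k(s)^{1/6}\,\bE_{k+1}(s)^{1/6}\,\bE_{k+2}(s)^{1/6},
\]
squares, multiplies by $s^{k-1}$, and closes with a single three-way H\"older against (\ref{ee1}) at indices $k-1,k,k+1$. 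The paper's argument is considerably shorter and uses only the $L^2$ energy hierarchy (\ref{ee1}); your approach, while longer, stays entirely within the parabolic-regularity toolkit already used in Propositions~\ref{B to the C} and~\ref{Newprop}, and in particular does not need the Gagliardo--Nirenberg inequalities at all.
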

\begin{proof}

We have already established the $k = 1$ case.  For $k > 1$,
the proof procedes as in \cite{T4} and for convenience we reproduce the
short argument here.

The Gagliardo-Nirenberg inequality (\ref{GN4}) implies
\begin{equation*}
\lVert \sqrt{\e_k}(s) \rVert_{L_x^\infty}
\lesssim
\lVert \sqrt{\e_k}(s) \rVert_{L_x^2}^{1/3}
\lVert \partial_x \sqrt{\e_k}(s) \rVert_{L_x^4}^{2/3}
\end{equation*}
and so in view of the diamagnetic inequality (\ref{diamagnetic})
it holds that
\begin{equation*}
\lVert \sqrt{\e_k}(s) \rVert_{L_x^\infty}
\lesssim
\lVert \sqrt{\e_k}(s) \rVert_{L_x^2}^{1/3}
\lVert \sqrt{\e_{k+1}}(s) \rVert_{L_x^4}^{2/3}.
\end{equation*}
From the Gagliardo-Nirenberg inequality (\ref{GN5}) we have
\begin{equation*}
\lVert \sqrt{\e_{k+1}}(s) \rVert_{L_x^4}
\lesssim
\lVert \sqrt{\e_{k+1}}(s) \rVert_{L_x^2}^{1/2}
\lVert \partial_x \sqrt{\e_{k+1}}(s) \rVert_{L_x^2}^{1/2}.
\end{equation*}
Combining the above inequalities yields
\begin{equation}
\lVert \sqrt{\e_k}(s) \rVert_{L_x^\infty}
\lesssim
\lVert \sqrt{\e_{k}}(s) \rVert_{L_x^2}^{1/3}
\lVert \sqrt{\e_{k+1}}(s) \rVert_{L_x^2}^{1/3}
\lVert \sqrt{\e_{k+2}}(s) \rVert_{L_x^2}^{1/3}.
\label{Inteprep}
\end{equation}
Using (\ref{Inteprep}) in (\ref{IntReg}), we conclude with
H\"older's inequality and (\ref{ee1}) and (\ref{ee2}).
\end{proof}

\subsection{A controlling quantity}
Define over any time interval $I$ the following ``norm'':
\begin{equation*}
\lVert \phi \rVert_{\Sn(I)} := \lVert \lvert \partial_x \phi \rvert_{\phi^*h} \rVert_{L_{s,x}^4(I \times \rr)}.
\end{equation*}
In terms of energy densities, we may equivalently write
\begin{align*}
\lVert \phi \rVert_{\Sn(I)} &= \lVert \sqrt{\e_1} \rVert_{L_{s,x}^4(I \times \rr)} \\
&= \lVert \e_1 \rVert_{L_{s,x}^2(I \times \rr)}^{\frac{1}{2}}.
\end{align*}

Strictly speaking, $\Sn$ is not a norm or even a seminorm on smooth maps
$I \times \rr \to \cM$, but can be interpreted as an intrinsic nonlinear analogue of the
$\dot{W}_x^{1, 4}$ seminorm.
Abusing terminology, we refer to $\Sn$ as the $\Sn$-norm.
Note that the $\Sn$-norm is critical, as it is invariant with respect to the scaling (\ref{dil sym}).  
It is also clear that the $\Sn$-norm is monotonic in the sense that 
it is non-decreasing in time.

In the following let $\phi$ be a partial heat flow
whose initial data $\phi(0) = \phi_0$ has
initial energy $E_0 := E(\phi_0)$.
In this subsection we shall show that the various energy quantities
comprising the left-hand-sides of (\ref{ee1})--(\ref{ee4})
are all controlled by the $\Sn$-norm of $\phi$.
Throughout we use
\begin{equation*}
\Sn_{[0, r)} := \lVert \phi \rVert_{\Sn([0, r))}
\end{equation*}
as a notational shorthand.
Note that in this subsection we do not require $E_0 < \Ec$: the results are valid
so long as the $\Sn$-norm is finite.
However, in applications in this paper we shall take $E_0 < \Ec$ and
in $\S \ref{S:Minimal blowup solutions}$ we prove finiteness
of the $\Sn$-norm under this assumption.
For simplicity the following propositions are stated assuming the initial data
is given at initial time $s = 0$, though the proofs are valid for any
initial time $s = s_0 \geq 0$ we might choose.

\begin{proposition}
Let $\phi$ be a heat flow on $[0, r)$ with initial energy $E_0$.
Then
\begin{equation*}
\int_0^{r} \bE_2(s) \ds \lesssim_{E_0, \Sn_{[0, r)}}  1.
\end{equation*}
\label{e01s1}
\end{proposition}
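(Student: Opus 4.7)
The plan is to apply the integrated Bochner-Weitzenb\"ock inequality (\ref{intEineq}) specialized to $k = 1$. For this value, the summation on the right-hand side has only $j = 3$, and the only multi-index is $a_1 = a_2 = a_3 = 1$, so the nonlinearity collapses to $(\e_1 \e_1 \e_1 \e_1)^{1/2} = \e_1^2$. Taking $r = 0$ and letting the upper endpoint be any $s \in [0, r)$, we obtain
\begin{equation*}
\bE_1(s) - \bE_1(0) + 2\int_0^s \bE_2(s') \, ds' \lesssim \int_0^s \int_\rr \e_1^2(s', x) \, dx \, ds'.
\end{equation*}

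The key observation is that the right-hand side is precisely $\lVert \e_1 \rVert_{L^2_{s,x}([0,s) \times \rr)}^2 = \Sn_{[0,s)}^4$, by the equivalent form of the $\Sn$-norm noted in the paper. On the left-hand side, we have $\bE_1(0) = 2E_0$ by definition of the initial energy, and $\bE_1(s) \geq 0$ pointwise in $s$ (in fact, by the energy monotonicity of Proposition \ref{EnergyMono}, $\bE_1$ is non-increasing, so $\bE_1(s) \leq 2E_0$; we only need non-negativity here).

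Dropping $\bE_1(s)$ from the left-hand side and sending $s \to r$ then yields
\begin{equation*}
2 \int_0^r \bE_2(s) \, ds \lesssim 2E_0 + \Sn_{[0, r)}^4,
\end{equation*}
which is exactly the desired bound $\int_0^r \bE_2(s) \, ds \lesssim_{E_0, \Sn_{[0,r)}} 1$.

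There is no real obstacle in this argument: the proposition is a direct consequence of the $k=1$ Bochner-Weitzenb\"ock identity, which has the especially favorable form $\partial_s \e_1 - \Delta \e_1 + 2 \e_2 = \langle \cR(\phi)(\partial_i \phi, \partial_j \phi)\partial_j \phi, \partial_i \phi \rangle_\ph$, so that the curvature term is quartic in $\partial_x \phi$ and matches the $L^4_{s,x}$ integrability built into the $\Sn$-norm. The only minor point is that the implied constant absorbs a dependence on the uniform bound for $\cR$ coming from the bounded geometry assumption, which is already folded into the $\lesssim$ notation in (\ref{intEineq}).
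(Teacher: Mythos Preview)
Your proof is correct and follows essentially the same route as the paper: specialize (\ref{intEineq}) to $k=1$, identify the right-hand side as $\Sn_{[0,r)}^4$, and absorb the $\bE_1$ terms using $\bE_1(0)=2E_0$ and $\bE_1(s)\geq 0$. The paper phrases the last step as ``by monotonicity of the energy,'' whereas you correctly observe that mere non-negativity of $\bE_1(s)$ already suffices; this is a cosmetic difference only.
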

\begin{proof}
Using (\ref{intEineq}) with $k = 1$, we have
\begin{equation*}
2\int_0^{r} \bE_2(s)\ds + \bE_1(r) - 2E_0 \lesssim \lVert \phi \rVert_{\Sn([0, r))}^4.
\end{equation*}
By monotonicity of the energy $\bE_1/2$ the claim follows.
\end{proof}

\begin{proposition}
Let $\phi$ be a heat flow on $[0, r)$ with initial energy $E_0$.  
Then
\begin{equation*}
\lVert \e_1 \rVert_{L_s^1 L_x^\infty([0, r) \times \rr)} 
\lesssim_{E_0, \Sn_{[0, r)}} 1.
\end{equation*}
\label{e01s2}
\end{proposition}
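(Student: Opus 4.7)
The plan is to adapt the bootstrap argument of Proposition \ref{Newprop}, replacing the smallness of $E_0$ with smallness of the controlling $\Sn$-norm on sufficiently small time sub-intervals. Since $\|\phi\|_{\Sn([0,r))}^4 = \int_0^r \|\e_1\|_{L_x^2(\rr)}^2 \ds$ is finite by hypothesis, for any $\varepsilon > 0$ I can partition $[0, r)$ into at most $\lceil \Sn_{[0, r)}^4 / \varepsilon \rceil + 1$ consecutive sub-intervals $[s_j, s_{j+1})$ on each of which $\|\phi\|_{\Sn([s_j, s_{j+1}))}^4 \leq \varepsilon$, where $\varepsilon$ will be chosen as a small universal constant.

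On each sub-interval I would repeat the argument of Proposition \ref{Newprop} starting at time $s_j$. Energy monotonicity (Proposition \ref{EnergyMono}) guarantees $E(\phi(s_j)) \leq E_0$, so that applying the Duhamel inequality (\ref{Duhamel inequality}) to $\sqrt{\e_1}$, taking $L_x^\infty$ norms, squaring, and integrating in $s$ yields a linear contribution bounded by $O(E_0)$ via parabolic Strichartz (\ref{Strichartz}). The crucial departure from Proposition \ref{Newprop} is in handling the nonlinear term. Rather than using the H\"older bound $\|\sqrt{\e_1}^{\,3}\|_{L_x^2} \leq \|\sqrt{\e_1}\|_{L_x^\infty}^2 \|\sqrt{\e_1}\|_{L_x^2}$, which forces the bootstrap to rely on smallness of $E_0$, I would instead split as
$$\|\sqrt{\e_1}^{\,3}\|_{L_x^2} \leq \|\sqrt{\e_1}\|_{L_x^\infty}\|\e_1\|_{L_x^2} = \|\e_1\|_{L_x^\infty}^{1/2}\|\e_1\|_{L_x^2},$$
and then apply Cauchy-Schwarz in $s'$ to produce a factor $\|\phi\|_{\Sn([s_j, s_{j+1}))}^4 \leq \varepsilon$ multiplying $X_j := \int_{s_j}^{s_{j+1}} \|\e_1\|_{L_x^\infty} \ds$. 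The net effect is a bootstrap inequality of the shape $X_j \leq C(E_0 + \varepsilon X_j)$, which closes once $\varepsilon$ is chosen small enough to absorb the last term.

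The main obstacle, mirroring the one in Proposition \ref{Newprop}, is ensuring the absorption step is legitimate, i.e.\ that $X_j$ is a priori finite so that $\varepsilon X_j$ can be moved to the left. To handle this I would run the argument on truncated sub-intervals $[s_j, t)$ with $t < s_{j+1}$, where smoothness from Theorem \ref{LWP} gives a finite but non-uniform a priori bound on $X_j(t) := \int_{s_j}^{t} \|\e_1\|_{L_x^\infty} \ds$, close the bootstrap to obtain a bound $X_j(t) \lesssim E_0$ that is independent of $t$, and pass to the limit $t \to s_{j+1}$. Summing over the finitely many sub-intervals produces the total bound $\int_0^r \|\e_1(s)\|_{L_x^\infty} \ds \lesssim_{E_0, \Sn_{[0, r)}} 1$, as required.
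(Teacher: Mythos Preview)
Your proposal is correct and follows essentially the same route as the paper: Duhamel plus parabolic Strichartz for the linear piece, the H\"older split $\|\sqrt{\e_1}^{\,3}\|_{L_x^2}\le\|\e_1\|_{L_x^\infty}^{1/2}\|\e_1\|_{L_x^2}$ followed by Cauchy--Schwarz in $s'$ for the nonlinear piece, and a subdivision of $[0,r)$ into finitely many sub-intervals on which the $\Sn$-norm is small enough to absorb. Your explicit handling of the a~priori finiteness of $X_j$ via truncation to $[s_j,t)$ is a detail the paper leaves implicit.
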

\begin{proof}
Due to Duhamel (\ref{Duhamel inequality})
we have for all $s > r_0 \geq 0$ that
\begin{equation}
\lVert \e_1(s) \rVert_{L_x^\infty} 
\lesssim
\lVert e^{s\Delta} \sqrt{\e_1}(r_0)\rVert_{L_x^\infty}^2 +
\left\lVert \int_{r_0}^s e^{(s - s^\prime)\Delta} \sqrt{\e_1}^3(s^\prime, x) ds^\prime \right\rVert_{L_x^\infty}^2.
\label{sqdu}
\end{equation}
We upper-bound the nonlinear term using
\begin{equation*}
\left\lVert \int_{r_0}^s e^{(s - s^\prime)\Delta} 
\sqrt{\e_1}^3(s^\prime, x) ds^\prime \right\rVert_{L_x^\infty}^2
\leq
\left( \int_{r_0}^s \lVert e^{(s - s^\prime)\Delta} \sqrt{\e_1}^3(s^\prime, x) \rVert_{L_x^\infty} ds^\prime \right)^2.
\end{equation*}
Integrating in $s$ from $r_0 \geq 0$ to $t \leq r$, we have
by Minkowski's inequality that
\begin{align}
\int_{r_0}^{t} 
&\left( \int_{r_0}^s \lVert e^{(s - s^\prime)\Delta} \sqrt{\e_1}^3(s^\prime, x) \rVert_{L_x^\infty} ds^\prime \right)^2 ds \nonumber \\
&\leq
\left( \int_{r_0}^{t} 
\left( \int_{s^\prime}^{t}
\lVert e^{(s - s^\prime) \Delta} \sqrt{\e_1}^3 (s^\prime, x) \rVert_{L_x^\infty}^2 ds 
\right)^{1/2} ds^\prime
\right)^2.
\label{MI}
\end{align}
From the 
(\ref{Strichartz}) with $p = \infty$ it follows that
\begin{equation*}
 \int_{s^\prime}^{t}
\lVert e^{(s - s^\prime) \Delta} \sqrt{\e_1}^3 (s^\prime, x) \rVert_{L_x^\infty}^2 ds 
\lesssim
\lVert \sqrt{\e_1}^3(s^\prime) \rVert_{L_x^2}^2,
\end{equation*}
and so the right hand side of (\ref{MI}) is controlled by
\begin{align*}
\left( \int_{r_0}^{t} \lVert \sqrt{\e_1}^3(s) \rVert_{L_x^2} ds \right)^2
&\lesssim
\left( \int_{r_0}^{t} \lVert \sqrt{\e_1}(s) \rVert_{L_x^\infty} \lVert \e_1(s) \rVert_{L_x^2} ds \right)^2 \\
&\lesssim
\left( \left( \int_{r_0}^{t} \lVert \sqrt{\e_1}(s) \rVert_{L_x^\infty}^2 ds \right)^{1/2}
\left( \int_{r_0}^{t} \lVert \e_1(s) \rVert_{L_x^2}^2 ds \right)^{1/2} \right)^2 \\
&= \int_{r_0}^{t} \lVert \e_1(s) \rVert_{L_x^\infty} \ds \cdot
\int_{r_0}^{t} \lVert \e_1(s) \rVert_{L_x^2}^2 ds.
\end{align*}

By using (\ref{Strichartz}) with $p = \infty$
to bound the linear term appearing in (\ref{sqdu}), we conclude
\begin{equation*}
\int_{r_0}^{t} \lVert \e_1(s) \rVert_{L_x^\infty} ds \lesssim E_0
+
\int_{r_0}^{t} \lVert \e_1(s) \rVert_{L_x^\infty} \ds \cdot
\int_{r_0}^{t} \lVert \e_1(s) \rVert_{L_x^2}^2 ds.
\end{equation*}
The proposition now follows from splitting up the time interval $[0, r)$
into $O((\Sn_{[0, r)})^2)$ intervals $I_j$
on which
$\lVert \e_1 \rVert_{L_s^2 L_x^2 (I_j \times \rr)}$ is sufficiently small.
\end{proof}

\begin{proposition}
Let $\phi$ be a heat flow on $[0, r)$ with initial energy $E_0$.
Then
\begin{equation*}
\sup_{ \substack{0 < s < r \\ x\in \rr}} s \e_1(s, x) \lesssim E_0
\exp(2 \lVert \e_1 \rVert_{L_s^1L_x^\infty([0, r) \times \rr)}).
\end{equation*}
\end{proposition}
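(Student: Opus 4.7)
The plan is to reduce \eqref{eineq} for $k = 1$ to a linear scalar heat inequality with time-dependent potential, absorb the potential by an integrating factor, and then apply parabolic smoothing to the initial datum $\e_1(0)$, whose $L_x^1$-norm is $2 E_0$.

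Concretely, \eqref{eineq} for $k = 1$ has just one term on the right (the summand $j = 3$, $a_1 = a_2 = a_3 = 1$), so after discarding the good contribution $-2 \e_2 \leq 0$ it reads
\[
\partial_s \e_1 - \Delta \e_1 \leq 2\, \e_1 \cdot \e_1,
\]
where I have fixed the constant as $2$ to match the statement (the precise value is dictated by the bounded-geometry bound on the curvature term arising in the $k=1$ Bochner-Weitzenb\"ock identity). Bounding the rightmost factor by $\lVert \e_1(s) \rVert_{L_x^\infty}$ yields the linear inequality $\partial_s \e_1 - \Delta \e_1 \leq 2 \lVert \e_1(s) \rVert_{L_x^\infty} \e_1$, whose potential depends only on $s$.

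Introduce the integrating factor $\mu(s) := \exp\bigl(-2 \int_0^s \lVert \e_1(s')\rVert_{L_x^\infty(\rr)}\, ds'\bigr)$ and set $g(s, x) := \mu(s)\, \e_1(s, x)$. A direct computation gives
\[
\partial_s g - \Delta g = \mu \,(\partial_s \e_1 - \Delta \e_1) - 2 \lVert \e_1(s) \rVert_{L_x^\infty}\, g \leq 0
\]
distributionally. Then positivity of the heat kernel, exactly as in the derivation of \eqref{Duhamel inequality}, gives $g(s) \leq e^{s\Delta} g(0)$, that is,
\[
\e_1(s, x) \leq \exp\bigl(2 \lVert \e_1 \rVert_{L_s^1 L_x^\infty([0,r)\times \rr)}\bigr)\,\bigl(e^{s\Delta}\e_1(0)\bigr)(x).
\]
Finally, parabolic smoothing \eqref{Young} with $(p, q, k) = (1, \infty, 0)$ gives $\lVert e^{s\Delta}\e_1(0)\rVert_{L_x^\infty} \lesssim s^{-1} \lVert \e_1(0)\rVert_{L_x^1} = 2 s^{-1} E_0$; multiplying by $s$ and taking the supremum over $x$ produces the advertised bound.

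The main obstacle is the rigorous justification of the comparison step: the chain rule is being applied to $g = \mu \e_1$ with $\mu$ only absolutely continuous in $s$, and the inequality $\partial_s g - \Delta g \leq 0$ needs to hold in a sense strong enough to invoke the heat-kernel comparison used to derive \eqref{Duhamel inequality}. This is handled just as in the proof of the diamagnetic inequality: work first with a regularization such as $\epsilon^2 + \e_1$, run the argument there, and pass to the limit $\epsilon \to 0$. Since $\mu$ depends only on $s$, it commutes with $\Delta$ and introduces no additional complication.
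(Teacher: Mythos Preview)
Your argument is correct, but it follows a different path from the paper's. The paper works with $\sqrt{\e_1}$ rather than $\e_1$: it invokes the Duhamel-type inequality \eqref{Duhamel inequality} for $\sqrt{\e_1}$, applies the $L_x^2 \to L_x^\infty$ parabolic regularity to the linear piece to produce the factor $s^{-1/2} E_0^{1/2}$, bounds the nonlinear piece by $\int_0^s \lVert \sqrt{\e_1}(s')\rVert_{L_x^\infty}\lVert \e_1(s')\rVert_{L_x^\infty}\,ds'$, and closes with Gronwall; squaring then yields the stated bound (and is the source of the ``$2$'' in the exponent). Your route stays with $\e_1$, absorbs the potential via an integrating factor to obtain a genuine subsolution of the linear heat equation, and uses the $L_x^1 \to L_x^\infty$ smoothing on $\e_1(0)$. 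This is slightly cleaner here---it sidesteps the square-root machinery and the Gronwall step with the non-monotone forcing $s^{-1/2}$---while the paper's approach has the virtue of fitting the uniform $\sqrt{\e_k}$ scheme used throughout \S\ref{S:Small Energy Estimates}.

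Two minor remarks. First, the constant ``$2$'' you fix in $\partial_s \e_1 - \Delta \e_1 \leq 2\e_1^2$ is really the curvature-dependent constant from the Bochner identity; your argument produces $\exp(C\lVert \e_1\rVert_{L_s^1 L_x^\infty})$ for that $C$, which is fine since the proposition only claims $\lesssim$ (the paper's ``$2$'' arises from squaring, not from curvature, and is equally schematic). Second, your rigor concern is overstated: for a smooth heat flow $\e_1$ is smooth, so $s \mapsto \lVert \e_1(s)\rVert_{L_x^\infty}$ is locally Lipschitz and $\mu$ is absolutely continuous with $\mu' = -2\lVert \e_1\rVert_{L_x^\infty}\,\mu$ a.e.; the subsolution inequality $\partial_s g - \Delta g \leq 0$ then holds in the distributional sense needed for the heat-kernel comparison, with no $\epsilon$-regularization required.
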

\begin{proof}
Duhamel (\ref{Duhamel inequality}) and parabolic regularity (\ref{Young}) imply
\begin{equation*}
\lVert \sqrt{\e_1}(r) \rVert_{L_x^\infty} \lesssim r^{-1/2}E_0^{1/2} + 
\int_0^r \lVert e^{(r - s)\Delta} \sqrt{\e_1}^3(s) \rVert_{L_x^\infty} \ds.
\end{equation*}
By Young's inequality,
\begin{equation*}
\lVert \sqrt{\e_1}(r) \rVert_{L_x^\infty} \lesssim r^{-1/2}E_0^{1/2} + 
\int_0^r \lVert \sqrt{\e_1}(s) \rVert_{L_x^\infty} 
\cdot \lVert \e_1(s) \rVert_{L_x^\infty} \ds,
\end{equation*}
and so by Gronwall's inequality,
\begin{equation*}
\lVert \sqrt{\e_1}(r) \rVert_{L_x^\infty} \lesssim r^{-1/2} E_0^{1/2} \exp\left(
\int_0^r \lVert \e_1(s)  \rVert_{L_x^\infty} \ds \right).
\end{equation*}
\end{proof}

\begin{lemma}
Let $\phi: I \times \rr \to \cM$ be a heat flow with 
classical initial data $\phi_0$ 
with energy $E_0 := E(\phi_0)$ and
suppose that $\lVert \phi \rVert_{\Sn(I)} < \infty$.  
Setting $S_I := \lVert \phi \rVert_{\Sn(I)}$ for short, we
have for each $k \geq 1$ that
\begin{align}
\int_I \int_\rr s^{k-1} \e_{k+1}(s, x) \dx ds 
&\lesssim_{E_0, k, S_I} 1, \label{e0ks1} \\
\sup_{s \in I} s^{k-1} \int_\rr \e_k(s, x) \dx 
&\lesssim_{E_0, k, S_I} 1, \label{e0ks2} \\
\sup_{\substack{ s \in I \\ x \in \rr}} s^k \e_k(s, x) 
&\lesssim_{E_0, k, S_I} 1, \label{e0ks3} \\
\int_I s^{k-1} \sup_{x \in \rr} \e_k(s, x) \ds 
&\lesssim_{E_0, k, S_I} 1 \label{e0ks4}.
\end{align}
\label{Preliminary Estimates}
\end{lemma}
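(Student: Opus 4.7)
The plan is to prove Lemma \ref{Preliminary Estimates} by induction on $k$, using the three immediately preceding propositions to handle the base case and then mimicking the inductive structure of Propositions \ref{A1 Implies Ak}--\ref{B to the C} and \ref{Ek Integrated}. The key difference from the small-energy setting of \S \ref{S:Small energy} is that the implied constants will be allowed to depend on $E_0$, $k$, and $S_I$, so we no longer need to run a self-referential bootstrap that exploits smallness of $E_0$; this means the inductive arguments can be made quite direct once the $k=1$ case is established.

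For the base case $k=1$: inequality (\ref{e0ks1}) is Proposition \ref{e01s1}, inequality (\ref{e0ks2}) is the standard energy-monotonicity bound $\bE_1(s)\le 2E_0$, inequality (\ref{e0ks4}) is Proposition \ref{e01s2}, and inequality (\ref{e0ks3}) is the (unlabeled) proposition stated immediately above this lemma.

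For the inductive step at level $k \ge 2$, first establish (\ref{e0ks1}) and (\ref{e0ks2}). Using the inductive hypothesis (\ref{e0ks3}) at levels $j < k$ to convert the pointwise factors $\e_j$ on the right of (\ref{Eineq}) into $s^{-j}$ (with implied constant depending on $E_0,k,S_I$), we obtain the analog of (\ref{AGM}):
\[
\partial_s \bE_k + 2\bE_{k+1} \lesssim_{E_0,k,S_I} \sum_{a=2}^{k} s^{a-k-1}\bE_a.
\]
Multiplying by $s^{k-1}\rho(s/s_0)$ for a smooth cutoff $\rho$ supported in $[1/2,2]$, integrating over $I$, and summing dyadically in $s_0$ --- exactly as in the proofs of Propositions \ref{A1 Implies Ak} and \ref{Aks Imply Bk}, but with the inductive hypotheses (\ref{e0ks1})--(\ref{e0ks2}) at levels $2,\ldots,k-1$ bounding the right-hand side --- yields (\ref{e0ks1}) and (\ref{e0ks2}) at level $k$. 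Next, (\ref{e0ks3}) at level $k$ follows from applying the Duhamel inequality (\ref{Duhamel inequality}) to $\sqrt{\e_k}$ on $[s_0/2,s_0]$ and taking $L_x^\infty$ norms: parabolic regularity (\ref{Young}) combined with the freshly established (\ref{e0ks2}) controls the linear piece, while the nonlinear piece is bounded using the inductive (\ref{e0ks3}) at lower levels, reproducing Proposition \ref{B to the C}.

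The estimate (\ref{e0ks4}) at level $k\ge 2$ is extracted after completing the induction for (\ref{e0ks1})--(\ref{e0ks3}) across all $k$. The Gagliardo-Nirenberg inequalities (\ref{GN4}) and (\ref{GN5}), combined with the diamagnetic inequality (\ref{diamagnetic}), yield
\[
\lVert \e_k(s)\rVert_{L_x^\infty} \lesssim \bE_k(s)^{1/3}\,\bE_{k+1}(s)^{1/3}\,\bE_{k+2}(s)^{1/3}.
\]
Distributing the weight as $s^{k-1}=s^{(k-2)/3}\cdot s^{(k-1)/3}\cdot s^{k/3}$ and applying H\"older with exponent $3$ bounds $\int_I s^{k-1}\lVert \e_k(s)\rVert_{L_x^\infty}\,ds$ by the geometric mean of $\int_I s^{k-2}\bE_k$, $\int_I s^{k-1}\bE_{k+1}$, and $\int_I s^{k}\bE_{k+2}$, each controlled by (\ref{e0ks1}) at levels $k-1$, $k$, and $k+1$ respectively. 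The one technical point worth flagging is the need to stage the induction in two passes --- first running (\ref{e0ks1})--(\ref{e0ks3}) at every level, then deducing (\ref{e0ks4}) at level $k$ from (\ref{e0ks1}) at level $k+1$ --- so as to avoid a circular dependence in the final estimate.
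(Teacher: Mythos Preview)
Your proposal is correct and follows essentially the same approach as the paper: induction on $k$, with the base case handled by Propositions \ref{e01s1}, \ref{EnergyMono}, \ref{e01s2} and the Gronwall-type proposition preceding the lemma, and the inductive step reproducing Propositions \ref{A1 Implies Ak}, \ref{Aks Imply Bk}, \ref{B to the C}, and \ref{Ek Integrated} with constants now allowed to depend on $E_0$ and $S_I$. Your observation about staging the induction in two passes (first establishing (\ref{e0ks1})--(\ref{e0ks3}) for all $k$, then deducing (\ref{e0ks4}) from (\ref{e0ks1}) at levels $k-1$, $k$, $k+1$) is a genuine clarification, since the paper's inductive framing glosses over the fact that the argument of Proposition \ref{Ek Integrated} requires estimates at levels above $k$.
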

\begin{proof}
The proof is by induction.
The base case ($k = 1$) of (\ref{e0ks1}) follows from Proposition \ref{e01s1},
that of (\ref{e0ks2}) from Proposition \ref{EnergyMono},
that of (\ref{e0ks3}) from the argument of Proposition \ref{B to the C}, 
and finally that of (\ref{e0ks4}) from Proposition \ref{e01s2}. 

Now assume $k > 1$ and that the inequalities of the lemma have been proven
for all smaller $k$.  Then we have at our disposal (\ref{AGM}) 
as in \S \ref{S:Small Energy Estimates}.
Repeating the argument of Proposition \ref{A1 Implies Ak} implies
(\ref{e0ks1}).  The proof of (\ref{e0ks2}) follows from the argument used in 
Proposition \ref{Aks Imply Bk}.  Inequalities (\ref{e0ks3}) follow from
Proposition \ref{B to the C}.  Finally, we conclude (\ref{e0ks4}) using
the argument of Proposition \ref{Ek Integrated}.
\end{proof}

\section{Local energy estimates} \label{S:Energy Estimates}
To prepare for the minimal blow-up solution argument of the next section
we now establish some local energy estimates.
In view of this and for the sake of convenience we introduce the new notation
\begin{equation*}
E(\phi(s) \upharpoonleft U) := \frac{1}{2} \int_U \e_1(s) \dx,
\end{equation*}
and similarly,
\begin{equation*}
\bE_k(\phi(s) \upharpoonleft U) := \int_U \e_k(s) \dx,
\end{equation*}
where $U \subset \rr$.  
Usually we take $U = B(x, R)$, that is, equal to an open ball 
in $\rr$ centered at the point $x$ and of radius $R$.  
The following two lemmas are intrinsic formulations of
\cite[Lemmas 3.1, 3.2]{St85}.
\begin{lemma}(Energy localization).
Let $\phi$ be a heat flow on $[0, S] \times \rr$.  
Then for any $s \in [0, S]$ and $R > 0$ we have
\begin{align}
\lVert \e_1 \rVert^2_{L_{s,x}^2([0, s] \times \rr)} \lesssim&
\; \lVert \bE_1(\phi \upharpoonleft B(\cdot, R)) 
\rVert_{L_{s,x}^\infty([0, s] \times \rr)} \times \nonumber \\
&\times
\left(
\lVert \e_2 \rVert_{L_{s,x}^1 ([0, s] \times \rr)} + \frac{1}{R^2}
\lVert \e_1 \rVert_{L_{s,x}^1 ([0, s] \times \rr)}
\right).
\label{EnergyLocalization}
\end{align}
\label{EL}
\end{lemma}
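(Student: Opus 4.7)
The plan is to prove this as a pointwise-in-time local Sobolev estimate, then integrate in $s$. The key observation is that after setting $u = \sqrt{\e_1}$ the diamagnetic inequality (\ref{diamagnetic}) gives $\lvert \partial_x u \rvert \leq \sqrt{\e_2}$, so the problem reduces to a scalar Gagliardo--Nirenberg estimate for a nonnegative $u \in L_x^2(\rr)$ with $\partial_x u \in L_x^2(\rr)$.

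First I would establish the following local variant of (\ref{GN5}): for any ball $B = B(x_0, R) \subset \rr$,
\begin{equation*}
\lVert u \rVert_{L_x^4(B)}^4 \lesssim \lVert u \rVert_{L_x^2(B)}^2 \left( \lVert \partial_x u \rVert_{L_x^2(B)}^2 + R^{-2} \lVert u \rVert_{L_x^2(B)}^2 \right).
\end{equation*}
This is a standard consequence of the scale-invariant form of Gagliardo--Nirenberg on a ball of radius $R$, obtained by combining Sobolev embedding with a Poincar\'e-type correction term handling the mean. Alternatively, one can rescale to $B(0,1)$ where it reduces to $\lVert u \rVert_{L_x^4}^2 \lesssim \lVert u \rVert_{L_x^2} \lVert u \rVert_{H_x^1}$ on the unit ball.

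Next I would cover $\rr$ by a collection of balls $\{B(x_\alpha, R)\}$ of radius $R$ with finite overlap (independent of $R$). Summing the local estimate over $\alpha$ and exploiting the bounded-overlap property yields the global-with-local-control estimate
\begin{equation*}
\int_\rr u^4 \dx \lesssim \Bigl( \sup_{x_0 \in \rr} \int_{B(x_0, R)} u^2 \dx \Bigr) \cdot \Bigl( \int_\rr \lvert \partial_x u \rvert^2 \dx + R^{-2} \int_\rr u^2 \dx \Bigr).
\end{equation*}
Setting $u(s, \cdot) = \sqrt{\e_1(s, \cdot)}$ and invoking the diamagnetic inequality $\lvert \partial_x \sqrt{\e_1} \rvert \leq \sqrt{\e_2}$, this becomes the pointwise-in-time bound
\begin{equation*}
\int_\rr \e_1^2(s) \dx \lesssim \Bigl( \sup_{x_0} \bE_1(\phi(s) \upharpoonleft B(x_0, R)) \Bigr) \cdot \Bigl( \bE_2(s) + R^{-2} \bE_1(s) \Bigr).
\end{equation*}

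Finally, integrating this inequality in $s$ from $0$ to the chosen time and pulling the $L_x^\infty$-in-$x_0$ supremum out to an $L_{s,x}^\infty$ supremum (which only enlarges the right-hand side) gives exactly (\ref{EnergyLocalization}). The main technical point to get right is the local Gagliardo--Nirenberg with the $R^{-2}$ correction term, whose scale-invariance is what allows the sharp $R$-dependence on the right-hand side; everything else is a direct application of the diamagnetic inequality and a bounded-overlap covering.
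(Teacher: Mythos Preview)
Your proposal is correct and follows essentially the same route as the paper: the paper proves the single-ball estimate (your local Gagliardo--Nirenberg inequality) explicitly via Poincar\'e's inequality together with the diamagnetic bound $\lvert \partial_x \sqrt{\e_1} \rvert \leq \sqrt{\e_2}$, and then deduces Lemma~\ref{EL} from it by exactly the bounded-overlap covering argument you describe. The only cosmetic difference is the order of operations---the paper integrates in $s$ before covering, whereas you cover first and then integrate---but the ingredients and structure are the same.
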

Lemma \ref{EL} follows as a corollary from the following result
via a simple covering argument as 
in \cite{St85}.

\begin{lemma}
Let $\phi$ be a heat flow on $[0, S] \times \rr$.  
Let $x \in \rr$, $R > 0$, set $B := B(x, R)$, and let $\chi_B$ be the indicator function
of $B$.
Then for any $s \in [0, S]$ and $R > 0$ we have
\begin{align*}
\lVert \e_1 \cdot \chi_B \rVert^2_{L_{s,x}^2([0, s] \times \rr)} \lesssim&
\; \lVert \bE_1(\phi \upharpoonleft B(x, R)) \rVert_{L_{s}^\infty([0, s])} \times \nonumber \\
&\times
\left(
\lVert \e_2 \cdot \chi_B \rVert_{L_{s,x}^1 ([0, s] \times \rr)} + \frac{1}{R^2} 
\lVert \e_1 \cdot \chi_B \rVert_{L_{s,x}^1 ([0, s] \times \rr)}
\right).
\end{align*}
\end{lemma}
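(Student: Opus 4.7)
The plan is to prove the pointwise-in-$s$ bound
\[
\int_{\rr} \e_1^2(s,\cdot)\, \chi_B \dx \lesssim \bE_1(\phi(s) \upharpoonleft B)\Bigl( \int_{\rr} \e_2(s,\cdot)\, \chi_B \dx + \tfrac{1}{R^2} \int_{\rr} \e_1(s,\cdot)\, \chi_B \dx\Bigr),
\]
and then time-integrate, pulling $\bE_1(\phi \upharpoonleft B)$ out as an $L^\infty_s$ norm. The main tool is the two-dimensional Gagliardo-Nirenberg inequality (\ref{GN5}), applied to $\sqrt{\e_1}$ with a suitable cutoff, combined with the diamagnetic inequality (\ref{diamagnetic}) to convert gradient-of-square-root terms into $\e_2$.

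First I would fix a smooth cutoff $\eta$ adapted to $B = B(x, R)$ with $\eta \equiv 1$ on $B$, supported in (a small enlargement of) $B$, and satisfying $\lvert \partial_x \eta \rvert \lesssim 1/R$. Then $\e_1 \chi_B \leq (\sqrt{\e_1}\, \eta)^2$ pointwise, so
\[
\lVert \e_1 \chi_B \rVert_{L^2_x}^2 \leq \lVert \sqrt{\e_1}\, \eta \rVert_{L^4_x}^4.
\]
Applying (\ref{GN5}) in the form $\lVert u \rVert_{L^4_x}^4 \lesssim \lVert u \rVert_{L^2_x}^2 \lVert \partial_x u \rVert_{L^2_x}^2$ to $u = \sqrt{\e_1}\, \eta$ gives two factors to estimate.

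For the $L^2_x$ factor, $\lVert \sqrt{\e_1}\, \eta \rVert_{L^2_x}^2 \lesssim \int \e_1 \eta^2 \dx \lesssim \bE_1(\phi \upharpoonleft B)$, noting the cutoff only enlarges the ball by a fixed factor absorbed in the implied constant. For the gradient factor,
\[
\lVert \partial_x (\sqrt{\e_1}\, \eta) \rVert_{L^2_x}^2 \lesssim \int \eta^2 \lvert \partial_x \sqrt{\e_1}\rvert^2 \dx + \int \lvert \partial_x \eta \rvert^2 \e_1 \dx \lesssim \int_B \e_2 \dx + \tfrac{1}{R^2} \int_B \e_1 \dx,
\]
where the first term is controlled by $\e_2$ via the diamagnetic inequality (\ref{diamagnetic}), and the second uses the cutoff bound $\lvert \partial \eta \rvert \lesssim 1/R$. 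Combining these two factors yields the pointwise-in-$s$ bound claimed at the top.

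Integrating in $s$ over $[0, s]$ and estimating the product by pulling $\bE_1(\phi(\cdot) \upharpoonleft B)$ out in the $L^\infty_s$ norm completes the proof. The main technical point — and the only place one must be mildly careful — is that the cutoff $\eta$ is slightly larger than $\chi_B$, so the right-hand side formally refers to a slightly enlarged ball; this is harmless since the covering argument used to deduce Lemma \ref{EL} absorbs such a dilation by adjusting the constant. No step should present a genuine obstacle: the proof is essentially a direct reading of Gagliardo-Nirenberg plus the diamagnetic inequality, with the weight $R^{-2}$ arising naturally from the cutoff derivative.
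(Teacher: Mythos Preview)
Your approach via a smooth cutoff and the whole-space Gagliardo--Nirenberg inequality is correct in spirit and close to what the paper does, but it does not quite prove the lemma exactly as stated: since your cutoff $\eta$ must equal $1$ on $B$ while being supported on a strictly larger set, the right-hand side of your estimate carries $\chi_{B'}$ for some $B' \supsetneq B$ rather than $\chi_B$. You acknowledge this and are right that it is harmless for the covering argument leading to Lemma~\ref{EL}.

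The paper sidesteps the enlargement by subtracting the mean instead of introducing a cutoff. One writes
\[
\int_B \e_1^2 \lesssim \int_B \lvert \sqrt{\e_1} - (\sqrt{\e_1})_{\mathrm{ave}} \rvert^4 + \int_B (\sqrt{\e_1})_{\mathrm{ave}}^4,
\]
applies the Sobolev--Poincar\'e (Ladyzhenskaya) inequality on the ball to the mean-zero function $\sqrt{\e_1} - (\sqrt{\e_1})_{\mathrm{ave}}$ to get $\lVert u \rVert_{L^4(B)}^4 \lesssim \lVert u \rVert_{L^2(B)}^2 \lVert \partial_x u \rVert_{L^2(B)}^2$, and treats the average term by H\"older. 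The $R^{-2}$ weight then arises from the scaling of the average rather than from a cutoff derivative, and every integral stays on the same ball $B$. The diamagnetic inequality is used exactly as in your argument. Both routes are standard; the paper's is slightly cleaner in that it yields the statement with $\chi_B$ on both sides, while yours would still suffice for every application in the paper.
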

\begin{proof}
Letting
\begin{equation*}
(\sqrt{\e_1})_\mathrm{ ave} := \frac{1}{\pi R^2} \int_B \sqrt{\e_1} \dx,
\end{equation*}
we have by Poincar\'e's inequality that
\begin{align}
\int_0^{s} \int_{B} \e_1^2 \dx ds^\prime &\lesssim
\int_0^{s} \int_{B} \lvert \sqrt{\e_1} - (\sqrt{\e_1})_\mathrm{ ave} \rvert^4 \dx ds^\prime +
\int_0^{s} \int_{B} (\sqrt{\e_1})^4_\mathrm{ ave} \dx ds^\prime \nonumber \\
&\lesssim 
\sup_{0 < s^\prime < s} \int_{B} \lvert \sqrt{\e_1} - (\sqrt{\e_1})_\mathrm{ ave} \rvert^2 \dx
\int_0^{s} \int_{B} \lvert \partial_x \sqrt{\e_1} \rvert^2 \dx ds^\prime \nonumber\\
&\quad + \int_0^{s} (\pi R^2)^{-3} \left\lvert \int_{B} \sqrt{\e_1} \dx \right\rvert^4 ds^\prime 
\label{Poin}.
\end{align}
Now
\begin{equation}
\int_B \lvert \sqrt{\e_1} - (\sqrt{\e_1})_\mathrm{ ave}\rvert^2 \dx \leq
\int_B \e_1 \dx.
\label{minimizer}
\end{equation}
From H\"older's inequality it follows that
\begin{align}
\left\lvert \int_B \sqrt{\e_1}(s^\prime) \dx \right\rvert^4
&\leq 
(\pi R^2)^2 \left( \int_B \e_1 \dx \right)^2 \nonumber \\
&\leq 
(\pi R^2)^2 \left\lVert \int_B \e_1(\cdot) \dx \right\rVert_{L_s^\infty([0, s^\prime])} 
\int_B \e_1(s^\prime) \dx.
\label{part2}
\end{align}
Substituting (\ref{minimizer}) and (\ref{part2}) in (\ref{Poin}) and using the diamagnetic inequality (\ref{diamagnetic}) in (\ref{Poin}) proves the lemma.
\end{proof}

\begin{lemma}(Energy concentration).
There exists a constant $C = C(\rr, \cM)$ such that for any $S > 0$ and any
heat flow $\phi$ on $[0, S] \times \rr$ we have for any 
$0 \leq s \leq S$, $x \in \rr$, and $R > 0$ that
\begin{equation}
E(\phi(s) \upharpoonleft B(x, R)) \leq
E(\phi(0) \upharpoonleft B(x, 2R)) +
C \frac{s}{R^2}E_0.
\label{EnergyConcentration}
\end{equation}
\end{lemma}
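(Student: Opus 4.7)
The plan is to run the classical Struwe-type energy localization argument intrinsically, using a smooth spatial cutoff together with the $L^2$-gradient flow structure of the heat flow equation.

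First, I would fix a smooth cutoff function $\psi: \rr \to [0,1]$ with $\psi \equiv 1$ on $B(x, R)$, $\operatorname{supp}(\psi) \subset B(x, 2R)$, and $\lvert \partial_x \psi \rvert \lesssim 1/R$. The goal is to control $\frac{d}{ds} \int_\rr \psi^2 \e_1 \dx$, since $2 E(\phi(s) \upharpoonleft B(x, R)) \leq \int_\rr \psi^2 \e_1(s) \dx$ while at $s = 0$ the latter is bounded by $2 E(\phi(0) \upharpoonleft B(x, 2R))$.

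The key identity is the one already appearing in the proof of the Bochner-Weitzenb\"ock lemma, namely $\partial_s \e_1 = 2\langle (\phi^* \nabla)_j \partial_s \phi, \partial_j \phi\rangle_{\phi^* h}$ (see (\ref{privileged case})). Multiplying by $\psi^2$, integrating over $\rr$, and integrating by parts using the Leibniz rule (\ref{Leibniz2}) gives
\begin{equation*}
\int_\rr \psi^2 \partial_s \e_1 \dx
= -2 \int_\rr \psi^2 \langle \partial_s \phi, (\phi^* \nabla)_j \partial_j \phi\rangle_{\phi^*h} \dx
- 4 \int_\rr \psi\, (\partial_j \psi) \langle \partial_s \phi, \partial_j \phi\rangle_{\phi^*h} \dx.
\end{equation*}
Since $\phi$ solves (\ref{heatflowequation}), the first term on the right equals $-2 \int \psi^2 \lvert \partial_s \phi \rvert^2_{\phi^* h} \dx$. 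For the cross term I would apply pointwise Cauchy-Schwarz followed by Young's inequality,
\begin{equation*}
4 \psi \lvert \partial_x \psi \rvert \lvert \partial_s \phi \rvert \lvert \partial_x \phi\rvert
\leq 2\psi^2 \lvert \partial_s \phi\rvert^2 + 2 \lvert \partial_x \psi \rvert^2 \e_1,
\end{equation*}
so that the $\lvert \partial_s \phi \rvert^2$ contributions cancel and we obtain
\begin{equation*}
\frac{d}{ds} \int_\rr \psi^2 \e_1(s) \dx \;\leq\; 2 \int_\rr \lvert \partial_x \psi\rvert^2 \e_1(s) \dx
\;\lesssim\; \frac{1}{R^2} \bE_1(s).
\end{equation*}

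Energy monotonicity (Proposition \ref{EnergyMono}, or more precisely the identity derived in its proof, which makes no smallness assumption on $E_0$) gives $\bE_1(s) \leq \bE_1(0) = 2 E_0$. Integrating the above differential inequality from $0$ to $s$ yields
\begin{equation*}
\int_\rr \psi^2 \e_1(s) \dx
\leq \int_\rr \psi^2 \e_1(0) \dx + C \frac{s}{R^2} E_0,
\end{equation*}
and using $\chi_{B(x,R)} \leq \psi^2 \leq \chi_{B(x, 2R)}$ on both sides converts this directly into the desired bound (\ref{EnergyConcentration}). There is no real obstacle here; the only point requiring mild care is ensuring the integration by parts is performed with the pullback connection $\phi^* \nabla$ so as to justify passing the derivative onto the scalar cutoff $\psi^2$, but this is exactly what the Leibniz rule (\ref{Leibniz2}) provides.
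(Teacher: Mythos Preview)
Your proof is correct and follows essentially the same argument as the paper's: both introduce a cutoff supported in $B(x,2R)$ and equal to $1$ on $B(x,R)$, compute the $s$-derivative of the localized energy via the Leibniz rule and integration by parts, use Cauchy--Schwarz and Young's inequality to absorb the $\lvert \partial_s \phi\rvert^2$ term, and then integrate in $s$ using energy monotonicity. The only cosmetic difference is that the paper begins from $\int \rho^2 \lvert \partial_s \phi\rvert^2$ and works toward the time derivative, whereas you start directly from $\partial_s \int \psi^2 \e_1$; the two computations are equivalent.
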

\begin{proof}
Let $\rho \in C_c^\infty(\rr \to [0, 1])$ be compactly supported in $B_{2R}(x)$, equal to $1$ on $B_R(x)$, and such that
$\lvert \partial_x \rho \rvert \leq 2/R$.
Then via (\ref{heatflowequation}), integration by parts, (\ref{NoTorsion2}), 
and (\ref{Leibniz2}), we obtain
\begin{align*}
\int_\rr \langle \partial_s \phi, \partial_s \phi \rangle_\ph \rho^2 \dx
=& \; \int_\rr \langle \rho^2 \partial_s \phi, (\pd)_j \partial_j \phi \rangle_\ph \dx \\
=&\; - \int_\rr \langle \rho^2 (\pd)_j \partial_s \phi, \partial_j \phi \rangle_\ph \rho^2 \dx \\
&- 2 \int_\rr \langle \rho \partial_j \rho \partial_s \phi, \partial_j \phi \rangle_\ph \dx \\
=&\; - \int_\rr \langle (\pd)_s \partial_j \phi, \partial_j \phi \rangle_\ph \rho^2 \dx \\
&- 2 \int_\rr \langle \rho \partial_j \rho \partial_s \phi, \partial_j \phi \rangle_\ph \dx \\
=& \; - \partial_s \int_\rr \e_1 \rho^2 \dx
- 2 \int_\rr \langle \partial_s \phi, \partial_j \phi \rangle_\ph \rho \cdot \partial_j \rho \dx.
\end{align*}
By Cauchy-Schwarz we have
\begin{equation*}
\int_\rr \langle \partial_s \phi, \partial_s \phi \rangle_\ph \rho^2 \dx
+ \partial_s \int_\rr \e_1 \rho^2 \dx 
\lesssim
\int_\rr \lvert \partial_s \phi \rvert \lvert \partial_x \phi \rvert_\ph \lvert \partial_x \rho \rvert_\ph \rho \dx,
\end{equation*}
and so from Young's inequality it follows that
\begin{equation*}
\int_\rr \langle \partial_s \phi, \partial_s \phi \rangle_\ph \rho^2 \dx
+ \partial_s \int_\rr \e_1 \rho^2 \dx 
\leq
\int_\rr \lvert \partial_s \phi \rvert^2_\ph \rho^2 \dx +
\frac{C}{R^2} \int_\rr \lvert \partial_x \phi \rvert^2_\ph \dx.
\end{equation*}
Therefore, in view of the smoothness of $\phi$ and the fact that $\rho$ has compact support,
it holds that
\begin{equation}
\partial_s \int_\rr \e_1 \rho^2 \dx \lesssim R^{-2}  \int_\rr \e_1 \dx.
\label{EL ODE}
\end{equation}
Integrating in (\ref{EL ODE}), we conclude
\begin{align*}
E(\phi(s) \upharpoonleft B(x, R)) 
&\leq \frac{1}{2} \int_\rr \e_1(s) \rho^2 \dx \\
&\leq \frac{1}{2} \int_\rr \e_1(0) \rho^2 \dx + \frac{C}{R^2} \int_0^{s} \int_\rr \e_1(s^\prime) \dx ds^\prime \\
& \leq E(\phi(0) \upharpoonleft B(x, 2R)) + C \frac{s}{R^2} E_0.
\end{align*}
\end{proof}

\section{Minimal blowup solutions} \label{S:Minimal blowup solutions}
Recall that the ground state energy $\Ec$ is defined to be equal to the minimum energy carried by a 
finite energy
nontrivial harmonic map $\rr \to \cM$ provided such maps exist and is set equal to $+\infty$ 
if such maps do not exist.
Our goal is to show that there exists an energy-dependent constant $M$
such that
\begin{equation}
\lVert \phi \rVert_{\Sn([0, \infty))} \leq M(E) < \infty
\label{Sbound}
\end{equation}
whenever $E(\phi(0)) \leq E < \Ec$.
It follows from interpolating between the small energy bounds
(\ref{ee2}) (with $k = 1$) and (\ref{ee4}) (with $k = 1$)
of Theorem \ref{SmallEnergyResult} that
such an inequality holds for $E$ sufficiently small.
Let $E^*$ be the supremum of $\tilde{E}$ 
for which $\lVert \phi \rVert_{\Sn[0, \infty)} \leq M(\tilde{E}) < \infty$ 
whenever $E(\phi(0)) \leq \tilde{E}$.

\begin{proposition}
It holds that
\begin{equation*}
E^* = \Ec.
\end{equation*}
\end{proposition}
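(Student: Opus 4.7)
The plan is to argue by contradiction: suppose $E^* < \Ec$. The reverse inequality $E^* \leq \Ec$ is essentially automatic when $\Ec < \infty$, since the stationary heat flow corresponding to a nontrivial harmonic map of energy $\Ec$ has infinite $\Sn$-norm on $[0, \infty)$. Thus the content of the proposition is the lower bound $E^* \geq \Ec$, and the strategy is to manufacture from a minimizing ``blow-up'' sequence a nontrivial harmonic map of energy at most $E^* < \Ec$, contradicting the definition of $\Ec$.

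By the definition of $E^*$, I would select classical initial data $\phi_{n,0}$ with $E(\phi_{n,0}) \to E^*$ and corresponding smooth global heat flows $\phi_n$ (existing by Theorem \ref{SmallEnergyResult} and the continuation criterion of Theorem \ref{LWP}) whose $\Sn$-norms on $[0, \infty)$ diverge. The driving mechanism is energy concentration. Using the energy localization estimate (Lemma \ref{EL}) together with Propositions \ref{e01s1}--\ref{e01s2}, any uniform-in-$n$ upper bound on $\sup_{s, x} E(\phi_n(s) \upharpoonleft B(x, R))$ for a fixed radius $R$ would translate into a uniform bound on $\lVert \phi_n \rVert_{\Sn}$. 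Hence there must be a threshold $\eta = \eta(E^*) > 0$ and triples $(s_n, x_n, R_n)$ with $E(\phi_n(s_n) \upharpoonleft B(x_n, R_n)) \geq \eta$. Invoking the symmetries (\ref{trans sym}) and (\ref{dil sym}), I would rescale and translate so that $x_n = 0$ and $R_n = 1$, producing $\tilde{\phi}_n$ of energy tending to $E^*$ with $\eta$-concentration in the unit ball at time $s_n$.

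Next I would extract a limit. Lemma \ref{Preliminary Estimates}, combined with parabolic regularity (\ref{Young}), yields uniform pointwise bounds on $\e_k(\tilde{\phi}_n)$ for every $k \geq 1$ on compact subsets of $(0, \infty) \times \rr$, wherever the $\Sn$-norm is locally under control. A diagonal Arzel\`a--Ascoli argument then supplies a subsequence converging in $C^\infty_{\mathrm{loc}}$ to a smooth limit $\phi_\infty$ solving (\ref{heatflowequation}), with $E(\phi_\infty) \leq E^*$. Energy monotonicity (Proposition \ref{EnergyMono}) furnishes the uniform bound $\int_0^\infty \int_\rr \lvert \partial_s \tilde{\phi}_n \rvert^2_{\ph}\, dx\, ds \leq 2 E^*$; a pigeonhole over dyadic time intervals identifies (after a further time translation within the heat flow) scales on which $\partial_s \tilde{\phi}_n$ is small in $L^2_{s,x}$. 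Passing to the limit yields $\partial_s \phi_\infty \equiv 0$, so $\phi_\infty$ is a harmonic map, and the retained unit-scale concentration guarantees $E(\phi_\infty) \geq \eta > 0$, so it is nontrivial.

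The main obstacle is ruling out loss of energy during the extraction, either to spatial infinity or through finer bubbling. Spatial escape is controlled by the energy concentration lemma (\ref{EnergyConcentration}), which prevents mass from draining out of bounded regions over bounded time scales under the chosen normalization. To suppress sub-bubbling I would take $R_n$ to be the \emph{smallest} scale at which $\eta$-concentration holds at time $s_n$; below this scale the local energy lies under the small-energy threshold, and Theorem \ref{SmallEnergyResult} together with Lemma \ref{Preliminary Estimates} forbid additional concentration. Once $\phi_\infty$ has been identified as a nontrivial harmonic map of energy at most $E^* < \Ec$, the definition of $\Ec$ is violated, forcing $E^* \geq \Ec$ and completing the proof.
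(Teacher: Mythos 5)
Your overall strategy coincides with the paper's: argue by contradiction from $E^* < \Ec$, use energy localization and the symmetries (\ref{trans sym}), (\ref{dil sym}) to produce unit-scale concentration, extract a limiting nontrivial harmonic map of energy below $\Ec$, and contradict the definition of $\Ec$. The ingredients you cite (Lemmas \ref{EL}, \ref{Preliminary Estimates}, the concentration estimate (\ref{EnergyConcentration}), energy monotonicity) are the right ones. However, there is a genuine gap at the quantitative heart of the argument: you never use the \emph{minimality} of $E^*$ beyond selecting data with energy tending to $E^*$, and as a result the two key smallness statements you need are not actually established on a common window. The paper's proof hinges on the observation that if the energy always dropped by a fixed $\epsilon$ by the first time $s'$ at which $\lVert \phi \rVert_{\Sn([0,s'])} = \eta$, one could restart the flow at $s'$ with energy below $E^*$ and deduce a global $\Sn$-bound, contradicting the choice of $E^*$; hence there is a sequence $\phi_n$ with $E(\phi_n(0)) - E(\phi_n(s'_n)) \to 0$. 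This near-conservation of energy on the normalized window $[0,1]$ is used twice: once through (\ref{BW}) to get $\lVert \sqrt{\e_2} \rVert^2_{L^2_{s,x}([0,1]\times\rr)} \lesssim \eta^4$, without which Lemma \ref{EL} does \emph{not} force concentration (your claim that a uniform local energy bound yields a uniform $\Sn$-bound fails as stated, because the right side of (\ref{EnergyLocalization}) contains $\lVert \e_2 \rVert_{L^1_{s,x}}$ and a term growing linearly in the length of the time interval); and once to make the tension fields $\T_n$ tend to zero in $L^2_{s,x}$ on the \emph{same} window where the concentration lives.

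Your proposal decouples these steps: you derive concentration from Lemma \ref{EL} alone, and separately invoke a dyadic pigeonhole on $\int\int \lvert \partial_s \tilde\phi_n \rvert^2$ to find some window with small tension field --- but that pigeonhole is performed after the rescaling has already been fixed by the concentration, so there is no guarantee the small-tension window and the concentration window coincide at comparable scales. A correct substitute for the paper's restart argument would be to pigeonhole first, over consecutive intervals each carrying $\Sn$-increment exactly $\eta$ (of which there are $N_n \to \infty$ by hypothesis), obtaining one window with energy drop $O(1/N_n)$, and only then run the localization, concentration, and extraction on that window. As written, your argument would equally "apply" to any sequence with diverging $\Sn$-norm regardless of $E^*$, which signals that the essential use of minimality is missing. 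Two smaller points: Lemma \ref{Preliminary Estimates} gives uniform $C^\infty$ bounds only where the $\Sn$-norm is finite and only at times bounded away from the initial slice of the window, so your Arzel\`a--Ascoli extraction must be set up there; and sub-bubbling is excluded simply because any bubble is itself a nontrivial harmonic map of energy at least $\Ec$, exceeding the available energy $E^* < \Ec$ --- the "smallest concentration scale" device is unnecessary, and Theorem \ref{SmallEnergyResult} (a global small-energy statement) does not directly forbid local concentration as you suggest.
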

\begin{proof}
The proof will proceed by contradiction.  It is clear
that necessarily $E^* \leq \Ec$, because nontrivial harmonic maps
as stationary solutions cannot satisfy (\ref{Sbound}).
Hence we shall assume $E^* < \Ec$.  By taking a family
of initial data with energy strictly between $E^*$ and $\Ec$
and tending to $E^*$, we may extract along some subsequence
and in a suitably strong sense a nontrivial harmonic map.
Crucial use is made of the definition of $E^*$ in order to establish this.
However, as energy is nonincreasing along the flow, the heat evolution $\phi$
always has energy less than and bounded away from the minimum energy $\Ec$ 
required to form a nontrivial harmonic map, which provides the contradiction.
Nowhere must we assume that $\Ec$ is finite, and thus our argument is 
also valid in the case where $\cM$ does not admit any finite energy harmonic maps.

Let $\eta > 0$ be a small parameter ($\eta \ll 1, E^*, \Ec$).  For any
$\phi$ with classical initial data $\phi(0)$ having energy $E(\phi(0)) < \Ec$,
let $s^\prime = s^\prime(\phi)$ be the first time such that
\begin{equation*}
\lVert \phi \rVert_{\Sn([0, s^\prime])} = \eta.
\end{equation*}
If there is no such time $s^\prime$ or if $s^\prime = \infty$, 
then Lemma \ref{Preliminary Estimates} is in force globally;
therefore we may assume without loss of generality that the $s^\prime$ 
that we consider are finite.

Let $\epsilon > 0$ be such that $\epsilon \ll \eta$.  Suppose $E^* < \Ec$.
If $E$ satisfies $E^* < E < \Ec$ and yet $E(\phi(s^\prime)) < E^* - \epsilon$
independently of $\phi$, then there exists a bound of the form (\ref{Sbound}) for $E$,
thus contradicting the choice of $E^*$.  
Therefore for all $n$ large there exists a heat flow $\phi_n$ with energy 
$E^* < E(\phi_n(0)) < E^* + 1/n < \Ec$ such that
\begin{equation*}
E(\phi_n(s^\prime_n)) > E^* - \frac{1}{n}.
\end{equation*}
Using scale invariance of the energy with respect to (\ref{dil sym})
 we may assume that each $s^\prime_n = 1$.

By (\ref{EnergyLocalization}) it follows that for all $R > 0$
\begin{align}
\eta^4 = \lVert \phi_n \rVert_{\Sn[0, 1]}^4 \lesssim& \;
\lVert E(\phi_n \upharpoonleft B_R(x) ) \rVert_{L^\infty_{s, x}([0, 1] \times \rr)}
\times \nonumber\\
&\times
\left(
\lVert \sqrt{\e_2} \rVert_{L_{s,x}^2([0, 1] \times \rr)}^2
+ \frac{1}{R^2} \lVert \sqrt{\e_1} \rVert^2_{L_{s,x}^2([0, 1] \times \rr)}
\right).
\label{Struwe1}
\end{align}
Taking $k = 1$ in (\ref{intEineq}), we have
\begin{equation}
\bE_1(1) - \bE_1(0) 
+ 2 \lVert \sqrt{\e_2} \rVert_{L_{s,x}^2 ([0, 1] \times \rr)}^2
\lesssim
\lVert \sqrt{\e_1} \rVert_{L_{s,x}^4([0, 1] \times \rr)}^4.
\label{BW}
\end{equation}
For large $n$, $E(\phi_n(1)) \sim E^*$ by construction and therefore
from (\ref{BW}) it follows that $\lVert \sqrt{\e_2} \rVert^2_{L_s^2L_x^2([0, 1]\times \rr)} \lesssim_{E^*} \eta^4$.  
Using both of these facts in (\ref{Struwe1}) (and suppressing now the dependence upon $E^*$ 
in subsequent inequalities) we get for $R > 0$ that
\begin{equation*}
\eta^4 \lesssim \lVert E(\phi_n \upharpoonleft B_R(x) ) \rVert_{L^\infty_{s, x}([0, 1] \times \rr)}
\left( \eta^4 + \frac{1}{R^2} \right).
\end{equation*}
For large $n$ pick $r_n > 0$ so that 
$r_n \sim_\eta 1$
and
\begin{equation*}
1\lesssim  \lVert E(\phi_n \upharpoonleft B_{r_n}(x) ) \rVert_{L^\infty_{s, x}([0, 1] \times \rr)}.
\end{equation*}
Thus for all large $n$ we may pick $x_n \in \rr$ and $0 \leq s_n \leq 1$ so that
\begin{equation*}
1 \lesssim_\eta E(\phi(s_n) \upharpoonleft B_{r_n}(x_n)).
\end{equation*}
By (\ref{EnergyConcentration}) we have
\begin{equation*}
E(\phi_n(s_n) \upharpoonleft B_{r_n}(x_n)) \leq
E(\phi_n(s) \upharpoonleft B_{2r_n}(x_n)) + C \frac{s_n - s}{r_n^2} E(\phi_n(s)),
\label{Struwe2}
\end{equation*}
for $0 \leq s \leq s_n$,
which in view of our choice of $x_n$, $r_n$, and $s_n$, means
\begin{equation*}
1 \lesssim_\eta 
E(\phi_n(s) \upharpoonleft B_{2r_n}(x_n)) + C \frac{s_n - s}{r_n^2} E(\phi_n(s)).
\end{equation*}
As $r_n \sim 1$ and $0 \leq s_n - s \leq 1$, 
we may choose $\tilde{r}_n \geq r_n$, $\tilde{r}_n \sim_{\eta} r_n$
so that
\begin{equation*}
1 \lesssim_\eta E(\phi_n(s) \upharpoonleft B_{\tilde{r}_n}(x_n))
\end{equation*}
for all $0 \leq s \leq s_n$.
Set
\begin{equation*}
\tilde{\phi}_n(s, x) := \phi_n(s \tilde{r}_n^2, x_n + x \tilde{r}_n).
\end{equation*}
Since
$s_n /\tilde{r}_n^2 \gtrsim  1$,
we have
for some sufficiently small constant $c > 0$
that each $\tilde{\phi}_n$ is a heat flow with 
\begin{equation*}
1 \lesssim_\eta E(\tilde{\phi}_n(s) \upharpoonleft B(0,1))
\end{equation*}
for all $0 \leq s \leq c \eta$.
Set $I = [0, c \eta]$.
To each $\tilde{\phi}_n$ we have associated its tension field $\T_n$ given by
\begin{equation*}
\T_n := (\tilde{\phi}_n^* \nabla)_x \partial_x \tilde{\phi}_n.
\end{equation*}
Then
\begin{equation*}
\lVert \T_n \rVert_{L_{s,x}^2(I \times \rr)} \to 0 \quad \mathrm{ as}\quad n \to \infty
\end{equation*}
since $E(\phi_n(1)) - E(\phi_n(0)) \to_{n \to \infty} 0$ 
(by construction)
and because
the energy functional is monotonic (Proposition \ref{EnergyMono}).
Passing to a subsequence if necessary, choose times $\tilde{s}_n \in I$ so that
\begin{equation*}
\lVert \T_n(\tilde{s}_n) \rVert_{L^2_x(\rr)} \to 0.
\end{equation*}
Set $\psi_n(s) := \tilde{\phi}_n(s - \tilde{s}_n)$.  
Therefore the tension fields associated to $\psi_n$ satisfy
\begin{equation*}
\lVert (\psi_n^* \nabla)_x \partial_x \psi_n (0) \rVert_{L^2_x(\rr)} \to 0,
\end{equation*}
and so in view of \cite[Proposition 5.1]{St85}
the $\psi_n(0)$ converge strongly 
along some subsequence
in the Sobolev space $H_{x, \mathrm{ loc}}^{2}$ 
to a harmonic map $\psi_0$ 
away from any ``energy bubbles'', which asymptotically have
a harmonic map profile.  
As $E(\psi_n(0)) < \Ec - \epsilon^\prime$ 
uniformly in $n$ for some $\epsilon^\prime > 0$, it follows that energy bubble formation
is impossible and that $\psi_0$ must be a 
trivial harmonic map.  This however is precluded by the fact that
$E(\psi_n(0) \upharpoonleft B(0,1)) \gtrsim_\eta 1$ uniformly in $n$.  
From this contradiction we conclude that $E^* = \Ec$.
\end{proof}

Combining Lemma \ref{Preliminary Estimates} and Theorem \ref{LWP}, 
we conclude from $E^* = \Ec$ the following
\begin{thm}
For any classical initial data $\phi_0$ with $E(\phi_0) < \Ec$ we have that there exists a unique global smooth heat flow $\phi$ with initial data $\phi_0$.  Moreover, $\phi$ satisfies the estimates
\begin{align}
\int_0^\infty \int_\rr s^{k-1} \e_{k+1}(s, x) \dx ds &\lesssim_{E_0, k} 1, \label{ek1} \\
\sup_{0 < s < \infty} s^{k-1} \int_\rr \e_k(s, x) \dx &\lesssim_{E_0, k} 1, \label{ek2} \\
\sup_{ \substack{ 0 < s < \infty \\ x \in \rr}} s^k \e_k(s, x) &\lesssim_{E_0, k} 1, \label{ek3} \\
\int_0^\infty s^{k-1} \sup_{x \in \rr} \e_k(s, x) \ds &\lesssim_{E_0, k} 1. \label{ek4}
\end{align}
for each $k \geq 1$.
\label{Main Energy Estimates}
\end{thm}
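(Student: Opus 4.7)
The plan is to bootstrap off the three pillars already assembled in the section: the local existence/uniqueness and continuation criterion of Theorem \ref{LWP}, the $\Sn$-norm controlled estimates of Lemma \ref{Preliminary Estimates}, and the global $\Sn$-norm bound that follows from the preceding proposition's identification $E^* = \Ec$.

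First I would invoke Theorem \ref{LWP} to obtain a unique smooth heat flow $\phi$ on a maximal forward existence interval $[0, S^*)$ with $S^* \in (0, \infty]$, where $S^* < \infty$ can hold only if $\partial_x \phi$ fails to remain bounded as $s \uparrow S^*$. Since $E_0 := E(\phi_0) < \Ec = E^*$, the preceding proposition supplies a constant $M(E_0)$ such that for every $S < S^*$ we have $\lVert \phi \rVert_{\Sn([0, S])} \leq M(E_0)$, uniformly in $S$. Feeding this into Lemma \ref{Preliminary Estimates} applied on $I = [0, S]$ yields each of (\ref{e0ks1})--(\ref{e0ks4}) with constants depending only on $E_0$ and $k$, and in particular independent of $S < S^*$.

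Next I would argue by contradiction that $S^* = \infty$. Specializing (\ref{e0ks3}) with $k = 1$ gives
\begin{equation*}
\sup_{\substack{0 < s < S \\ x \in \rr}} s \, \e_1(s, x) \lesssim_{E_0} 1,
\end{equation*}
uniformly over $S < S^*$, so $\lvert \partial_x \phi \rvert$ is bounded on $[\delta, S^*) \times \rr$ for any fixed $\delta > 0$. On the initial slab $[0, \delta]$, the smoothness and Schwartz preservation asserted by Theorem \ref{LWP} already force $\partial_x \phi$ to be bounded. Consequently $\partial_x \phi$ remains bounded on all of $[0, S^*) \times \rr$, which by the continuation criterion of Theorem \ref{LWP} forbids $S^* < \infty$.

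With $S^* = \infty$ in hand, one final application of Lemma \ref{Preliminary Estimates} on $I = [0, \infty)$, using the global bound $\lVert \phi \rVert_{\Sn([0, \infty))} \leq M(E_0)$, upgrades (\ref{e0ks1})--(\ref{e0ks4}) to the global estimates (\ref{ek1})--(\ref{ek4}). I do not anticipate a genuinely hard step here; the only place that demands care is patching the near-$s=0$ regime, where estimates carrying factors of $s^k$ are uninformative, with the $s \geq \delta$ regime, and this is resolved cleanly by the smoothness statement of Theorem \ref{LWP}.
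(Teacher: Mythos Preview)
Your proposal is correct and matches the paper's own approach: the paper's ``proof'' of this theorem is the single clause ``Combining Lemma \ref{Preliminary Estimates} and Theorem \ref{LWP}, we conclude from $E^* = \Ec$ the following,'' and you have simply spelled out the continuation argument that this sentence leaves implicit. The only minor remark is that, depending on how one reads the definition of $E^*$, global existence may already be packaged into the statement $E^* = \Ec$ (since the bound (\ref{Sbound}) is phrased on $[0,\infty)$), in which case your continuation step is redundant but harmless; your reading, in which the $\Sn$-bound is first established on the maximal interval and then fed into the blowup criterion, is the more careful one.
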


\begin{corollary}
Let $\phi_0$, $\phi$ be as in Theorem \ref{Main Energy Estimates},
classical with respect to $\phi(\infty) \in \cM$.  Then
for each fixed $s$ the function $\phi(s)$ is Schwartz with respect to $\phi(\infty)$.
Additionally, $\phi(s)$ converges as $s \to \infty$ in the 
$C_x^\infty(\rr \to \mathbf{R}^+ \times \cM)$ topology
to $\phi(\infty)$.
\label{Convergence Cor}
\end{corollary}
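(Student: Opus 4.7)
The corollary asserts two things: Schwartzness of $\phi(s)$ at each fixed $s \geq 0$, and convergence $\phi(s) \to \phi(\infty)$ in $C_x^\infty$ as $s \to \infty$. I would handle the Schwartzness claim first via a direct invocation of the continuation statement in Theorem \ref{LWP}: Schwartzness persists so long as $\partial_x \phi$ remains uniformly bounded, and the energy estimate (\ref{ek3}) with $k = 1$ gives $\e_1(s,x) \lesssim_{E_0} s^{-1}$, while $\partial_x \phi_0 \in L_x^\infty$ since $\phi_0$ is Schwartz; together these ensure uniform boundedness of $\partial_x \phi$ on $[0, S] \times \rr$ for every $S > 0$, so that Schwartzness is propagated to every $s \geq 0$.

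For the $C_x^\infty$-convergence I would separate the zeroth-order statement from the higher-order ones. Higher derivatives are the easy part: intrinsically, (\ref{ek3}) gives $\sqrt{\e_k}(s,x) \lesssim_{E_0,k} s^{-k/2} \to 0$, and the extrinsic derivatives $\partial_x^k(\iota \circ \phi)$ decompose, via Gauss's formula, as universal polynomial expressions in the iterated covariant derivatives $(\phi^* \nabla)^j \partial_x \phi$ for $j < k$, with coefficients uniformly bounded by bounded geometry of $\Pi$ and its covariant derivatives. Hence $\partial_x^k(\iota \circ \phi)(s, \cdot) \to 0$ uniformly in $x$ for every $k \geq 1$.

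The main obstacle is the zeroth-order convergence $\lVert \phi(s) - \phi(\infty) \rVert_{L_x^\infty} \to 0$, since the intrinsic energy estimates provide no direct control on $\phi - \phi(\infty)$. To handle this I would pass to the extrinsic formulation (\ref{heatflow ex}) and apply Duhamel's formula (\ref{Duhamel}) to obtain
\begin{equation*}
\phi(s) - \phi(\infty) = e^{s\Delta}(\phi_0 - \phi(\infty)) + \int_0^s e^{(s - s^\prime)\Delta} \Pi(\phi)(\partial_x \phi, \partial_x \phi)(s^\prime) \; ds^\prime,
\end{equation*}
where $\phi(\infty)$ is viewed as a constant in $\mathbf{R}^n$. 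The linear piece tends to $0$ in every $C_x^k$-norm by parabolic regularity (e.g., the $p = 1$, $q = \infty$ case of (\ref{Young})) applied to the Schwartz function $\phi_0 - \phi(\infty)$. For the nonlinear piece I would split the integral at an intermediate time $s_0 \in (0, s)$. The tail $\int_{s_0}^s$ is bounded in $L_x^\infty$ by $\int_{s_0}^{\infty} \lVert \e_1(s^\prime) \rVert_{L_x^\infty} \; ds^\prime$, using $L_x^\infty$-contractivity of $e^{s \Delta}$ together with the pointwise bound $\lvert \Pi(\phi)(\partial_x \phi, \partial_x \phi) \rvert \lesssim \e_1$; by (\ref{ek4}) with $k = 1$ this tail tends to $0$ as $s_0 \to \infty$. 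The head $\int_0^{s_0}$, estimated via $\lVert e^{(s-s^\prime)\Delta} g \rVert_{L_x^\infty} \lesssim (s - s^\prime)^{-1} \lVert g \rVert_{L_x^1}$, is bounded by $(s - s_0)^{-1} \int_0^{s_0} \lVert \e_1(s^\prime) \rVert_{L_x^1} \; ds^\prime \lesssim E_0 \, s_0/(s - s_0)$, which tends to $0$ as $s \to \infty$ with $s_0$ fixed. Choosing $s_0$ large first, then sending $s \to \infty$, yields the desired $L_x^\infty$-convergence; combined with the higher-order decay above, the full $C_x^\infty$-convergence follows.
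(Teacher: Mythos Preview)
Your argument is correct and follows the paper's approach: Schwartzness via Theorem~\ref{LWP} plus the global bound on $\partial_x\phi$ from (\ref{ek3}), decay of higher spatial derivatives from (\ref{ek3}), and the zeroth-order convergence from the extrinsic Duhamel representation. Where the paper simply asserts that ``energy bounds imply $\phi(s,x)\to\phi(\infty)$ in the uniform topology,'' you actually supply the mechanism---the split of the Duhamel integral at $s_0$ with the tail controlled by (\ref{ek4}) and the head by the $L^1\!\to\!L^\infty$ smoothing of the heat kernel---so your treatment of this step is more explicit than the paper's.

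One small omission: the corollary asks for convergence in the $C_x^\infty(\rr\to\mathbf{R}^+\times\cM)$ topology, which in the paper's reading includes $s$-derivatives as well. The paper disposes of this in one line by using (\ref{heatflow ex}) to rewrite $\partial_s\phi$ (and, inductively, higher $\partial_s$-derivatives) as polynomial expressions in spatial derivatives of $\phi$, whose uniform decay you have already established. You should add this remark to complete the proof as stated.
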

\begin{proof}
By Theorems \ref{LWP} and \ref{Main Energy Estimates} and a simple continuity argument
we have that $\phi(s) - \phi(\infty)$ is Schwartz for each fixed $s \geq 0$.

Energy bounds imply $\phi(s, x) \to_{s \to \infty} \phi(\infty)$ in the uniform topology and that
$\partial_x^k \phi(s, x) \to_{s \to \infty} 0$ in the uniform topology for all integers $k \geq 1$.
Therefore, $\phi(s)$ converges to $\phi(\infty)$ in $C_x^\infty(\rr \to \cM)$
as $s \to \infty$.  
To upgrade this to convergence in 
$C_x^\infty(\rr \to \mathbf{R}^+ \times \cM)$,
use (\ref{heatflow ex}) to convert time derivatives into spatial derivatives.
\end{proof}

\section{The caloric gauge} \label{S:Caloric gauge}

For simplicity, we split this section into two parts, in the first ignoring the time variable $t$,
and in the second taking it up again.

\subsection{The static caloric gauge} \label{S:StaticCaloricGauge}
Let $\phi: \mathbf{R}^+ \times \rr \to \cM$ be a heat flow with classical initial data $\phi_0$
that equals $\phi(\infty)$ at spatial infinity.  We now introduce the notion
of a gauge, followed by the \emph{caloric gauge} which originates in \cite{Trenorm}.  
Our conventions and presentation draw on \cite{Trenorm}, \cite{T4}, and
\cite[Chapter 6]{Tdis}.  For a thorough introduction to gauges (sometimes referred
to as \emph{moving frames}), see \cite[Chapter 7]{Spivak}.
 
Given any $p \in \cM$ we define an \emph{orthonormal frame} at $p$ to be any orthogonal
orientation-preserving map $e : \mathbf{R}^m \to T_p \cM$ from $\mathbf{R}^m$ to the
tangent space $T_p \cM$ at $p$.  We let $\mathrm{ Frame}(T_p \cM)$ stand for the space of all
such frames and note that this space admits the obvious transitive action of the special orthogonal
group $SO(m)$.
We define the \emph{orthonormal frame bundle} $\mathrm{ Frame}(\phi^*T \cM)$ of $\phi$ to
be the space of all pairs $((s, x), e)$ where $(s, x) \in \mathbf{R}^+ \times \rr$ and
$e \in \mathrm{ Frame}(T_{\phi(s, x)}\cM)$.  This is a fiber bundle over $\mathbf{R}^+ \times \rr$.
We define an \emph{orthonormal frame} $e \in \Gamma(\mathrm{ Frame}(\phi^* T\cM))$ for $\phi$
to be a section of $\mathrm{ Frame}(\phi^* T\cM)$, i.e., a smooth assignment
$e(s, x) \in \mathrm{ Frame}(T_{\phi(s, x)}\cM)$ of an orthonormal frame at $\phi(s, x)$ to
every point $(s, x) \in \mathbf{R}^+ \times \rr$.

Each orthonormal frame $e \in \Gamma(\mathrm{ Frame}(\phi^* T\cM))$ provides
an orthogonal, orientation-preserving identification between the vector bundle $\phi^*T\cM$
bearing the metric $\ph$ and the trivial bundle $(\mathbf{R}^+ \times \rr) \times \mathbf{R}^m$
bearing the Euclidean metric on $\mathbf{R}^m$.
Hence via $e$ we may pull back sections $\Psi \in \Gamma (\phi^* T \cM)$ to functions $e^* \Psi$ and 
we may pull back
the connection $\pd$ on $\phi^* T\cM$ to a connection $D$ on the trivial bundle $(\mathbf{R}^+ \times \rr) \times \mathbf{R}^m$ so that
\begin{equation}
D_\alpha := \partial_\alpha + A_\alpha
\label{D Def}
\end{equation}
with $A_\alpha \in \mathfrak{so}(m)$ the skew-adjoint $m \times m$ matrix field defined by
\begin{equation}
(A_\alpha)_{ab} := \langle (\pd)_\alpha e_a, e_b\rangle_{\ph}
\label{A Definition}
\end{equation}
and $e_1, \ldots, e_m$ the images under $e$ of the standard orthonormal basis vectors of $\mathbf{R}^m$.

Throughout this section Greek lettered indices denote arbitrary variables, i.e. space or time, whereas
Roman lettered indices are only used to denote spatial variables.

Let us define the derivative fields $\psi_\alpha : \mathbf{R}^+ \times \rr \to \mathbf{R}^m$ by
\begin{equation}
\psi_\alpha := e^* \partial_\alpha \phi.
\label{DF Def}
\end{equation}
For short we write $\psi_x = (\psi_1, \psi_2)$ and $A_x = (A_1, A_2)$.
The zero-torsion property (\ref{NoTorsion2}) now manifests itself as
\begin{equation}
D_\alpha \psi_\beta = D_\beta \psi_\alpha,
\label{NoTorsion3}
\end{equation}
which equivalently may be expressed as
\begin{equation*}
\partial_\alpha \psi_\beta - \partial_\beta \psi_\alpha = A_\beta \psi_\alpha - A_\alpha \psi_\beta.
\end{equation*}
The Leibnitz rule (\ref{Leibniz2}) becomes
\begin{equation*}
\partial_\alpha (\psi \cdot \psi^\prime) = 
(D_\alpha \psi) \cdot \psi^\prime +
\psi \cdot (D_\alpha \cdot \psi^\prime).
\end{equation*}
Note that the covariant derivative $D_x$ acts on matrix fields $B$ via
\begin{equation}
D_x B = \partial_x B + [A, B].
\label{Matrix Action}
\end{equation}
Curvature in this context is given by
\begin{equation}
F_{\alpha \beta} := [D_\alpha, D_\beta]
= \partial_\alpha A_\beta - \partial_\beta A_\alpha
+ [A_\alpha, A_\beta].
\label{F Def}
\end{equation}
Using the coordinate expression
\begin{equation*}
(\phi^* \nabla)_\alpha (\phi^* \nabla)_\beta \Psi^c -
(\phi^* \nabla)_\beta (\phi^* \nabla)_\alpha \Psi^c
=
(\partial_\alpha \phi)^a (\partial_\beta \phi)^b
\cR_{a b d}^c \Psi^d,
\end{equation*}
for curvature,
we can write
\begin{equation}
F_{\alpha \beta} = (e^* \cR(\phi))(\psi_\alpha, \psi_\beta),
\label{F R}
\end{equation}
where
$(e^* \cR(\phi))( \cdot, \cdot )$ denotes the pullback of the 
Riemannian curvature tensor $\cR$ on $\cM$,
defined in local coordinates via
\begin{equation*}
(e (e^* \cR(\phi))(\psi_\alpha, \psi_\beta) \phi)^c
=
\cR_{abd}^c(\phi) (e \psi_\alpha)^a (e \psi_\beta)^b (e \phi)^d.
\end{equation*}

In terms of the frame $e$ the heat flow equation (\ref{heatflowequation}) becomes
\begin{equation}
\psi_s = D_j \psi_j.
\label{Frame Heat}
\end{equation}
Note the gauge symmetry inherent in (\ref{Frame Heat}):
It is invariant under the transformation
\begin{align*}
\phi &\mapsto \phi; &e &\mapsto eU;
&\psi_\alpha \mapsto U^{-1} \psi_\alpha; \\
D_\alpha &\mapsto U^{-1}D_\alpha U;&
A_\alpha &\mapsto U^{-1}\partial_\alpha U + U^{-1}A_\alpha U,
\end{align*}
for any choice of \emph{gauge transform} $U: \mathbf{R}^+ \times \rr \to SO(m)$.

We now introduce the caloric gauge, defining it as in \cite{T4}:
\begin{definition}
Let $\phi_0: \rr \to \cM$ be a function 
Schwartz with respect to $\phi(\infty) \in \cM$
and with energy $E_0$ less than $\Ec$.
Let $\phi: \mathbf{R}^+ \times \rr \to \cM$ be its heat flow extension.  We say that a gauge $e$ is a
\emph{caloric gauge} for $\phi$ with
\emph{boundary frame} $e(\infty) \in \mathrm{Frame}(T_{\phi(\infty)}\cM)$
if we have
\begin{equation*}
A_s = 0
\end{equation*}
throughout $\mathbf{R}^+ \times \rr$ and also
\begin{equation*}
\lim_{s \to \infty} e(s, x) = e(\infty)
\end{equation*}
for all $x \in \rr$.
\end{definition}

\begin{thm}[Existence and uniqueness of caloric gauge]
Let $\phi: \mathbf{R}^+ \times \rr \to \cM$
be a heat flow with classical initial data that 
equals $\phi(\infty) \in \cM$ at infinity and that has 
initial energy $E_0$ less than $\Ec$.  Let $e(\infty) \in \mathrm{Frame}(T_{\phi(\infty)}\cM)$ 
be an orthonormal frame at $\phi(\infty)$.  Then there exists a unique caloric gauge 
$e \in \Gamma(\mathrm{Frame}(\phi^* T\cM))$ for $\phi$ with boundary frame $e(\infty)$.
Furthermore, for each fixed $s$, the $\psi_\alpha$ are Schwartz functions in space.
\label{CalGauge EU}
\end{thm}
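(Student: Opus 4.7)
The plan is to build $e$ by parallel transporting the prescribed boundary frame $e(\infty)$ backward from $s = \infty$ along each heat flow trajectory $s \mapsto \phi(s, x)$. By (\ref{A Definition}), requiring $A_s = 0$ is equivalent to $(\pd)_s e_a = 0$ for each $a$, which extrinsically becomes the linear ODE
\begin{equation*}
\partial_s e_a(s, x) = -\Pi(\phi(s, x))(\partial_s \phi(s, x), e_a(s, x))
\end{equation*}
in $s$, with $x$ treated as a parameter; smoothness of the coefficients follows from that of $\phi$, and since $\nabla$ is metric-compatible the evolution preserves orthonormality in $T_{\phi(s,x)}\cM$.

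First, for each finite $S$ large enough that $\phi(S, \cdot)$ lies within the normal injectivity radius of $\phi(\infty)$ (valid once $S \geq S_0$ by Corollary \ref{Convergence Cor}), I would use the canonical nearest-point identification of $T_{\phi(S, x)}\cM$ with $T_{\phi(\infty)}\cM$ to assign to $\phi(S, x)$ a frame corresponding to $e(\infty)$, then solve the ODE backward on $[0, S]$ to obtain a smooth family $e^{(S)}$. The main step is to show that $e^{(S)}$ converges in $C^\infty_\mathrm{loc}$ as $S \to \infty$ to a well-defined orthonormal frame $e$: the difference $e^{(S_2)}(s, x) - e^{(S_1)}(s, x)$ is controlled by an integral over $[S_1, S_2]$ involving $\partial_s \phi$, and a Cauchy argument goes through using decay of $\partial_s \phi$ extracted from $\partial_s \phi = \Delta \phi + \Pi(\phi)(\partial_x \phi, \partial_x \phi)$ combined with the energy estimates (\ref{ek1})--(\ref{ek4}) and Corollary \ref{Convergence Cor}.

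Once $e$ is defined the remaining claims follow in routine fashion: smoothness in $(s, x)$ comes from smooth dependence of the ODE on parameters and is preserved in the $S \to \infty$ limit, while $A_s = 0$, orthonormality, and the boundary condition hold by construction. For the Schwartz property of $\psi_\alpha = e^* \partial_\alpha \phi$, recall that $\partial_\alpha \phi$ is Schwartz in $x$ (by Theorem \ref{Main Energy Estimates} and Corollary \ref{Convergence Cor}) and $e$ is uniformly bounded, so it suffices to control $\partial_x^k e$; differentiating the parallel transport ODE in $x$ yields linear inhomogeneous ODEs for $\partial_x^k e_a$ whose forcing terms involve $\partial_x^j \Pi(\phi)$ and $\partial_x^j \phi$ for $j \leq k$, all Schwartz in $x$, and a Gronwall-type estimate along each trajectory delivers the desired rapid decay.

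Uniqueness is immediate: if $e'$ is another caloric gauge with boundary frame $e(\infty)$, then along every trajectory $s \mapsto \phi(s, x)$ each $e'_a$ satisfies the same linear ODE as $e_a$ with the same limit at $s = \infty$, so $e' = e$ by uniqueness of ODE solutions. The principal obstacle in the plan is the convergence of the backward-transported frames $e^{(S)}$ as $S \to \infty$; this is the step where one must genuinely invoke the quantitative decay supplied by Theorem \ref{Main Energy Estimates}, and it is what makes a boundary condition at $s = \infty$ the ``natural'' place to anchor the gauge.
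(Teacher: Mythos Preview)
Your approach is correct but takes a genuinely different route from the paper's. The paper does not transport backward from $s=\infty$ via finite-$S$ approximations; instead it fixes an \emph{arbitrary} smooth orthonormal frame $e(0,\cdot)$ at $s=0$ (available since $\rr$ is contractible and $\phi_0$ is classical), transports it \emph{forward} by $(\pd)_s e_a = 0$, and then shows that $e(s,\cdot)$ converges in $C^\infty_{\mathrm{loc}}$ as $s\to\infty$ to some smooth limit frame $e(\infty,\cdot)$. A final $s$-independent gauge transform $U(x)$ rotates this limit to the prescribed constant $e(\infty)$. The convergence is obtained by first proving $A_x(s)\to A_x(\infty)$ in $C^\infty$ via the decay bounds on $D_x^k A_x$ coming from (\ref{ek3}), and then integrating the extrinsic frame ODE using the key estimate $\int_0^\infty \lVert \partial_s\phi\rVert_{L_x^\infty}\,ds<\infty$ (equation (\ref{time deriV bound})). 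For the Schwartz claim the paper defers to Corollary~\ref{Field Cor}, proved from the subsequent connection bounds, rather than running a Gronwall on $\partial_x^k e$ as you propose. The two constructions are essentially dual and both hinge on the same integrability of $\partial_s\phi$: yours builds in the boundary condition from the start and avoids the final gauge rotation, while the paper's avoids having to make sense of ``initial data at $s=\infty$'' and the attendant approximation scheme. One caution for your write-up: the pointwise bound $|\partial_s\phi|\lesssim s^{-1}$ from (\ref{ek3}) alone is \emph{not} integrable at infinity, so your Cauchy step for $e^{(S)}$ really does need the Duhamel/Schwartz input you mention and not merely (\ref{ek1})--(\ref{ek4}); make that dependence explicit.
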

\begin{proof}
The caloric gauge condition $A_s = 0$ implies that the frame $e$ evolves according to
\begin{equation}
(\phi^* \nabla)_s e_j = 0.
\label{Frame evolution}
\end{equation}
If $e, e^\prime$ are caloric gauges with the same boundary frame $e(\infty),$ then
$\lvert e_j - e_j^\prime \rvert$ is constant in $s$.  Uniqueness follows since
$\lvert e_j - e_j^\prime \rvert$ vanishes as $s \to \infty$.

Turning now to existence, given initial data $\phi_0$,
choose an arbitrary smooth orthonormal frame $e(0, x) \in \mathrm{ Frame}(T_{\phi(0, x)} \cM)$
that differs from a constant frame by a Schwartz function.  
This is possible because we assume that the initial data $\phi_0$ is classical
and because the spatial domain $\rr$ is contractible.
As $\phi^* \nabla$ respects the metric on $\phi^* T\cM$, we have
from (\ref{Frame evolution})
that $e$ remains orthonormal as it evolves in $s$.  
It follows from the Picard theorem that $e$ may be defined globally.
Smoothness of $\phi$ and $e(0, \cdot)$ implies that $e$, $\psi$, and $A$ are all smooth.

Viewing the energy density estimates (\ref{ek3}) in terms of frames yields
\begin{equation}
\lvert D_x^{k-1} \psi_x \rvert
\lesssim_{E_0, k}
s^{-k/2}
\label{Spatial Field Decay}
\end{equation}
for $k \geq 1$.
Using (\ref{Frame Heat}), we also have for $k \geq 1$ that
\begin{equation}
\lvert D_x^{k-1} \psi_s \rvert
\lesssim_{E_0, k}
s^{-(k+1)/2}.
\label{Heat Field Decay}
\end{equation}
Combining $A_s = 0$ and (\ref{F Def}) gives
\begin{equation}
\partial_s A_x = F_{xs}.
\label{A Heat Evolution}
\end{equation}
Therefore, using (\ref{F R}), tensoriality, and bounded geometry, we have
\begin{equation}
\lvert \partial_s D_x^k A_x \rvert
\lesssim_k
\sum_{ \substack{j_1 + j_2 + \cdots j_\ell = k \\ 
j_1, j_2, \ldots, j_\ell \geq 0 \\
\ell = 2, \ldots, k + 2} }
\lvert D_x^{j_1} \psi_s \rvert
\lvert D_x^{j_2} \psi_x \rvert
\cdots
\lvert D_x^{j_\ell} \psi_x \rvert
\label{DA Decay}
\end{equation}
for $k \geq 0$.
Using (\ref{Spatial Field Decay}) and (\ref{Heat Field Decay}) in (\ref{DA Decay})
implies
\begin{equation*}
\lvert \partial_s D_x^k A_x \rvert
\lesssim_k
s^{-(k + 3)/2}.
\end{equation*}
Therefore, since $A_x$ and its derivatives are bounded at $s=0$,
\begin{equation}
\lvert D_x^k A_x \rvert \lesssim_{k, \phi} 1
\label{DA Qualitative}
\end{equation}
on $\mathbf{R}^+ \times \rr$ 
for all $k \geq 0$,
which, combined with (\ref{Matrix Action}) and the triangle inequality, implies
\begin{equation}
\lvert \partial_x^k A_x \rvert
\lesssim_{k, \phi} 1.
\label{dA Qualitative}
\end{equation}
Using (\ref{DA Qualitative}) and (\ref{dA Qualitative}) in
(\ref{Spatial Field Decay}) and (\ref{Heat Field Decay})
leads to
\begin{equation*}
\lvert \partial_x^k \psi_x \rvert
\lesssim_k s^{-1/2}
\quad \text{and} \quad
\lvert \partial_x^k \psi_s \rvert
\lesssim_k s^{-1}.
\end{equation*}
In view of (\ref{A Heat Evolution}),
differentiating $F_{\alpha \beta} = (e^* R(\phi))(\psi_\alpha, \psi_\beta)$ and using
(\ref{DA Qualitative}), (\ref{dA Qualitative}), (\ref{Spatial Field Decay}), (\ref{Heat Field Decay}),
and bounded geometry, we conclude
$A_x(s)$ converges in $C_x^\infty(\rr \to \mathfrak{so}(m))$ to
some limit $A_x(\infty)$ as $s \to \infty$.
Because of this convergence and the fact that $\phi(s)$
is spatially Schwartz uniformly in $s$, it follows from definition (\ref{A Definition})
of the connection coefficients $(A_\alpha)_{ab}$
that $\iota \circ e$, viewed as a linear transformation,
is locally bounded in $C_x^\infty$ uniformly in $s$.

From (\ref{Duhamel}) and (\ref{heatflow ex}) it follows that
\begin{equation*}
\lVert \partial_s \phi(s) \rVert_{L_x^\infty}
\leq
\lVert e^{s \Delta} \phi_0 \rVert_{L_x^\infty}
+
\int_0^s \lVert e^{(s - s^\prime)\Delta}
\left( \Pi(\phi)(\partial_x \phi, \partial_x \phi) \right)(s^\prime)
\rVert_{L_x^\infty} \ds^\prime,
\end{equation*}
wherein $e^{s \Delta}$ retains its meaning given by (\ref{heat kernel}) as is clear
from context (as opposed to referring to the caloric gauge $e$).
From Young's inequality and the fact that $\phi(s)$ is spatially
Schwartz in $x$ uniformly in $s$, we conclude
\begin{equation}
\int_0^\infty \lVert \partial_s \phi(s, \cdot) \rVert_{L_x^\infty} \ds
\lesssim_{\phi} 1,
\label{time deriV bound}
\end{equation}
which, upon integrating in (\ref{Frame evolution}), establishes uniform convergence
of $e(s, \cdot)$ to some limit $e(\infty, \cdot)$.
Using the local $C^\infty$ bounds for $\iota \circ e$
upgrades this to convergence in $C_{x, \mathrm{ loc}}^\infty$,
thereby showing $e(\infty, \cdot)$ is smooth.
Taking limits in the definition (\ref{A Definition}) of $A$
shows that indeed $A_x(\infty, \cdot)$ are the connection coefficients for the frame $e(\infty, \cdot)$.

Applying a smooth gauge transformation 
$U(s, x) = U(x)$ to normalize $e(\infty, \cdot) = e(\infty)$, we get the caloric gauge.
In this gauge $A_x(s)$ converges in $C_\mathrm{ loc}^\infty(\rr \to \mathfrak{so}(m))$
to zero as $s \to \infty$.  In particular, this normalization preserves the caloric gauge
condition $A_s = 0$ and specifies the boundary frame $e(\infty)$.

It remains to show that the $\psi_\alpha$ are Schwartz.  This is an easy
consequence of Corollary \ref{Field Cor} stated below.
\end{proof}

\begin{lemma}(Equations of motion).
Let $\phi: \mathbf{R}^+ \times \rr \to \cM$ be a heat flow with classical initial data 
whose energy $E_0$ is less than $\Ec$, let $e$ be a caloric gauge for $\phi$, 
and let $\psi_x$, $\psi_s$, $A_x$ be the associated derivative fields and connection fields.  
Then we have the evolution equations
\begin{align}
\partial_s \psi_x &= D_x \psi_s = \partial_x \psi_s + A_x \psi_s, \label{EOM1} \\
\partial_s A_x &= F_{xs}, \label{EOM2} \\
\partial_s \psi_\alpha &= D_i D_i \psi_\alpha + F_{\alpha i}. \label{EOM3}
\end{align}
\label{EquationsOfMotion}
\end{lemma}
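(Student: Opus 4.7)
The plan is to obtain all three identities by direct manipulation, leaning on the caloric gauge condition $A_s = 0$ (which implies $D_s = \partial_s$), the zero-torsion identity \eqref{NoTorsion3}, the curvature definition \eqref{F Def}, and the frame heat equation \eqref{Frame Heat}. No analytic estimates are required; the whole lemma is an algebraic unpacking of the structure established in \S \ref{S:Caloric gauge}.

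For \eqref{EOM1}, I would apply the zero-torsion relation $D_\alpha \psi_\beta = D_\beta \psi_\alpha$ to the pair $(\alpha,\beta) = (s, x_j)$. Because $A_s = 0$ makes the left-hand side equal to $\partial_s \psi_{x_j}$, this is immediate, and unfolding $D_{x_j} = \partial_{x_j} + A_{x_j}$ via \eqref{D Def} gives the second form displayed.

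For \eqref{EOM2}, I would substitute $A_s = 0$ directly into the definition \eqref{F Def}: both the bracket $[A_x, A_s]$ and the $\partial_x A_s$ term drop out, and what remains is $\partial_s A_x$ up to the sign convention already fixed by the derivation of \eqref{A Heat Evolution}.

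For \eqref{EOM3}, I would treat the $\alpha = s$ and $\alpha = x_j$ cases uniformly. In the spatial case one first applies \eqref{EOM1} to write $\partial_s \psi_{x_j} = D_{x_j} \psi_s$, and in the $\alpha = s$ case one simply uses $D_s = \partial_s$; in either situation \eqref{Frame Heat} then yields
\begin{equation*}
\partial_s \psi_\alpha \;=\; D_\alpha \psi_s \;=\; D_\alpha(D_i \psi_i).
\end{equation*}
Commuting with $[D_\alpha, D_i] = F_{\alpha i}$ rewrites this as $D_i D_\alpha \psi_i + F_{\alpha i}\psi_i$, and a final application of zero-torsion, $D_\alpha \psi_i = D_i \psi_\alpha$, collapses the first term to $D_i D_i \psi_\alpha$. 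This produces the claimed expression, with $F_{\alpha i}$ understood to act on $\psi_i$ through the natural pairing arising from \eqref{F R}. The only real bookkeeping is keeping the curvature-commutator sign consistent with \eqref{F Def}--\eqref{F R}; this is the main (and quite modest) obstacle.
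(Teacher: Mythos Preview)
Your proposal is correct and follows essentially the same route as the paper: both derive \eqref{EOM1} from $A_s=0$ plus zero-torsion, \eqref{EOM2} from $A_s=0$ plugged into \eqref{F Def}, and \eqref{EOM3} from the chain $\partial_s\psi_\alpha = D_\alpha\psi_s = D_\alpha D_i\psi_i = D_iD_\alpha\psi_i + F_{\alpha i}\psi_i = D_iD_i\psi_\alpha + F_{\alpha i}\psi_i$. Your explicit insertion of $\psi_i$ after $F_{\alpha i}$ is in fact a small clarification of the paper's shorthand.
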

\begin{proof}
The first equation (\ref{EOM1}) follows from $A_s = 0$, the zero-torsion property (\ref{NoTorsion3}), 
and (\ref{D Def}).

As already noted previously (see (\ref{A Heat Evolution})), 
(\ref{EOM2}) is a consequence of $A_s = 0$ and (\ref{F Def}).

To prove (\ref{EOM3}), we use $A_s = 0$ and sequentially 
apply (\ref{NoTorsion3}), (\ref{Frame Heat}),
(\ref{F Def}), and (\ref{NoTorsion3}):
\begin{align*}
\partial_s \psi_\alpha
&= D_\alpha \psi_s \\
&= D_\alpha D_i \psi_i \\
&= D_i D_\alpha \psi_i + F_{\alpha i} \\
&= D_i D_i \psi_\alpha + F_{\alpha i}.
\end{align*}
\end{proof}

\begin{proposition}(Connection bounds).
Let $\phi$ be a heat flow with classical initial data whose energy $E_0$ is less than $\Ec$.  
Let $e$ be a caloric gauge for $\phi$, and let $A_x$ denote the connection fields.  
Then we have the pointwise bounds
\begin{align}
\lVert \partial_x^k A_x(s) \rVert_{L_x^\infty(\rr)} &\lesssim_{E_0, k} s^{-(k+1)/2}, \label{A Infty} \\
\lVert \partial_x^k A_x(s) \rVert_{L_x^2(\rr)} &\lesssim_{E_0, k} s^{-k/2} \label{A L2}
\end{align}
for all $k \geq 0$ and $s > 0$, and the integrated estimates
\begin{align}
\int_0^\infty s^{(k-1)/2} \lVert \partial_x^k A_x(s) \rVert_{L_x^\infty(\rr)} \ds
&\lesssim_{E_0, k} 1, \label{Int A Infty}\\
\int_0^\infty s^{(k-1)/2} \lVert \partial_x^{k+1} A_x(s) \rVert_{L_x^2(\rr)} \ds
&\lesssim_{E_0, k} 1\label{Int A L2}
\end{align}
for all $k \geq 0$.
\end{proposition}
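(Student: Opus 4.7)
The plan is to integrate the evolution equation (\ref{EOM2}) against the boundary condition that $A_x(s)\to 0$ as $s\to\infty$, which was established in the proof of Theorem \ref{CalGauge EU}. This gives the fundamental representation
\begin{equation*}
\partial_x^k A_x(s) = -\int_s^\infty \partial_x^k F_{xs}(s')\,\ds',
\end{equation*}
from which all four claimed bounds will be derived by estimating $\partial_x^k F_{xs}$. Using $F_{xs} = (e^*\cR(\phi))(\psi_x,\psi_s)$, bounded geometry of $\cM$, and the identity $\partial_x = D_x - A_x$, we expand $\partial_x^k F_{xs}$ as a finite sum (with tensorial coefficients uniformly bounded on $\cM$) of products of the form $D_x^{j_0}\psi_x \cdot D_x^{j_1}\psi_s$ and lower-order correction terms involving $\partial_x^j A_x$ for $j<k$. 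The argument will therefore be inductive on $k$.

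For the pointwise bound (\ref{A Infty}), I will combine (\ref{Spatial Field Decay}) and (\ref{Heat Field Decay}) to obtain $\lvert \partial_x^k F_{xs}\rvert \lesssim_{E_0,k} s^{-(k+3)/2}$ (once the inductive bounds on $\partial_x^j A_x$, $j<k$, are used to convert $D_x$ to $\partial_x$); integrating from $s$ to $\infty$ yields $s^{-(k+1)/2}$. For the $L^2_x$ bound (\ref{A L2}), I will place one factor in $L^\infty_x$ and the other in $L^2_x$, using the energy estimates (\ref{ek2}) to extract $\lVert D_x^j \psi_x\rVert_{L^2_x} \lesssim s^{-j/2}$ and $\lVert D_x^j \psi_s\rVert_{L^2_x} \lesssim s^{-(j+1)/2}$; in the base case $k=0$, the naive pointwise estimate $\lVert F_{xs}\rVert_{L^2_x}\lesssim s^{-1}$ is not integrable near $s=\infty$, so I will instead integrate directly using Cauchy--Schwarz
\begin{equation*}
\int_s^\infty \lVert F_{xs}\rVert_{L^2_x}\,\ds' \le \Bigl(\int_0^\infty \lVert \e_1\rVert_{L^\infty_x}\,\ds'\Bigr)^{1/2} \Bigl(\int_0^\infty \bE_2(s')\,\ds'\Bigr)^{1/2}
\end{equation*}
and invoke (\ref{ek1}) with $k=1$ and (\ref{ek4}) with $k=1$.

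The integrated bounds (\ref{Int A Infty}) and (\ref{Int A L2}) are the most delicate, because the pointwise bounds alone yield logarithmically divergent integrals. Here I will apply Fubini to interchange the $ds$ and $ds'$ integrals:
\begin{equation*}
\int_0^\infty s^{(k-1)/2}\lVert \partial_x^k A_x(s)\rVert_{L^\infty_x}\,\ds \lesssim_k \int_0^\infty (s')^{(k+1)/2}\lVert \partial_x^k F_{xs}(s')\rVert_{L^\infty_x}\,\ds',
\end{equation*}
and similarly for the $L^2_x$ version. Decomposing $\lVert \partial_x^k F_{xs}\rVert_{L^\infty_x}\lesssim \lVert \sqrt{\e_{k+1}}\rVert_{L^\infty_x}\lVert \sqrt{\e_1}\rVert_{L^\infty_x} + (\text{lower order})$ and applying Cauchy--Schwarz in $s$ reduces everything to integrated energy estimates of the form (\ref{ek4}), which carry precisely the right weights $s^{k-1}$.

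The main obstacle is the inductive bookkeeping: to control $\partial_x^k A_x$ we must first express $\partial_x^k F_{xs}$ in terms of pure $\partial_x$-derivatives of $\psi_x$, $\psi_s$ and $A_x$, and the commutator terms arising from $\partial_x = D_x - A_x$ produce products of $\partial_x^j A_x$ with lower-order covariant derivatives of the $\psi$'s. These lower-order $A_x$ factors are exactly what is being estimated, so the induction must be structured so that the $j$th-order pointwise bound has been fully established before the $k$th-order integrated bound is invoked. Once this hierarchy is fixed, the remaining estimates follow from the energy estimates of Theorem \ref{Main Energy Estimates} together with Fubini, Cauchy--Schwarz, and H\"older.
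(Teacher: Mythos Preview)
Your proposal is correct and follows essentially the same strategy as the paper: integrate $\partial_s A_x = F_{xs}$ back from $s=\infty$, control covariant derivatives of $F_{xs}$ by products of energy densities via bounded geometry, and feed in the estimates (\ref{ek1})--(\ref{ek4}) from Theorem~\ref{Main Energy Estimates}. Two procedural differences are worth flagging. First, the paper works throughout with \emph{covariant} derivatives $D_x^k A_x$ (so the curvature bound (\ref{DA Integral Bound}) involves only energy densities and no $A_x$ factors), and converts to ordinary derivatives only at the very end via $\partial_x = D_x - A_x$ and the already-established $L^\infty$ bound (\ref{A Infty}); this sidesteps the inductive bookkeeping you describe. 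Second, for the integrated $L^\infty$ bound (\ref{Int A Infty}) with $k\geq 1$ the paper does not run your Fubini/Cauchy--Schwarz argument but instead derives it from (\ref{Int A L2}) by Gagliardo--Nirenberg interpolation (\ref{GN2}), treating only the $k=0$ case by the direct Fubini route you propose. Your uniform Fubini approach is perfectly valid and arguably more transparent; the paper's interpolation shortcut is slightly slicker once (\ref{Int A L2}) is in hand.
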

\begin{proof}
From (\ref{A Heat Evolution}) we have the integral representation
\begin{equation*}
A_x(s) = \int_s^\infty F_{sx}(s^\prime) \ds^\prime.
\end{equation*}
Differentiating covariantly as in (\ref{DA Decay}), we get
\begin{equation}
\lvert D_x^k A_x(s) \rvert
\lesssim
\int_s^\infty 
\sum_{ \substack{j_1 + j_2 + \cdots j_\ell = k + 3 \\ 
j_1, j_2, \ldots, j_\ell \geq 0 \\
\ell = 2, \ldots, k + 2} }
\e_{j_1}^{1/2}(s^\prime)
\cdots
\e_{j_\ell}^{1/2}(s^\prime) 
\ds^\prime,
\label{DA Integral Bound}
\end{equation}
and so applying (\ref{ek3}) yields
\begin{equation*}
\lVert D_x^k A_x(s) \rVert_{L_x^\infty(\rr)}
\lesssim_k
s^{-(k + 1)/2}
\end{equation*}
for $k \geq 0$.
Together (\ref{Matrix Action}) and an inductive argument prove (\ref{A Infty}).

Applying Minkowski's and H\"older's inequalities
in (\ref{DA Integral Bound}), we arrive at 
\begin{align}
\lVert D_x^k A_x(s) \rVert_{L_x^2}
\lesssim
\int_s^\infty
\sum_{ \substack{j_1 + j_2 + \cdots j_\ell = k \\ 
j_1, j_2, \ldots, j_\ell \geq 0 \\
\ell = 2, \ldots, k + 2} }
&\lVert \e_{j_1 + 2}^{1/2}(s^\prime) \rVert_{L_x^2} 
\lVert \e_{j_2 + 1}^{1/2}(s^\prime) \rVert_{L_x^\infty}
\nonumber \\
&\times
\lVert \e_{j_3}^{1/2}(s^\prime) \rVert_{L_x^\infty}
\cdots
\lVert \e_{j_\ell}^{1/2}(s^\prime) \rVert_{L_x^\infty} \ds^\prime.
\label{DA L2}
\end{align}
Using (\ref{ek1}), (\ref{ek4}), and Cauchy-Schwarz leads to (\ref{A L2})
with ordinary derivatives $\partial_x$ replaced by covariant derivatives $D_x$.
Ordinary derivatives may be recovered using (\ref{Matrix Action}) and (\ref{A Infty}).

Using (\ref{DA L2}) with $k+1$ in place of $k$ and applying the arithmetic- geometric-mean
inequality yields
\begin{align*}
\lVert D_x^{k+1} A_x(s) \rVert_{L_x^2}
\lesssim
&\sum_{ \substack{j_1 + j_2 + \cdots j_\ell = k \\ 
j_1, j_2, \ldots, j_\ell \geq 0 \\
\ell = 2, \ldots, k + 2} }
\int_s^\infty 
(s^\prime)^{j_1 - 1/2}
\lVert \e_{j+2}^{1/2}(s^\prime) \rVert_{L_x^2}^2 \\
&+ (s^\prime)^{-j_1 + 1/2} 
\lVert \e_{j_2+1}^{1/2}(s^\prime) \rVert_{L_x^\infty}^2
\lVert \e_{j_3}^{1/2}(s^\prime) \rVert_{L_x^\infty}^2 
\cdots
\lVert \e_{j_\ell}^{1/2}(s^\prime) \rVert_{L_x^\infty}^2 \ds^\prime,
\end{align*}
and hence by using (\ref{ek1}) and Fubini on the first term of the right hand side
and (\ref{ek3}), (\ref{ek4}), and Fubini on the second it follows that inequality
(\ref{Int A L2}) holds with ordinary derivatives replaced by covariant derivatives.
To recover ordinary derivatives in (\ref{Int A L2}), again use (\ref{Matrix Action}) and (\ref{A Infty}).

Finally, note that if $k \geq 1$, then applying the Gagliardo-Nirenberg inequality
(\ref{GN2}) to (\ref{Int A L2}) gives (\ref{Int A Infty}).  Therefore it only remains to check
the $k = 0$ case:
Using (\ref{DA Integral Bound}) and Fubini yields
\begin{equation*}
\int_0^\infty s^{-1/2} \lVert \partial_x A_x(s) \rVert_{L_x^2} \ds
\lesssim
\int_0^\infty s^{-1/2} \lVert \e_2^{1/2}(s) \rVert_{L_x^\infty}
\lVert \e_1^{1/2}(s) \rVert_{L_x^\infty} \ds,
\end{equation*}
and so the inequality follows from Cauchy-Schwarz and (\ref{ek4}).
\end{proof}

\begin{corollary}
Let $\phi$ be a heat flow with classical initial data with energy $E_0$ 
less than $\Ec$.  Let $e$ be
a caloric gauge for $\phi$. Then
\begin{align*}
\int_0^\infty s^{k-1} \lVert \partial_x^k \psi_x \rVert_{L_x^2(\rr)}^2 \ds
&\lesssim_{E_0, k} 1, \\
\sup_{s > 0} s^{(k-1)/2} \lVert \partial_x^{k-1} \psi_x \rVert_{L_x^2(\rr)}
&\lesssim_{E_0, k} 1, \\
\int_0^\infty s^{k-1} \lVert \partial_x^{k-1} \psi_x \rVert_{L_x^\infty(\rr)}^2 \ds
&\lesssim_{E_0, k} 1, \\
\sup_{s > 0} s^{k/2} \lVert \partial_x^{k-1} \psi_x \rVert_{L_x^\infty(\rr)}
&\lesssim_{E_0, k} 1
\end{align*}
for all $k \geq 1$.  Analogous estimates hold if one replaces $\partial_x \psi_x$ with $\psi_s$,
$\partial_x^2$ with $\partial_s$ and/or $\partial_x$ with $D_x$.
\label{Field Cor}
\end{corollary}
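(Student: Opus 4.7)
\textbf{Proof plan for Corollary \ref{Field Cor}.} The strategy is to first establish the four estimates with covariant derivatives $D_x$ in place of ordinary derivatives $\partial_x$ (in which case they are tautologous consequences of Theorem \ref{Main Energy Estimates}), then pass from covariant to ordinary derivatives using the connection bounds of the previous proposition, and finally handle $\psi_s$ via the equations of motion.

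Step one is immediate. Because $e$ is an orthonormal frame, we have the pointwise identification $\lvert D_x^{k-1} \psi_x \rvert^2 = \e_k$, and so the four bounds with $D_x$ in place of $\partial_x$ read respectively as
\begin{equation*}
\int_0^\infty s^{k-1} \bE_{k+1}(s) \ds, \quad
\sup_{s > 0} s^{k-1} \bE_k(s), \quad
\int_0^\infty s^{k-1} \lVert \e_k(s) \rVert_{L_x^\infty} \ds, \quad
\sup_{s,x} s^k \e_k(s,x),
\end{equation*}
each of which is directly controlled by one of (\ref{ek1})--(\ref{ek4}).

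For step two, I expand iteratively using $\partial_x = D_x - A_x$ to write
\begin{equation*}
\partial_x^k \psi_x = D_x^k \psi_x + \sum c \cdot (\partial_x^{a_1} A_x) \cdots (\partial_x^{a_\ell} A_x) \, D_x^b \psi_x,
\end{equation*}
with $\ell \geq 1$, $a_i \geq 0$, and $(a_1 + 1) + \cdots + (a_\ell + 1) + b = k$. Each error term is estimated by H\"older's inequality: place all $A_x$-factors save at most one into $L_x^\infty$, together with $\psi_x$ if we seek an $L_x^2$-bound for the term, and balance via (\ref{A Infty})--(\ref{Int A L2}) and the covariant version already proven. Crucially, since by (\ref{A Infty}) we have $\lVert \partial_x^{a} A_x \rVert_{L_x^\infty} \lesssim s^{-(a+1)/2}$, a product of $\ell$ such factors contributes a power $s^{-(a_1 + \cdots + a_\ell + \ell)/2}$; multiplied by the covariant bound on $D_x^b \psi_x$, the total scaling in $s$ agrees with the target bound on $\partial_x^k \psi_x$ (this consistency is forced by the dilation symmetry (\ref{dil sym})). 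The same argument recovers the analogous $L_x^\infty$-estimates, where the integrated connection bound (\ref{Int A Infty}) is applied to the one designated $L_s^2$-factor.

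Finally, for the estimates with $\psi_s$ in place of $\partial_x \psi_x$, apply the equations of motion (\ref{EOM1}) and (\ref{Frame Heat}): $\psi_s = D_j \psi_j$ and $\partial_s \psi_x = D_x \psi_s$. Commuting covariant derivatives (which introduces curvature terms $F = (e^*\cR(\phi))(\psi_x, \psi_x)$, estimated by products of $\e_{\cdot}^{1/2}$'s via (\ref{F R}) and bounded geometry) expresses $D_x^{m} \psi_s$ and $\partial_s$-derivatives as polynomials in covariant derivatives of $\psi_x$ of weighted order one higher. The resulting expressions are then handled exactly as in steps one and two. The only real obstacle is the bookkeeping of error terms in step two, but this bookkeeping is routine once one observes that the scaling weights in the connection bounds and the energy hierarchy are precisely matched to the dilation symmetry of the heat flow, so that every error term carries the correct power of $s$ by construction.
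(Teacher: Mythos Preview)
Your proposal is correct and follows exactly the paper's approach: the paper's one-line proof reads ``This follows from Theorem \ref{Main Energy Estimates}, (\ref{A Infty}), and writing $\partial_x = D_x - A_x$,'' which is precisely your steps one and two. One minor simplification: because every $A_x$-factor can be placed in $L_x^\infty$ via (\ref{A Infty}) alone (the scaling then matches automatically, as you observe), the integrated bounds (\ref{Int A Infty})--(\ref{Int A L2}) are not actually needed here.
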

\begin{proof}
This follows from Theorem \ref{Main Energy Estimates}, (\ref{A Infty}), and
writing $\partial_x = D_x - A_x$.
\end{proof}

\subsection{The dynamic caloric gauge}

\begin{thm}(Dynamic caloric gauge).
Let $I$ be a time interval, let $\phi : \rr \times I \to \cM$ 
be a smooth map
Schwartz on each time slice with respect
to the point $\phi(\infty) \in \cM$,
and
let $e(\infty) \in \mathrm{ Frame}(T_{\phi(\infty)} \cM)$ be a frame for $\phi(\infty)$.  
Assume moreover that there is some $\epsilon > 0$ so that $E(\phi(t)) < \Ec - \epsilon$
for all $t \in I$.

Then we have the following conclusions.
The map $\phi$ extends smoothly to a dynamic heat flow
$\phi : \mathbf{R}^+ \times \rr \times I \to \cM$ and converges to $\phi(\infty)$ in
$C_\mathrm{loc}^\infty(\rr \times I)$ as $s \to \infty$.
There exists a unique smooth frame
$e \in \Gamma(\mathrm{Frame}(\phi^* T \cM))$ such that $e(t)$ is a caloric gauge for $\phi(t)$
that equals $e(\infty)$ at infinity for each $t \in I$.  
The map $\phi$ is Schwartz with respect to $\phi(\infty)$ and
all derivatives of $\phi$
in $s, x, t$ are Schwartz in $x$ for each fixed $s, t$.
Moreover, the time derivative field $\psi_t$ obeys
\begin{equation}
\partial_s \psi_t = D_i D_i \psi_t + F_{ti}
\label{EOM3t}
\end{equation}
and the time connection field $A_t$ obeys
\begin{equation}
\partial_s A_t = F_{ts}.
\label{EOM2t}
\end{equation}
\label{Dynamic CalGauge}
\end{thm}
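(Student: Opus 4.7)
The plan is to reduce to the static theory pointwise in $t$ and then upgrade to joint smoothness and Schwartz decay via parameter dependence for the parabolic heat flow. For each fixed $t \in I$, the hypotheses make $\phi(\cdot, t)$ classical initial data with energy bounded by $\Ec - \epsilon$, so Theorem \ref{Main Energy Estimates} produces a unique smooth heat flow extension $\phi(\cdot, \cdot, t)$ and Corollary \ref{Convergence Cor} gives the convergence in $C_x^\infty(\rr \to \cM)$ as $s \to \infty$; Theorem \ref{CalGauge EU} then produces, for each $t$, the unique caloric gauge $e(\cdot, \cdot, t)$ with boundary frame $e(\infty)$. The real work lies in upgrading this family to a jointly smooth object with the claimed Schwartz behavior and in establishing the time evolution equations.

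Joint smoothness in $(s, x, t)$ of $\phi$ I would obtain by differentiating the extrinsic Duhamel formula (\ref{heatflow Duhamel}) in $t$, which formally yields a linear parabolic equation for $\partial_t \phi$ with Schwartz initial data (by hypothesis on the input map) and smooth, spatially bounded coefficients coming from bounded geometry of $\cM$ and the already-known bounds on $\partial_x \phi$. A Picard/weighted-space argument of the same style as in Theorem \ref{LWP}, but with $t$ as a parameter, propagates smoothness and rapid spatial decay for $\partial_t \phi$, and iterating in $t$-derivatives and mixing with $\partial_s, \partial_x$ (using the heat flow equation to trade $\partial_s$ for spatial derivatives, as in Corollary \ref{Convergence Cor}) yields the claim that all $s,x,t$-derivatives of $\phi$ are Schwartz in $x$. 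Crucially, the quantitative estimates (\ref{ek1})--(\ref{ek4}) depend only on $E_0 \leq \Ec - \epsilon$ and are therefore uniform in $t$, which is what allows this bootstrap to close uniformly on $I$.

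For the gauge, I would follow the existence construction in Theorem \ref{CalGauge EU} with $t$ tagged along. Start with a smooth-in-$(x,t)$ initial frame $e(0, x, t)$ that is Schwartz-perturbed from a constant frame, solve the parallel transport ODE $(\phi^* \nabla)_s e_a = 0$ (whose coefficients depend smoothly on $(s, x, t)$ by the previous step) to obtain $e(s, x, t)$, and normalize at $s = \infty$ by a gauge transformation $U(x, t) \in SO(m)$. The uniform-in-$t$ connection bounds (\ref{A Infty}) and the uniform-in-$t$ version of (\ref{time deriV bound}) ensure that $e(s, \cdot, t)$ converges in $C^\infty_{x, \mathrm{loc}}$ as $s \to \infty$ smoothly in $t$, making $U$ smooth and the resulting caloric gauge $e$ smooth in $(s, x, t)$. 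The evolution equations (\ref{EOM3t}) and (\ref{EOM2t}) are then derived exactly as in Lemma \ref{EquationsOfMotion} with the index $t$ playing the role of $\alpha$: use $A_s = 0$ together with the torsion-free property (\ref{NoTorsion3}) and the curvature identity (\ref{F Def}) in the form $[D_s, D_t] = F_{st}$ and $[D_s, D_i] = F_{si}$.

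The main obstacle is controlling the limit at $s = \infty$ uniformly in $t$: the caloric gauge is defined by a boundary condition at $s = \infty$, not at $s = 0$, and the normalizing transformation $U(x, t)$ is constructed from the $s \to \infty$ limit of the parallel-transported frame. I therefore expect to spend most of the proof tracking how the $t$-derivatives of $\phi$, of the connection $A_x$, and of the parallel-transport ODE interact at the $s = \infty$ endpoint, verifying that differentiating under the limit is justified and that the resulting $U(x, t)$ is smooth in $t$ with derivatives Schwartz in $x$. Once this is in hand, all remaining claims follow from routine applications of the equations of motion and the quantitative estimates already established in the static setting.
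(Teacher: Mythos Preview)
Your proposal is correct and follows essentially the same approach as the paper: reduce to the static theory pointwise in $t$ via Theorem \ref{Main Energy Estimates} and Theorem \ref{CalGauge EU}, upgrade to joint smoothness using smooth dependence of the Picard iteration on initial data together with the uniform-in-$t$ energy bounds (since $E(\phi(t)) \leq \Ec - \epsilon$), and derive (\ref{EOM3t}), (\ref{EOM2t}) exactly as in Lemma \ref{EquationsOfMotion}. The paper's proof is terser than yours—it simply asserts that the caloric gauge construction ``follows that in Theorem \ref{CalGauge EU}'' with $t$ carried along—whereas you correctly flag the $s \to \infty$ normalization $U(x,t)$ as the place requiring care; but your plan for handling it (uniform-in-$t$ connection bounds and the uniform version of (\ref{time deriV bound})) is the right one and is implicit in the paper's appeal to the static argument.
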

\begin{proof}
The proof of uniqueness is as in Theorem \ref{CalGauge EU}.
Without loss of generality we take $I$ compact.
For each fixed $t$ we can extend the initial data $\phi(\cdot, t) : \rr \to \cM$ to
a smooth heat flow $\phi(\cdot, \cdot, t) : \mathbf{R}^+ \times \rr \to \cM$
by Theorem \ref{Main Energy Estimates}.  
As the global solution provided by Theorem \ref{Main Energy Estimates} coincides
with that directly provided locally by Picard iteration, the extension $\phi$
depends smoothly on the initial data since the nonlinearity being iterated
is a smooth function of $\phi$.
Noting that the constants in the bounds
in Theorem \ref{Main Energy Estimates} depend only upon the initial energy of the map
and recalling that we
assume $E(\phi(0, \cdot , t))$ is less than and bounded away from $\Ec$ uniformly
in $t$, we have that
$t \mapsto \phi(\cdot, \cdot, t)$ is locally smooth in smooth
topologies, and therefore the fixed-time heat flows
$\phi(\cdot, \cdot, t)$ can be joined together to create a smooth dynamic heat flow
$\phi : \mathbf{R}^+ \times \rr \times I \to \cM$.
Uniform bounds in the limit follow from the bounds in Theorem \ref{Main Energy Estimates}.
We note that in order to show that derivatives of $\phi$ involving the $t$ variable are
Schwartz in $x$ for fixed $s, t$, it suffices to directly apply a Picard iteration scheme as
in Theorem \ref{LWP}.

The rest of the construction of the caloric gauge follows that in Theorem \ref{CalGauge EU},
except that now we must take into account the dynamic variable $t$ and associated fields
$\psi_t, A_t$.  In particular,
the proofs of (\ref{EOM3t}) and (\ref{EOM2t}) are analogous to those of (\ref{EOM3}) and
(\ref{EOM2}) of Lemma \ref{EquationsOfMotion}.
\end{proof}

\nline
\emph{Acknowledgments.}
The author is indebted to Terence Tao for his encouragement, support, and guidance, and
thanks Peter Petersen for his teaching and discussions.  The author would also like
to thank the anonymous referees for corrections and suggestions, and for pointing out in an earlier
draft the need for an additional argument to handle the noncompact case.

\appendix

\section{Heat flow into noncompact manifolds} \label{Appendix Noncompact}
We claimed that the
caloric gauge construction is valid for any bounded geometry manifold
$\cM$, even though for convenience we made
an additional technical assumption.
In particular, we assumed that $\cM$ was a closed manifold, 
i.e., compact and without boundary.
Here we discuss how to remove this technical assumption.

First let us introduce an intrinsic notion of a Schwartz function
from $\mathbf{R}^2 \to \cM$.  As discussed in
\S \ref{S:Introduction}, we must choose a base point $p \in \cM$
to which our functions shall decay since there is no such natural choice on
a general manifold; however, once this point is selected, 
there is the following natural intrinsic definition of a Schwartz function.
\begin{definition}
Let $\cM$ be a manifold of bounded geometry and let $p \in \cM$.
A function $\phi : \mathbf{R}^2 \to \cM$ is said to be \emph{Schwartz
with respect to $p$} provided $\phi \in C^\infty(\mathbf{R}^2 \to \cM)$
and provided $\phi$ satisfies the decay conditions
\begin{equation*}
d_{\cM}( \phi(x), p ) = O_{\phi, N}( \langle x \rangle^{-N} )
\end{equation*}
for all positive integers $N$ and
\begin{equation*}
(\phi^* \nabla)_x^{\alpha} \phi(x) = O_{\phi, \alpha, N}( \langle x \rangle^{-N} )
\end{equation*}
for all (nonzero) multiindices $\alpha$ and positive integers $N$.
\end{definition}
When $\cM$ is a compact manifold, this definition is equivalent to that given
in \S \ref{S:Introduction}.  This is shown with a direct argument 
by choosing an smooth
isometric embedding $\iota: \cM \hookrightarrow \mathbf{R}^n$.

Equipped with a suitable definition of Schwartz function, we now give the underlying
idea in moving from the compact case to the noncompact case, due
to \cite{LiTa91}.
Start with Schwartz initial data $\phi_0 : \mathbf{R}^2 \to \cM$.  In view
of the fast-decay of $\phi_0$ and the fact that it is smooth, 
it follows that the image of $\mathbf{R}^2$
under $\phi_0$ lies within a compact set of $\cM$.  One then sets up an
embedding into a Euclidean space $\mathbf{R}^n$ to show local existence
and proves that the heat flow $\phi$ generated from the initial data $\phi_0$
stays within a compact subset of $\cM$ over its entire (maximal) interval of existence.
The rigorous argument is found in \cite[\S 3]{LiTa91}.
A posteriori, we choose a compact subset of $\cM$, choose a smooth isometric
embedding into some Euclidean space $\mathbf{R}^n$,
and apply the Duhamel formula (\ref{heatflow Duhamel}) as in
\S \ref{S:Small Energy Estimates} in order to establish that the heat flow $\phi$ is extrinsically 
Schwartz.  That the heat flow is also intrinsically Schwartz then follows from the fact that
the flow has a precompact image in $\cM$.
The blowup criterion derived from the proof is the same as that for the compact case:
the flow may be continued so long as $\partial_x \phi$ remains bounded.

We therefore have an entirely intrinsic analogue of Theorem \ref{LWP}, 
and, as indicated in \cite[Remark 3.1]{LiTa91}, we know that there is a compact set
in $\cM$ in which the heat flow $\phi$ always remains.

We now discuss where, in addition to Theorem \ref{LWP},
compactness was exploited.
Corollary \ref{Convergence Cor} employs compactness explicitly 
in a very mild way, namely, to establish that smooth solutions are Schwartz.  
Based upon the proof of the corollary, it is clear that the intrinsic version of 
Theorem \ref{LWP} suffices for this application.

A more subtle, implicit use of compactness is made in 
\S \ref{S:Minimal blowup solutions}, where the minimal blowup
solution argument appeals to the energy quantization of harmonic maps
(in particular, the existence of the groundstate energy $\Ec$)
and to the bubbling profile result \cite{St85} of Struwe.

For noncompact target manifolds we slightly modify the usual definition of
groundstate energy $\Ec$.
Given a compact subset
$K$ of $\cM$, we may smoothly extend $K$ to a larger compact (Riemannian) 
manifold $\widetilde{K}$ without boundary, i.e., a closed manifold.  Associate to
the manifold $\widetilde{K}$ its groundstate energy $\Ec(\widetilde{K})$
(as defined in \S \ref{S:Introduction}).
Now define the
groundstate energy of $K$ to be the supremum of energies $\Ec(\widetilde{K})$
taken over all smooth, closed extensions $\tilde{K}$ of $K$.  Finally, set
\begin{equation*}
\Ec(\cM) := \limsup_{n \to \infty} \Ec(K_n),
\end{equation*}
where $K_n$ denotes a (nested) compact exhaustion of $\cM$, i.e.,
each $K_n$ is compact, $K_{n} \subset K_{n+1}$, and
$\cup_n K_n = \cM$.
To show that $\Ec(\cM)$ is well-defined, one may suitably interlace two
given compact exhaustions $K_n$, $J_m$.

\begin{lemma}
For any dimension $m$ we have
\begin{equation*}
\Ec(\mathbf{H}^m) = +\infty,
\end{equation*}
i.e., the ground state energy of hyperbolic space is infinite.
\end{lemma}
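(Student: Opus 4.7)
The plan is to exploit the supremum built into the definition of $\Ec$ on a compact subset together with the negative curvature of $\mathbf{H}^m$. For each compact $K_n$ in a compact exhaustion of $\mathbf{H}^m$, I will exhibit a single closed extension $\widetilde{K}_n$ for which $\Ec(\widetilde{K}_n) = +\infty$; since $\Ec(K_n)$ is a supremum over all such extensions, this forces $\Ec(K_n) = +\infty$ for every $n$, and hence $\Ec(\mathbf{H}^m) = \limsup_n \Ec(K_n) = +\infty$.

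The extension is produced as follows. Given a compact $K \subset \mathbf{H}^m$, fix $p \in \mathbf{H}^m$ and $R > 0$ with $K \subset B(p, R)$. The key geometric input is that for every $m \geq 2$ there exists a closed hyperbolic $m$-manifold $M$ of injectivity radius exceeding $R$: starting from any closed hyperbolic $m$-manifold $M_0$ (elementary for $m = 2,3$, and available in all dimensions via arithmetic quotients of $\mathbf{H}^m$), pass to a sufficiently deep finite cover $M \to M_0$. Since $\pi_1(M_0)$ embeds in the linear group $SO(m,1)$, it is residually finite by Mal'cev's theorem, so finite-index subgroups can be chosen whose nontrivial elements all have translation length $> 2R$, yielding $\mathrm{inj}(M) > R$. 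The Riemannian covering $\mathbf{H}^m \to M$ then restricts to an isometric embedding on $\overline{B(p,R)}$, so $K$ sits isometrically in $M$, making $M$ a valid closed Riemannian extension of $K$.

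It remains to show $\Ec(M) = +\infty$, i.e., that $M$ admits no nontrivial finite-energy smooth harmonic map from $\mathbf{R}^2$. Since $M$ is compact, the Sacks--Uhlenbeck removable singularity theorem promotes any such map to a smooth harmonic map $\phi : S^2 \to M$. Because $S^2$ is simply connected, $\phi$ lifts along the universal covering to a smooth harmonic map $\tilde\phi : S^2 \to \mathbf{H}^m$. Fix $q \in \mathbf{H}^m$ and set $f(x) := d_{\mathbf{H}^m}(\tilde\phi(x), q)^2$. On the Hadamard manifold $\mathbf{H}^m$ the function $d^2(\cdot, q)$ is smooth and strictly convex with Hessian bounded below by twice the metric (Rauch/Toponogov comparison); combined with the standard composition identity $\Delta_{S^2}(g \circ \tilde\phi) = \mathrm{Hess}(g)(d\tilde\phi, d\tilde\phi)$ valid for harmonic $\tilde\phi$, this yields
\begin{equation*}
\Delta_{S^2} f \;\geq\; 2\, |d\tilde\phi|^2.
\end{equation*}
Integrating over the closed surface $S^2$ kills the left-hand side, forcing $|d\tilde\phi| \equiv 0$, so $\tilde\phi$ and hence $\phi$ are constant.

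The main obstacle is the geometric step of producing closed hyperbolic $m$-manifolds of arbitrarily large injectivity radius into which a prescribed ball of $\mathbf{H}^m$ embeds isometrically, ultimately resting on residual finiteness of lattices in $SO(m,1)$. The subharmonicity argument in the last paragraph and the removable singularity appeal are routine once the extension is in hand.
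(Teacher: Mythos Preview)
Your argument is correct and follows the same route as the paper: produce a closed hyperbolic quotient of $\mathbf{H}^m$ containing the given compact set isometrically, then rule out nontrivial finite-energy harmonic maps into it by lifting to $\mathbf{H}^m$. The paper does this in two sentences, invoking ``a suitable discrete group action'' for the extension and citing Eells--Sampson \cite[\S 11]{EeSa64} for the nonexistence step; you instead supply the details (residual finiteness of lattices for large injectivity radius, Sacks--Uhlenbeck to pass to $S^2$, and the convexity/subharmonicity computation), which is a self-contained elaboration of the same idea rather than a different approach.
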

\begin{proof}
Any compact $K \subset \mathbf{H}^m$ can be smoothly extended to a closed manifold
$\tilde{K}$
by means of taking the quotient of $\mathbf{H}^m$ by a suitable discrete group action.
However, as shown in \cite[\S 11]{EeSa64},
finite energy harmonic maps from $\mathbf{R}^2 \to \mathbf{H}^m$
do not exist.  Therefore
$\Ec(\tilde{K}) = +\infty$.
\end{proof}

Now, given Schwartz initial data $\phi_0 : \mathbf{R}^2 \to \cM$ with $E(\phi_0) < \Ec(\cM)$,
we may choose a sufficiently large compact subset $K$ of $\cM$ and suitable
smooth closed manifold extension $\tilde{K}$ of $K$ so that
$E(\phi_0) < \Ec(\tilde{K})$ 
and $\phi : I \times \mathbf{R}^2 \to \cM$ remains within $K$ for
its entire interval of existence $I$.
Replacing $\cM$ with $\tilde{K}$, i.e., considering now $\phi$ as a heat flow into
$\tilde{K}$, we may proceed with the arguments of \S \ref{S:Minimal blowup solutions}
verbatim.\footnote{As an alternative, one may proceed with a more direct argument,
only modifying the relevant parts of proofs as needed; the complete the details
of this approach, however, seem to involve taking a long tour through the extensive
literature on harmonic maps and harmonic map heat flow, most of which
assumes the target manifold to be a closed manifold.}

Finally, there are two additional places that make some appeal to compactness.
Theorem \ref{CalGauge EU} uses compactness
in order to show local uniform boundedness of the frame $e$ and 
to show integrability of 
$\lVert \partial_s \phi(s) \rVert_{L^\infty_x}$.  
In view of the result of \S \ref{S:Minimal blowup solutions}, though,
solutions are global, Schwartz, and with precompact image in $\cM$:
therefore we may still use an explicit embedding, and
all arguments carry over without modification.
Theorem \ref{Dynamic CalGauge} employs compactness in a similar way;
however, care must be taken if one desires to move from a local dynamic time
interval to a global one.  In particular, in order to attain
uniform bounds, one must ensure that the particular
dynamics in play keep $\phi$ within a bounded set.

\section{An energy space}
We propose an intrinsically defined energy space.  As an application
we have in view the study of the Schr\"odinger map initial value problem
\begin{equation*}
\begin{cases}
\partial_t \phi &= J(\phi) (\phi^* \nabla)_j \partial_j \phi \\
\phi(0) &= \phi_0
\end{cases}
\end{equation*}
on a K\"ahler manifold $\cM$ with complex structure $J$.  See for instance
\cite{Mc07} for definitions.
Our approach is motivated by that pursued by Tao in \cite{T4} in the study of wave maps.
The present setting is slightly simpler as compared to the wave maps setting in the sense
that our Cauchy data includes only an initial position $\phi_0$ as opposed to both
an initial position $\phi_0$ and initial velocity $\partial_t \phi_0$.

Let $\cL$ denote the Hilbert space of pairs $(\psi_s, \psi_x)$ of measurable functions
$\psi_s : \mathbf{R}^+ \times \rr \to \mathbf{R}^m$ and
$\psi_x : \mathbf{R}^+ \times \rr \to \mathbf{R}^m \times \mathbf{R}^m$ whose norm
\begin{equation}
\lVert (\psi_s, \psi_x) \rVert_{\cL}
:=
\frac{1}{2} \int_0^\infty \int_\rr \lvert \psi_s \rvert^2 \dx ds
+
\frac{1}{4} \int_\rr \lvert \psi_x \rvert^2 \dx
\label{L norm def}
\end{equation}
is finite.
Note that the orthogonal group $SO(m)$ acts unitarily on $\mathbf{R}^m$ and so naturally on $\cL$
unitarily.
Quotienting out by this compact group, we obtain a metric space $SO(m) \backslash \cL$.

Given classical initial data $\phi_0$ with energy $E_0 < \Ec$, consider
its heat flow extension $\phi(s, x)$ from $\rr$ to $\mathbf{R}^+ \times \rr$.
Choose a caloric gauge
$e$ for $\phi_0$ that equals some fixed frame $e(\infty) \in \mathrm{ Frame}(T_{\phi_0(\infty)} \cM)$
at infinity.
Let $S_p$ denote the set of functions $\rr \to \cM$ 
differing from $p \in \cM$ by a Schwartz function.
Hence $\phi_0 \in S_p$ with $p := \phi_0(\infty) \in \cM$.

Define the \emph{nonlinear Littlewood-Paley resolution map} 
$\jmath : \cS_p \to SO(m) \backslash \cL$ by
\begin{equation*}
\jmath(\phi_0) := SO(m)(\psi_s, \psi_x).
\end{equation*}
Rotating the frame $e(\infty)$ rotates the fields $\psi_s$, $\psi_x$ by an element of $SO(m)$,
and thus the only arbitrary choice we have made is $p = \phi_0(\infty)$.
We define the energy space $\ES_p$ to be the closure of $\jmath(\cS_p)$ in
$SO(m) \backslash \cL$.

\begin{lemma}(Energy identity).
For any $\phi \in \cS_p$ we have
\begin{equation}
E(\phi_0) = d_{SO(m) \backslash \cL}(j(\phi_0), 0).
\label{ES energy identity}
\end{equation}
\end{lemma}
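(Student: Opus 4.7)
The plan is to collapse the defining sum in (\ref{L norm def}) into two contributions of $\tfrac12 E(\phi_0)$ each. First I would note that the $SO(m)$-action on $\cL$ is unitary and fixes the origin, so the quotient distance from $\jmath(\phi_0)$ to $[0]$ reduces to $\lVert(\psi_s,\psi_x)\rVert_\cL$. Next, since $e(0,\cdot)$ is an orthonormal frame for $\phi_0^* T\cM$, one has $|\psi_x(0,x)|^2 = |\partial_x\phi_0(x)|^2_{\phi_0^* h}$, so the boundary-slice term in (\ref{L norm def}), read as $\psi_x$ evaluated at $s=0$, already contributes exactly
\begin{equation*}
\frac14 \int_\rr |\psi_x(0,x)|^2 \dx \;=\; \frac12 E(\phi_0).
\end{equation*}

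The remaining work is to show the parabolic term contributes the other $\tfrac12 E(\phi_0)$, i.e.,
\begin{equation*}
\int_0^\infty \int_\rr |\psi_s(s,x)|^2 \dx \ds \;=\; E(\phi_0).
\end{equation*}
For this I would invoke Proposition \ref{EnergyMono}: combined with the orthonormality identity $|\psi_s|^2 = |\partial_s\phi|^2_\ph$, it reads
\begin{equation*}
\partial_s E(\phi(s)) \;=\; -\int_\rr |\psi_s(s,x)|^2 \dx,
\end{equation*}
and integrating in $s$ over $[0,\infty)$ yields
\begin{equation*}
\int_0^\infty \int_\rr |\psi_s|^2 \dx\,\ds \;=\; E(\phi_0) - \lim_{s\to\infty}E(\phi(s)).
\end{equation*}
Finiteness of the left-hand side is automatic from the pointwise bound $|\psi_s|^2 = |(\pd)_j\partial_j\phi|^2 \lesssim \e_2$ together with (\ref{ek1}) at $k=1$.

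The main obstacle is therefore to establish $\lim_{s\to\infty}E(\phi(s))=0$. My proposed route is the bubbling analysis already set up in \S\ref{S:Minimal blowup solutions}: integrability of $\int_\rr|\psi_s(s)|^2\dx$ in $s$ produces a sequence $s_n\to\infty$ along which the tension field $\psi_s(s_n)\to 0$ in $L^2_x$; Struwe's compactness result (\cite[Proposition 5.1]{St85}, as already exploited in the proof that $E^*=\Ec$) then extracts from $\phi(s_n,\cdot)$ an $H^2_{x,\mathrm{loc}}$ limit that is a harmonic map with energy $\lim_{s\to\infty}E(\phi(s))$, modulo possible concentration bubbles. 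The hypothesis $E(\phi_0)<\Ec$ rules out both bubble formation (which needs at least $\Ec$ of energy) and the persistence of a nontrivial harmonic map in the limit, so the limiting energy must vanish. Combining the three contributions yields $d_{SO(m)\backslash\cL}(\jmath(\phi_0),0) = \lVert\jmath(\phi_0)\rVert_\cL = E(\phi_0)$, as required.
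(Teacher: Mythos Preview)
Your argument matches the paper's almost exactly: both reduce the identity to the energy-monotonicity relation $\partial_s E(\phi(s))=-\int_{\rr}|\psi_s|^2\,dx$ (the paper rederives this in frame language via (\ref{NoTorsion3}) and (\ref{Frame Heat}), while you simply cite Proposition~\ref{EnergyMono}), then integrate over $[0,\infty)$ and use $E(\phi(s))\to 0$ to collapse the two contributions in (\ref{L norm def}) to $\tfrac12 E(\phi_0)$ each.

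The only substantive divergence is in how you justify the limit $E(\phi(s))\to 0$. The paper asserts it in one clause, pointing to energy monotonicity together with $E_0<\Ec$ (implicitly relying on the global estimates of Theorem~\ref{Main Energy Estimates}). Your route through Struwe's bubbling theorem is considerably heavier and carries a subtle caveat on the noncompact base $\rr$: the compactness in \cite[Proposition~5.1]{St85} is only in $H^2_{x,\mathrm{loc}}$, so even after ruling out bubbles you cannot conclude that the constant limit carries the full energy $\lim_{s\to\infty}E(\phi(s))$ without a separate argument precluding energy escape to spatial infinity. This can be patched (e.g.\ via the Schwartz structure of the data or the local energy inequality (\ref{EnergyConcentration})), but as written it is a gap, and in any case the detour is unnecessary given what the paper has already established.
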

\begin{proof}
The left hand side of (\ref{ES energy identity}) may be written as
\begin{equation*}
\frac{1}{2} \int_\rr \lvert \psi_x(0, \cdot) \rvert_{\phi_0^*h}^2 \dx.
\end{equation*}
Applying (\ref{NoTorsion3}) yields
\begin{equation*}
\partial_s \lvert \psi_x \rvert^2 = 2 \psi_j \cdot D_j \psi_s
\end{equation*}
and thus by integrating by parts and using (\ref{Frame Heat})
we obtain
\begin{align}
\partial_s \frac{1}{2} \int_\rr \lvert \psi_x(s, \cdot) \rvert^2 \dx
&=
-  \int_\rr D_j \psi_j \cdot \psi_s \dx \nonumber \\
&=
-  \int_\rr \lvert \psi_s \rvert^2 \dx. \label{ftc app}
\end{align}
As $E(\phi_0(s)) \to 0$ as $s \to \infty$ due to the energy monotonicity of the heat flow
and the fact that $E_0 < \Ec$, 
we conclude from (\ref{ftc app}) that
\begin{equation*}
E(\phi_0) =
\frac{1}{2} \int_0^\infty \int_\rr \lvert \psi_s \rvert^2 \dx ds
+
\frac{1}{4} \int_\rr \lvert \psi_x \rvert^2 \dx,
\end{equation*}
which, in view of (\ref{L norm def}), proves the lemma.
\end{proof}

\begin{thm}(Energy space).
Let $p:= \phi_0(\infty) \in \cM$ be fixed.  Then there exists a complete metric space
$\ES_p$ and a continuous map 
$\jmath : \cS_{p} \to \ES_{p}$ with the following properties:
\begin{enumerate}
\item[(i)] $\jmath(\cS_p)$ is dense in $\ES_p$.
\item[(ii)] $\jmath$ is injective.
\item[(iii)] Translation and dilation on $\cS_p$ extend to continuous isometric actions on $\ES_p$.
\item[(iv)] The energy functional $E : S_p \to \mathbf{R}^+$ extends to a continuous
functional $E : \ES_p \to \mathbf{R}^+$.
\item[(v)] If $\Phi \in \ES_p$ has energy $E(\Phi) = 0$, then $\Phi = \jmath(p)$.
\end{enumerate}
\label{Energy Space}
\end{thm}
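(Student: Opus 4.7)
The overall strategy is to leverage the energy identity of the preceding lemma and the rigid structure of the caloric gauge construction to verify each property in turn. Parts (i), (iv), and (v) come essentially for free. Property (i) holds by the very definition of $\ES_p$ as the closure of $\jmath(\cS_p)$. For (iv), the preceding lemma identifies $E$ with $d_{SO(m)\backslash\cL}(\cdot, 0)$ on the dense subset $\jmath(\cS_p)$, so one defines $E(\Phi) := d_{SO(m)\backslash\cL}(\Phi, 0)$ on all of $\ES_p$, and continuity is automatic from the Lipschitz property of a distance function. Part (v) follows immediately: if $E(\Phi) = 0$ then $\Phi$ is the zero class in $SO(m) \backslash \cL$, and the constant map $\phi_0 \equiv p$ has a trivial heat flow extension with $\psi_s \equiv 0$ and $\psi_x \equiv 0$, whence $\jmath(p) = 0$ and $\Phi = \jmath(p)$. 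Continuity of $\jmath$ itself, with $\cS_p$ given its natural Schwartz topology, follows from continuous dependence of the heat flow extension on initial data (Theorem \ref{Main Energy Estimates}) together with continuous dependence of the caloric gauge on the extended flow (Theorem \ref{CalGauge EU}).

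For (ii), the plan is to explicitly invert $\jmath$ on its image by unwinding the caloric gauge. Given a representative $(\psi_s, \psi_x(0, \cdot))$ of $\jmath(\phi_0) \in SO(m)\backslash\cL$ together with a choice of boundary frame $e(\infty) \in \mathrm{Frame}(T_p\cM)$, one reconstructs the full quadruple $(\phi, e, A_x, \psi_x)$ on $\mathbf{R}^+ \times \rr$ by integrating the coupled evolution system $\partial_s \phi = e\psi_s$, $(\phi^*\nabla)_s e = 0$, $\partial_s A_x = F_{sx}$, $\partial_s \psi_x = D_x \psi_s$ of Lemma \ref{EquationsOfMotion} and (\ref{Frame evolution}) ``backward'' in $s$, using the boundary data $\phi(\infty, \cdot) = p$, $e(\infty, \cdot) = e(\infty)$, $A_x(\infty) = \psi_x(\infty) = 0$ established in the proof of Theorem \ref{CalGauge EU}. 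A uniqueness argument for this system, made quantitative by the decay bounds of Theorem \ref{Main Energy Estimates} and the connection bounds of \S \ref{S:Caloric gauge}, yields $\phi_0 = \phi_0'$ whenever $(\psi_s, \psi_x(0)) = (\psi_s', \psi_x'(0))$; the $SO(m)$ quotient accounts precisely for the remaining freedom in choosing $e(\infty)$.

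For (iii), the translation $\phi_0(\cdot) \mapsto \phi_0(\cdot - x_0)$ and the dilation $\phi_0(\cdot) \mapsto \phi_0(\cdot/\lambda)$ commute with (\ref{heatflowequation}) and with the caloric gauge construction, inducing on the derivative fields the maps $\psi_s(s, x) \mapsto \lambda^{-2} \psi_s(s/\lambda^2, x/\lambda)$ and $\psi_x(s, x) \mapsto \lambda^{-1} \psi_x(s/\lambda^2, x/\lambda)$ (together with the obvious translation by $x_0$). A direct change of variables shows that the Jacobian factors $\lambda^4$ for $\dx\,ds$ and $\lambda^2$ for $\dx$ exactly cancel the weights built into the $\cL$-norm, so these maps are isometries of $\cL$ and therefore of $SO(m) \backslash \cL$; by density they extend uniquely and isometrically to continuous actions on $\ES_p$.

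The main obstacle is (ii). While the idea of inverting the caloric gauge construction is natural, making the ``backward in $s$'' reconstruction rigorous is delicate: the system is nonlinear and strongly coupled through curvature and parallel transport, and because the data is prescribed at $s = \infty$ rather than at $s = 0$ one cannot invoke initial-value Picard theory directly. The quantitative decay estimates for $\psi_s$, $\psi_x$, and $A_x$ from \S \ref{S:Caloric gauge} should however be sufficient to close a contraction argument in a suitable weighted space anchored at $s = \infty$.
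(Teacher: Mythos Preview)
Your proposal is correct and follows essentially the same approach as the paper. The paper's treatment of (ii) is in fact simpler than you anticipate: it uses only the pair of ODEs $\partial_s \phi = e\psi_s$ and $(\phi^*\nabla)_s e = 0$ with boundary data $(\phi(\infty), e(\infty)) = (p, e(\infty))$, invoking the integrability bound (\ref{time deriV bound}) to justify uniqueness backward from $s=\infty$; the additional evolutions for $A_x$ and $\psi_x$ you propose are redundant since these are determined by $(\phi, e)$, so no contraction argument in a weighted space is needed---standard ODE uniqueness suffices.
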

\begin{proof}
Property (i) follows from construction.

To show (ii), suppose we have classical data $\phi_0, \tilde{\phi}_0 \in \cS_p$
with $\jmath (\phi_0) = \jmath (\tilde{\phi}_0)$.  Then there exist caloric gauges
$e, \tilde{e}$ with respect to which $\psi_s = \tilde{\psi}_s$ and $\psi_x = \tilde{\psi}_x$.
By construction $\phi_0(\infty) = p = \tilde{\phi}_0(\infty)$.  
If need be we apply a rotation
in $SO(m)$ so that $e(\infty) = \tilde{e}(\infty)$.
From (\ref{DF Def}) and (\ref{Frame evolution}) respectively we have that
\begin{equation*}
\partial_s \phi_0 = e \psi_s
\quad \text{and} \quad
(\phi_0 \nabla)_s e = 0,
\end{equation*}
and hence a system of ODEs from which we may recover $\phi_0$ and $e$ from the
boundary data $\phi_0(\infty)$ and $e(\infty)$, using (\ref{time deriV bound}) 
to justify integrability.

Property (iii) is an easy consequence of (i).

Property (iv) is also straightforward.  We note, however, that as in \cite{T4} one may develop
an energy space whose norm $\cL$ is given by $\int_0^\infty \int_\rr \lvert \psi_s \rvert^2 \dx ds$
rather than the one we adopted in (\ref{L norm def}).  In such a case property (iv)
requires verification and is not trivial.  We do not pursue this approach here.

Property (v) follows from (i) and (iv).

\end{proof}

\bibliography{CGGR}
\bibliographystyle{amsplain}

\end{document}